\DeclareMathOperator{\im}{im}
\DeclareMathOperator{\Hom}{Hom}
\providecommand{\id}{\ensuremath\mathrm{id}}
\DeclareMathOperator{\Aut}{Aut}
\DeclareMathOperator{\Inn}{Inn}
\DeclareMathOperator{\Out}{Out}
\DeclareMathOperator{\GL}{GL}
\DeclareMathOperator{\SL}{SL}
\DeclareMathOperator{\IA}{IA}
\DeclareMathOperator{\Mod}{Mod}
\DeclareMathOperator{\Homeo}{Homeo}
\providecommand{\I}{\ensuremath\mathcal{I}}
\DeclareMathOperator{\Sp}{Sp}
\DeclareMathOperator{\modGL}{modGL}
\DeclareMathOperator{\modSp}{modSp}
\DeclareMathOperator{\Ind}{Ind}
\DeclareMathOperator{\Res}{Res}
\DeclareMathOperator{\gr}{gr}
\DeclareMathOperator{\tr}{tr}
\providecommand{\FI}{\ensuremath\mathsf{FI}}
\providecommand{\VI}{\ensuremath\mathsf{VI}}
\providecommand{\VIC}{\ensuremath\mathsf{VIC}}
\providecommand{\SI}{\ensuremath\mathsf{SI}}
\providecommand{\C}{\ensuremath\mathcal{C}}
\providecommand{\xmod}[1]{\ensuremath{#1\mathsf{-mod}}}
\providecommand{\hooklongrightarrow}{\lhook\joinrel\longrightarrow}
\providecommand{\twoheadlongrightarrow}{\relbar\joinrel\twoheadrightarrow}
\providecommand{\inject}{\hooklongrightarrow}
\providecommand{\surject}{\twoheadlongrightarrow}
\providecommand{\quot}[2]{{\raisebox{.2em}{$#1\!$}\left/\raisebox{-.2em}{$#2$}\right.}}
\newcommand{\doublequot}[3]{{\left.\raisebox{-.2em}{$#1\!$}\right\backslash\raisebox{.2em}{$\!#2\!$}\left/\raisebox{-.2em}{$\!#3$}\right.}}
\newcommand{\tens}[1][]{\mathbin{\mathop{\otimes}\displaylimits_{#1}}}
\providecommand{\NN}{\ensuremath\mathbb N}
\providecommand{\ZZ}{\ensuremath\mathbb Z}
\providecommand{\QQ}{\ensuremath\mathbb Q}
\providecommand{\RR}{\ensuremath\mathbb R}
\definecolor{grey}{gray}{.5}
\numberwithin{thmcounter}{section}
\newaliascnt{thmauto}{thmcounter}
\newaliascnt{Defauto}{thmcounter}
\newaliascnt{exauto}{thmcounter}
\newaliascnt{lemauto}{thmcounter}
\newaliascnt{propauto}{thmcounter}
\newaliascnt{corauto}{thmcounter}
\newaliascnt{remauto}{thmcounter}
\theoremstyle{plain}
\newtheorem{thm}[thmauto]{Theorem}
\newtheorem{Def}[Defauto]{Definition}
\newtheorem{ex}[exauto]{Example}
\newtheorem{lem}[lemauto]{Lemma}
\newtheorem{prop}[propauto]{Proposition}
\newtheorem{cor}[corauto]{Corollary}
\newtheorem{rem}[remauto]{Remark}
\newtheorem{thmA}{Theorem}
\newtheorem*{rem*}{Remark}
\newtheorem*{thm*}{Theorem}
\numberwithin{equation}{section}
\let\originalleft\left
\let\originalright\right
\renewcommand{\left}{\mathopen{}\mathclose\bgroup\originalleft}
\renewcommand{\right}{\aftergroup\egroup\originalright}
\title{Representation stability for filtrations of Torelli groups}
\author{Peter Patzt}
\address{Institut f\"ur Mathematik, Freie Universit\"at Berlin, Germany}
\email{peter.patzt@fu-berlin.de}
\date{April 2017}
\subjclass[2010]{20G05 (Primary), 20E36, 20F40, 58D05 (Secondary)}
\begin{document}

\maketitle 

\begin{abstract}
We 
show, finitely generated rational $\VIC_\QQ$--modules and $\SI_\QQ$--modules are uniformly representation stable and all their submodules are finitely generated. We use this to prove two conjectures of Church and Farb, which state that the quotients of the lower central series of the Torelli subgroups of $\Aut(F_n)$ and $\Mod(\Sigma_{g,1})$ are uniformly representation stable as sequences of representations of the general linear groups and the symplectic groups, respectively. Furthermore we prove an analogous statement for their Johnson filtrations.
\end{abstract}

%


\section{Introduction}

%

Church and Farb \cite{CF} define the notion of representation stability for sequences of representations of the symmetric groups $\mathfrak S_n$, the hyperoctahedral groups $\ZZ\rtimes \mathfrak S_n$, the general linear groups $\GL_n\QQ$, the special linear groups $\SL_n\QQ$ and the symplectic groups $\Sp_{2n}\QQ$. Especially representation stability for the symmetric groups has been the focus of a lot of research lately. It has been intimately connected to functors from the category of finite sets and injections $\FI$ to vector spaces by Church, Ellenberg and Farb \cite{CEF}. Wilson \cite{Wi} developed a similar connection for the hyperoctahedral groups. In both cases an amplitude of sequences were proved to be representation stable.

\subsection*{Representation stability over the symmetric groups.}
The representation theory of the symmetric group $\mathfrak S_n$ over the rationals $\QQ$ is known to be semisimple. The irreducible representations are indexed by partitions $\lambda=(\lambda_1\ge\lambda_2\ge \dots)$ of $n= \lambda_1+ \lambda_2 + \dots$, which we denote by
\[ \mathfrak S_n(\lambda).\]
Let 
\[ V_0 \stackrel{\phi_0}{\longrightarrow} V_1 \stackrel{\phi_1}{\longrightarrow} V_2 \stackrel{\phi_2}{\longrightarrow} \dots\]
 be a sequence of vector spaces over $\QQ$ together with a linear $\mathfrak S_n$--action on $V_n$ such that $\phi_n$ is $\mathfrak S_n$--equivariant. Such a sequence is called \emph{consistent} by Church--Farb \cite{CF} and can easily be generalized to the other groups mentioned in the first paragraph. 
They \cite[Def 2.3]{CF} call a consistent sequence of representations of the symmetric groups \emph{representation stable} if the following conditions are satisfied:
\begin{description}
\item[Injectivity] The map $\phi_n\colon V_n \to V_{n+1}$ is injective for all large enough $n\in\mathbb N$.
\item[Surjectivity] The induced map $\Ind_{\mathfrak S_n}^{\mathfrak S_{n+1}} \phi_n\colon \Ind_{\mathfrak S_n}^{\mathfrak S_{n+1}}V_n \to V_{n+1}$ is surjective for all large enough $n\in \mathbb N$.
\item[Multiplicity stability] We can write
\[ V_n \cong \bigoplus_{\lambda} \mathfrak S_n(\lambda)^{\oplus c_{\tilde\lambda,n}}\]
where $\tilde\lambda = (\lambda_2\ge\lambda_3\ge \dots)$ for $\lambda = (\lambda_1\ge\lambda_2\ge \lambda_3\ge \dots)$ and $c_{\tilde\lambda,n}$ is independent of $n$ for all large enough $n\in \mathbb N$.
\end{description}
A consistent sequence is called \emph{uniformly} representation stable if the multiplicities $c_{\tilde\lambda,n}$ stabilize uniformly.

Functors from a category $\C$ to the category $\xmod\QQ$ of vector spaces over $\QQ$ are called \emph{$\C$--modules}. Every $\FI$--module $V\colon \FI\to \xmod\QQ$ gives rise to a consistent sequence, by taking
\[ V_n = V( \{ 1, \dots, n\})\]
and
\[ \phi_n = V( \{ 1, \dots, n\} \to \{ 1, \dots, n+1\}).\]
The connection to representation stability was provided by Church--Ellenberg--Farb in the following theorem.

\begin{thm*}[Church--Ellenberg--Farb {\cite[Thm 1.13]{CEF}}]
An $\FI$--module $V$ is finitely generated if and only if its consistent sequence is uniformly representation stable and $V_n$ is finite dimensional for all $n\in\NN$.
\end{thm*}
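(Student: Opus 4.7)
The plan is to prove both directions, with the forward implication supplying the substantive content.

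For the backward direction, suppose the consistent sequence is uniformly representation stable with each $V_n$ finite-dimensional, and let $N$ lie past the stable range. For $n > N$, the surjectivity condition says that $V_n$ is spanned as an $\mathfrak S_n$-representation by $\phi_{n-1}(V_{n-1})$; equivalently, $V_n$ is generated as an $\FI$-module from $V_{n-1}$ using the injections $\{1,\ldots,n-1\}\hookrightarrow\{1,\ldots,n\}$. Iterating, a basis of $V_N$ together with bases of $V_0,\ldots,V_{N-1}$ is a finite generating set for the $\FI$-module $V$.

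For the forward direction, first analyze the ``free'' $\FI$-modules $M(m)$ defined by $M(m)(S)=\QQ[\Hom_{\FI}(\{1,\ldots,m\},S)]$, which represent the evaluation functor $V\mapsto V_m$. As an $\mathfrak S_n$-representation, $M(m)_n \cong \Ind_{\mathfrak S_{n-m}}^{\mathfrak S_n}\QQ$, whose decomposition into irreducibles is given by Pieri's rule and is manifestly uniformly multiplicity-stable with finite-dimensional pieces: only partitions $\lambda\vdash n$ with $\lambda_1\geq n-m$ appear, and their multiplicities depend only on the tail $\tilde\lambda$ once $n$ is large. Consequently, any finite direct sum $M=\bigoplus_i M(m_i)$ is uniformly representation stable. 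Since every finitely generated $\FI$-module $V$ is a quotient of such an $M$ via a surjection $\pi\colon M\twoheadrightarrow V$, the remaining task is to push uniform rep stability from $M$ to $V$.

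This last step is the main obstacle: quotients of rep stable sequences are not rep stable in general, because injectivity of $\phi_n$ can fail. The crux is to establish the \emph{Noetherian} property of rational $\FI$-modules, namely that every $\FI$-submodule of a finitely generated $\FI$-module is itself finitely generated. A natural approach is a Gröbner-style argument on the poset of injections (ordered by their image sets), combined with a Dickson-type finiteness lemma. Given Noetherianity, $W=\ker(\pi)$ is finitely generated, hence uniformly representation stable by the case already handled; multiplicity stability of $V_n=M_n/W_n$ then follows from subtraction of the stable decompositions of $W_n$ from those of $M_n$, surjectivity and finite-dimensionality are inherited directly from $M$, and injectivity of $\phi_n\colon V_n\to V_{n+1}$ for large $n$ follows by comparing the stabilized decompositions of $V_n$ and $V_{n+1}$ with the surjection $\Ind_{\mathfrak S_n}^{\mathfrak S_{n+1}}V_n \twoheadrightarrow V_{n+1}$ and Pieri's rule for adding a box.
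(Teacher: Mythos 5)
Note first that the paper does not prove this statement; it quotes it from Church--Ellenberg--Farb, and its own analogues are \hyperref[thmA:VICrepstab]{Theorems \ref{thmA:VICrepstab}} and \ref{thmA:SIrepstab}, proved via the coinvariants functor $\tau_{n,a}$, surjectivity degree, and Noetherianity. Measured against that strategy (which is essentially CEF's), your backward direction and your analysis of the free modules $M(m)$ are correct: $M(m)_n\cong\Ind_{\mathfrak S_{n-m}}^{\mathfrak S_n}\QQ$, only partitions with $\lambda_1\ge n-m$ occur, and the multiplicities stabilize.

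The genuine gap is the last step of the forward direction. Having written $V=M/W$ with $M$ a finite sum of $M(m_i)$'s and $W=\ker\pi$ finitely generated by Noetherianity, you assert that $W$ is uniformly representation stable ``by the case already handled.'' But the case handled is only that of free modules; $W$ is finitely generated, not free, so this appeal is precisely the implication being proved, and iterating ($W$ is itself a quotient of a free module by a finitely generated kernel, etc.) gives no terminating induction. The actual proofs break this circle with a quantitative argument that never needs representation stability of the kernel: finite generation in ranks $\le m$ bounds which partitions can occur, the surjectivity of the coinvariants maps for $M(m)$ passes to the quotient $V$ by right-exactness of coinvariants, and then the multiplicity of a fixed $\lambda$ in $V_n$, read off from $\tau_{n,m}V_n$ via the branching rules (the analogue of \autoref{prop:GLtau} and \autoref{prop:Sptau}), is eventually non-increasing and hence constant --- this is exactly how \autoref{thm:fg implies repstable} is proved here. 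Relatedly, your injectivity step (``comparing stabilized decompositions with Pieri's rule'') is not a proof: equal stable multiplicities together with surjectivity of $\Ind_{\mathfrak S_n}^{\mathfrak S_{n+1}}V_n\surject V_{n+1}$ do not formally force $\phi_n$ to be injective. The standard fix is available with the Noetherianity you already invoke: $\ker\phi$ is an $\FI$--submodule, hence finitely generated, hence generated in degrees $\le N$; since every injection out of a set of size $\le N$ into a strictly larger set factors through the standard inclusion, all such generators map to zero in higher degrees, so $\ker\phi_n=0$ for $n>N$. (Your Gr\"obner/Dickson route to Noetherianity is a legitimate, more general alternative to CEF's characteristic-zero argument, but it is only a pointer; since that theorem is itself quoted in the paper, citing it is the intended move --- the substantive missing piece is the non-circular derivation of multiplicity stability.)
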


This theorem depends on the following noetherian property of $\FI$--modules. 

\begin{thm*}[Church--Ellenberg--Farb {\cite[Thm 1.3]{CEF}}]
Every submodule of a finitely generated $\FI$--module is finitely generated.
\end{thm*}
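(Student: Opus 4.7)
My plan is to reduce the problem to noetherianity of the representable $\FI$-modules. Since $V$ is finitely generated, a choice of generators yields a surjection $\bigoplus_{i=1}^{k} M(m_i) \twoheadrightarrow V$, where $M(m) = \QQ[\Hom_\FI(\{1,\dots,m\},-)]$ denotes the representable $\FI$-module on $m$ letters. Because quotients and finite direct sums of noetherian objects in the abelian category of $\FI$-modules are noetherian, it suffices to prove that each $M(m)$ is noetherian.

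I would prove this by induction on $m$. The base case $m = 0$ is immediate: $M(0)$ is the constant functor $\QQ$, whose only sub-$\FI$-modules are $0$ and $M(0)$. For the inductive step, I would use the shift functor $\Sigma$ defined by $(\Sigma V)(S) = V(S \sqcup \{*\})$ together with its derivative $\Delta V = \coker(V \to \Sigma V)$. A direct enumeration of injections into $S \sqcup \{*\}$, split according to whether $*$ lies in the image, yields the decomposition $\Sigma M(m) \cong M(m) \oplus M(m-1)^{\oplus m}$, so $\Delta M(m) \cong M(m-1)^{\oplus m}$ is noetherian by the inductive hypothesis.

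Given any sub-$\FI$-module $W \subseteq M(m)$, applying the (essentially exact) functor $\Delta$ gives $\Delta W \subseteq \Delta M(m)$, which is therefore finitely generated. To conclude that $W$ itself is finitely generated, I would exploit the semisimplicity of $\QQ[\mathfrak S_n]$: each $W(\{1,\dots,n\})$ splits as a direct sum of Specht modules $\mathfrak S_n(\lambda)$, and the $\FI$-structure maps together with the finite generation of $\Delta W$ force all but finitely many of these Specht components at stage $n$ to be generated by lower-degree elements. An induction on $n$ using the branching rules for the symmetric groups then bounds the total number of ``new'' generators required to produce $W$.

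The main obstacle is precisely this last step: producing a uniform bound on the new Specht components that appear in $W$ at each level $n$ from the finite generation of $\Delta W$. This requires a careful quantitative analysis of how isotypic components propagate under the $\FI$-structure maps $W(\{1,\dots,n\}) \to W(\{1,\dots,n+1\})$, together with a comparison between the stabilizing chain $W_{\le 0} \subseteq W_{\le 1} \subseteq \dots$ of sub-$\FI$-modules generated in bounded degree and the corresponding chain for $\Delta W$. Characteristic zero is indispensable here, since the semisimplicity of $\QQ[\mathfrak S_n]$ is what permits a clean Specht-by-Specht analysis; in positive characteristic a genuinely different argument would be needed.
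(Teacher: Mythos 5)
Your reduction to noetherianity of the representable modules $M(m)$ and the decomposition $\Sigma M(m)\cong M(m)\oplus M(m-1)^{\oplus m}$ are both fine, but the argument stops exactly where the content of the theorem lies. Two concrete problems: (i) the claim ``$\Delta W\subseteq \Delta M(m)$'' is not justified, because $\Delta$ is only right exact --- for a submodule $W\subseteq M(m)$ an element of $\Sigma W$ may lie in the image of $M(m)\to\Sigma M(m)$ without lying in the image of $W\to\Sigma W$, so the induced map $\Delta W\to\Delta M(m)$ need not be injective; one must instead work with the image of $\Sigma W$ in $M(m-1)^{\oplus m}$ (or with $\Sigma W$ itself), which changes what the inductive hypothesis actually gives you. (ii) Far more seriously, the passage from finite generation of $\Delta W$ (or of the relevant image) to finite generation of $W$ --- equivalently, since each $W_n$ is finite dimensional, to generation of $W$ in bounded degree --- is precisely the step you label ``the main obstacle'' and do not carry out. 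The assertion that the $\FI$--structure maps ``force all but finitely many Specht components to be generated by lower-degree elements'' is the theorem itself, not a consequence of anything established earlier in your sketch; no bound on the generation degree of $W$ is produced. As written this is a plan for a proof (essentially the shift-functor strategy of Church--Ellenberg--Farb--Nagpal), not a proof.

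For comparison: the present paper does not prove this statement at all --- it is quoted from Church--Ellenberg--Farb --- and the noetherianity results the paper does prove (\hyperref[thmA:VICnoeth]{Theorems \ref{thmA:VICnoeth}} and \ref{thmA:SInoeth}, for rational $\VIC_\QQ$-- and $\SI_\QQ$--modules) use a genuinely different mechanism: one applies the coinvariants functor $\tau_{n,a}$ to assemble the graded $\QQ[T]$--module $\Phi_a V$, invokes noetherianity of the polynomial ring $\QQ[T]$ to obtain finitely many homogeneous generators of $\Phi_a W$, lifts them to $W$, and then uses the branching rules (\autoref{cor:branchingGL} and \autoref{cor:branchingSp}) to show these lifts generate $W$. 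If you want to complete your route you would need the quantitative induction of the CEF/CEFN argument that bounds the generation degree of a submodule of $M(m)$; alternatively, adapting the paper's $\Phi_a$/$\QQ[T]$ argument to $\FI$ (with symmetric-group branching in place of the rational branching rules) gives a cleaner path in characteristic zero.
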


Analogous theorems for the hyperoctahedral groups were proved by Wilson \cite[Thm 4.21 + Thm 4.22]{Wi}.


\subsection*{Representation stability over the general linear groups and symplectic groups.}
The rational representation theory for both $\GL_n\QQ$ and $\Sp_{2n}\QQ$ is semisimple and the irreducibles are indexed by pairs of partitions $(\lambda^+, \lambda^-)$ such that the lengths $\ell(\lambda^+) + \ell(\lambda^-)\le n$ and by partitions $\lambda$ whose length $\ell(\lambda) \le n$, respectively. We respectively denote these irreducibles by
\[ \GL_n(\lambda^+,\lambda^-)\quad\text{and}\quad \Sp_{2n}(\lambda).\]
For a consistent sequence of rational representations of the general linear groups or the symplectic groups Church--Farb \cite[Def 2.3]{CF} define (uniform) representation stability analogously to the symmetric groups---only the analogue of the multiplicity stability condition is easier to state:
\begin{description}
\item[Multiplicity stability for general linear groups] We can write
\[ V_n \cong \bigoplus_{\lambda^+,\lambda^-} \GL_n(\lambda^+,\lambda^-)^{\oplus c_{(\lambda^+,\lambda^-),n}}\]
and $c_{(\lambda^+,\lambda^-),n}$ is independent of $n$ for all large enough $n\in \mathbb N$.
\item[Multiplicity stability for symplectic groups] We can write
\[ V_n \cong \bigoplus_{\lambda} \Sp_{2n}(\lambda)^{\oplus c_{\lambda,n}}\]
and $c_{\lambda,n}$ is independent of $n$ for all large enough $n\in \mathbb N$.
\end{description}

The question of the analogue of $\FI$ for the general linear groups could be naively answered with $\VI$---the category of finite dimensional vector spaces and injective homomorphisms. But this is not correct, it turns out that the correct analogue is $\VIC$---the category of finite dimensional  vector spaces and injective homomorphisms together with a choice of a complement of the image (see \autoref{Def:VIC}). For the symplectic groups we use $\SI$---the category of finite dimensional  symplectic vector spaces and isometries. This works well, because isometries are always injective and come with a canonical complement. A version of $\VIC$ and $\SI$ for finite rings has already been used by Putman--Sam \cite{PS}. 

Every $\VIC$--module $V\colon \VIC\to \xmod\QQ$ gives rise to a consistent sequence, by taking
\[ V_n = V(\QQ^n)\]
and
\[ \phi_n = V( \QQ^n \to \QQ^{n+1}).\]
Similarly for every $\SI$--module $V\colon \SI \to \xmod\QQ$ the sequence given by
\[ V_n = V(\QQ^{2n})\]
and
\[ \phi_n = V( \QQ^{2n} \to \QQ^{2n+2})\]
is consistent. We call $V$ \emph{rational} if $V_n$ is a rational representation for every $n\in\NN$.

Our main technical results are the following theorems.

\begin{thmA}\label{thmA:VICrepstab}
A rational $\VIC$--module $V$ is finitely generated if and only if its consistent sequence is uniformly representation stable and $V_n$ is finite dimensional for all $n\in\NN$.
\end{thmA}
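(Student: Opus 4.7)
The plan is to adapt to $\VIC$ the strategy used by Church--Ellenberg--Farb \cite{CEF} for $\FI$-modules, replacing the branching rules for the symmetric groups with those for the general linear groups. The key inputs are a suitable notion of a free rational $\VIC$-module, the classical stability of induced $\GL_n$-representations under the Littlewood--Richardson rule, and the companion noetherian theorem for $\VIC$-modules proved elsewhere in the paper.

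I would begin by identifying the free rational $\VIC$-modules. Using the identification $\VIC(\QQ^k,\QQ^n)\cong \GL_n/\GL_{n-k}$ (obtained by fixing the standard inclusion together with the standard complement), the universal rational $\VIC$-module $M(W)$ generated in degree $k$ by a finite-dimensional rational $\GL_k$-representation $W$ is the rational parabolic induction to $\GL_n$ of $W\boxtimes \mathbf 1$ from $\GL_k\times \GL_{n-k}$. Every finitely generated rational $\VIC$-module is a quotient of a finite direct sum $\bigoplus_{i=1}^r M(W_i)$, so it suffices to prove uniform representation stability for each $M(W)$ and then pass to quotients. Decomposing $W$ into $\GL_k$-irreducibles $\GL_k(\lambda^+,\lambda^-)$ and applying the Pieri / Littlewood--Richardson rules, the multiplicity of $\GL_n(\mu^+,\mu^-)$ in $M(W)_n$ is a Littlewood--Richardson coefficient that becomes independent of $n$ once $n$ is larger than a bound depending on $k$ and $\ell(\mu^\pm)$; injectivity and surjectivity of the transition maps follow from the induction formula. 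Finite direct sums preserve uniform representation stability.

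To pass to a quotient $V$, I would invoke the noetherian theorem for $\VIC$-modules proved elsewhere in the paper, so that the kernel of $\bigoplus_i M(W_i)\twoheadrightarrow V$ is itself finitely generated; an induction on a suitable weight or generation-degree invariant then shows that the kernel is uniformly representation stable too, and subtracting kernel multiplicities from those of the free cover gives multiplicity stability for $V$, with injectivity and surjectivity for $V$ inherited from the free cover. For the converse direction, if $V$ is uniformly representation stable with finite-dimensional terms, the surjectivity axiom produces some $N_0$ such that the $\VIC$-morphisms from $V_n$ generate $V_{n+1}$ for all $n\ge N_0$; iterating shows that $V$ is generated by the finite-dimensional subspace $\bigoplus_{n\le N_0}V_n$, so $V$ is finitely generated.

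I expect the principal obstacle to be the passage from uniform representation stability of the free modules to that of an arbitrary finitely generated quotient. Multiplicity stability is not preserved under quotients of representation-stable sequences in general, so the noetherian theorem plays an essential role: it provides finite generation of the kernel of a free cover, which in turn enables the inductive argument on weight or generation degree. The technical heart of the proof is therefore the interplay between the Littlewood--Richardson calculation for the free $\VIC$-modules and the noetherian property of $\VIC$-modules.
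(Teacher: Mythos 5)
Your converse direction (surjectivity of the induced maps plus finite dimensionality forces generation in finite rank, hence finite generation) is fine and matches the paper. The problems are in the forward direction, and they sit exactly at the step you yourself flag as the obstacle. First, your free objects are not pinned down: the ``rational parabolic induction'' $M(W)_n=\mathrm{Ind}_{\GL_k\times\GL_{n-k}}^{\GL_n}(W\otimes\mathbf 1)$ has infinite-dimensional graded pieces containing infinitely many distinct irreducibles $\GL_n(\mu^+,\mu^-)$ (all $\mu^\pm$ obtained from $\lambda^\pm$ by adding a common partition $\delta$), and you have not constructed the $\VIC$--functoriality, verified a universal property, or shown that every finitely generated rational $\VIC_\QQ$--module admits a surjection from finitely many such objects. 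Second, and more seriously, the passage from $M(W)$ to a finitely generated quotient is asserted, not proved: ``an induction on a suitable weight or generation-degree invariant'' is precisely the missing content, since multiplicity stability is not inherited by quotients and the kernel of a free cover, though finitely generated by the noetherian theorem, is again an arbitrary finitely generated module, so the induction has no identified terminating invariant. Two further claims are wrong or unaddressed as stated: injectivity of $\phi_n$ on $V$ is \emph{not} inherited from a free cover (the paper gets it by applying the noetherian theorem to the submodule $\ker\phi\subset V$, which is then eventually zero), and uniformity --- a single $N$ working for \emph{all} partitions --- does not follow from per-partition stabilization plus finite dimensionality, because new constituents could keep appearing as $n$ grows.

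For comparison, the paper's proof never decomposes free rational modules and never subtracts kernel multiplicities. It works directly with a finitely generated $V$: the coinvariants functor $\tau_{n,a}(-)=\QQ\otimes_{\QQ G_{n-a}}\Res^{G_n}_{G_a\times G_{n-a}}(-)$ together with the branching corollaries shows that $[\tau_{n,m}(V/W)_n,\GL_m(\mu^+,\mu^-)]$ computes the multiplicity $c_{(\mu^+,\mu^-),n}$ after truncating away constituents of large size (\autoref{lem:minsize}); an explicit double-coset computation gives $M(m)$ surjectivity degree $\le 2m$, hence any module generated in ranks $\le m$ has finite surjectivity degree, so these multiplicities are eventually non-increasing and stabilize by well-ordering of the cardinals (\autoref{thm:fg implies repstable}). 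Uniformity then comes from the length bound $\ell(\lambda^+)+\ell(\lambda^-)\le 2m$ (\autoref{lem:fglength}) and the observation that, by the surjectivity degree, every constituent occurring in any $V_n$ with $n\ge 4m$ already occurs in $\tau_{4m,2m}V_{4m}$. If you want to salvage your plan, you would in effect have to reprove these inputs (a bound on the lengths of constituents, a finite surjectivity-degree statement, and an argument that multiplicities eventually cannot grow), at which point the free-module decomposition and the subtraction become unnecessary.
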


\begin{thmA}\label{thmA:SIrepstab}
A rational $\SI$--module $V$ is finitely generated if and only if its consistent sequence is uniformly representation stable and $V_n$ is finite dimensional for all $n\in\NN$.
\end{thmA}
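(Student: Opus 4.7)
The plan is to follow the blueprint that Church–Ellenberg–Farb established for $\FI$-modules, adapted to exploit the canonical symplectic complement built into the morphisms of $\SI$. Throughout I will rely on two pillars: a good class of ``free'' rational $\SI$-modules whose stability can be verified by hand, and the noetherian property that every submodule of a finitely generated rational $\SI$-module is again finitely generated (the second claim from the abstract), which I presume is established separately in the paper.

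For the ``if'' direction, suppose $V$ is a consistent sequence that is uniformly representation stable with each $V_n$ finite-dimensional. Then only finitely many isotypes appear across the stable components, and $V_N$ is finite-dimensional at the start $N$ of the stable range. I would fix a finite spanning set of $V_N$ and check, using the surjectivity axiom rephrased through induction along $\Sp_{2n}\QQ \hookrightarrow \Sp_{2(n+1)}\QQ$, that the $\SI$-submodule of $V$ these elements generate is all of $V$.

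For the ``only if'' direction, introduce principal rational $\SI$-modules indexed by rational $\Sp_{2m}\QQ$-representations $W$ by setting
\[ M(W)(n) = \Ind_{\Sp_{2m}\QQ \times \Sp_{2(n-m)}\QQ}^{\Sp_{2n}\QQ}\bigl(W \boxtimes \QQ\bigr). \]
This really is an $\SI$-module, because an isometry $\QQ^{2n} \hookrightarrow \QQ^{2n+2}$ comes equipped with its symplectic orthogonal complement, and the trivial representation extends canonically along that complement. Any finitely generated rational $\SI$-module then admits a surjection from a finite direct sum of such $M(W_i)$. Multiplicity stability for each $M(W)$ should reduce to Littlewood's stable branching rule for $\Sp_{2n} \times \Sp_{2k} \uparrow \Sp_{2(n+k)}$: in the stable range, the decomposition of $M(W)(n)$ into $\Sp_{2n}\QQ$-irreducibles has multiplicities independent of $n$, and the structure maps identify matching isotypic components. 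Granting this, write $V$ as a cokernel $\bigoplus_j M(W_j') \to \bigoplus_i M(W_i) \to V \to 0$; the kernel of the first map and the image are finitely generated by noetherianity, hence representation stable by the same argument, and stability passes to the cokernel by a standard exact-sequence lemma. Finite-dimensionality of $V_n$ is inherited from that of each $M(W_i)(n)$.

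The main obstacle is the noetherian property. Unlike the $\FI$ case, where the combinatorics of finite sets drives the proof, here one must work with algebraic representations of $\Sp_{2n}\QQ$ and isometric embeddings parametrized by infinite symplectic Grassmannians. I would attempt this either by a Gröbner-style argument in the spirit of Sam–Snowden's work on twisted commutative algebras, or by a central-stability approach using the high connectivity of a suitable complex of partial symplectic bases to resolve an arbitrary finitely generated $\SI$-module by principal projectives. The branching-rule step is comparatively routine given the known decompositions of induced symplectic representations.
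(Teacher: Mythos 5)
Your ``if'' direction is fine and matches the paper's: surjectivity of the induced maps is equivalent to generation in finite rank, and finite-dimensionality of the $V_n$ upgrades this to finite generation. The gap is in the ``only if'' direction, and it sits exactly where the $\SI_\QQ$ theory stops imitating $\FI$: the principal modules $M(W)(n)=\Ind_{\Sp_{2m}\QQ\times\Sp_{2(n-m)}\QQ}^{\Sp_{2n}\QQ}(W\boxtimes\QQ)$ do not exist in the form you need. If $\Ind$ means abstract group induction, then $M(W)(n)$ is an infinite-dimensional, non-rational representation of the infinite discrete group $\Sp_{2n}\QQ$; it admits no decomposition into algebraic irreducibles, so ``multiplicity stability of $M(W)$ via the stable branching rule'' is not meaningful, and the claim that finite-dimensionality of $V_n$ is inherited from the $M(W_i)(n)$ is false. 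If instead $\Ind$ means algebraic induction, it is right adjoint to restriction, so finitely generated modules do not admit surjections from finite sums of such objects; moreover the branching coefficient $\sum_{\gamma,\delta}c^{\gamma}_{\mu\nu}c^{\lambda}_{\gamma(2\delta)'}$ with $\nu=\emptyset$ shows that infinitely many distinct $\Sp_{2n}(\lambda)$ would occur. Either way there are no finite-dimensional rational projective generators, and the exact-sequence argument you build on them collapses.

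The paper's actual route avoids decomposing any free object. It takes $M(m)=\QQ[\Hom_{\SI_\QQ}(\QQ^{2m},-)]$, a non-rational permutation module, proves by an explicit double-coset computation that $M(m)$ has surjectivity degree $\le 2m$ for the coinvariants functor $\tau_{n,a}$, and transfers that bound to any quotient $V$ by right-exactness of $\tau$. Multiplicity stability is then proved directly on the rational module $V$: after discarding constituents of too-large size, the branching rules identify $[\tau_{n,m}V_n,\Sp_{2m}(\mu)]$ with the multiplicity $c_{\mu,n}$, and finite surjectivity degree makes $c_{\mu,n}$ eventually nonincreasing, hence eventually constant. Injectivity comes from noetherianity applied to $\ker\phi$, and uniformity from the length bound $\ell(\lambda)\le 2m$ together with the fact that every constituent of $V_n$ for $n\ge 4m$ already appears in $\tau_{4m,2m}V_{4m}$. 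To repair your outline you would need either to work with the genuine representables $M(m)$, accepting that they are not rational and extracting only degree bounds from them, or to find another mechanism for multiplicity stability that operates directly on $V$. Deferring the noetherian property is acceptable since it is a separate theorem of the paper, though its proof there is also via the coinvariants functor (making $\bigoplus_n\tau_{a+n,a}V_{a+n}$ a finitely generated graded $\QQ[T]$-module), not Gr\"obner methods.
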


We also prove the following noetherian condition.

\begin{thmA}\label{thmA:VICnoeth}
Every submodule of a finitely generated rational $\VIC$--module is finitely generated.
\end{thmA}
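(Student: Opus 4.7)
The plan is to deduce Theorem C from Theorem A via a monotonicity property of multiplicities in consistent sequences of $\GL_n$-representations. Let $V$ be a finitely generated rational $\VIC$-module and $W \subseteq V$ a submodule. Theorem A, applied to $V$, ensures that $(V_n)$ is uniformly representation stable and each $V_n$ is finite-dimensional. It will then suffice to verify the same two properties for the subsequence $(W_n)$, for then Theorem A applied to $W$ delivers finite generation of $W$.

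Finite-dimensionality of $W_n$ is immediate from $W_n \subseteq V_n$, and the injectivity condition for $(W_n)$ is inherited from $(V_n)$ since the structure maps of $W$ are restrictions of those of $V$. The content lies in verifying multiplicity stability and the surjectivity of the induced map $\Ind_{\GL_n}^{\GL_{n+1}} W_n \to W_{n+1}$. My main tool will be a monotonicity lemma: in any consistent sequence of finite-dimensional rational $\GL_n$-representations, the multiplicity of each irreducible $\GL_n(\lambda^+,\lambda^-)$ is weakly increasing in $n$. Granted this, the multiplicities in $(W_n)$ are bounded above by those in $(V_n)$, and therefore must stabilize; uniformity carries over because only finitely many pairs $(\lambda^+,\lambda^-)$ contribute to the stable decomposition of $V$, and outside this finite window the multiplicities in $W$ are forced to vanish.

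To establish the monotonicity lemma, I would exploit the defining feature of $\VIC$: every morphism carries a choice of complement, so the standard embedding $\QQ^n \hookrightarrow \QQ^{n+1}$ together with its complement produces canonical splittings that realise $W_n$ as a $\GL_n$-direct summand of $W_{n+1}|_{\GL_n}$. Combining this with the branching rules for $\GL_{n+1} \downarrow \GL_n$, while carefully tracking both the polynomial label $\lambda^+$ and the dual label $\lambda^-$, should yield the required monotonicity bound. Surjectivity of $\Ind_{\GL_n}^{\GL_{n+1}} W_n \to W_{n+1}$ then follows automatically in the stable range, since there is no room for strict growth in $W_{n+1}$ beyond what induction from $W_n$ already produces. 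The main obstacle I anticipate is formulating and proving monotonicity in the bipartite setting: the interaction of $\lambda^+$ with $\lambda^-$ under branching is more delicate than in the symmetric-group case, and controlling the dual part forces genuine use of the complement data in $\VIC$-morphisms rather than merely the underlying injections one would have in $\VI$.
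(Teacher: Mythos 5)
Your plan rests on a ``monotonicity lemma'' that is false, and the way you propose to prove it misreads what the complement datum in a $\VIC$--morphism provides. The complement $C$ in $(f,C)\colon \QQ^n\to\QQ^{n+1}$ is a complement of the image of $\QQ^n$ \emph{inside $\QQ^{n+1}$}; it does not split, nor even make injective, the structure map $\phi_n\colon W_n\to W_{n+1}$ of a $\VIC$--module. Concretely, fix $N$ and let $V_n$ be the trivial one--dimensional representation for $n\le N$ and $V_n=0$ for $n>N$, with every morphism between ranks $\le N$ acting by the identity and all other morphisms by zero: this is a rational $\VIC_\QQ$--module, finitely generated by a single element in rank $0$, and the multiplicity of the trivial representation drops from $1$ to $0$. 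So multiplicities in a consistent sequence, even one coming from a finitely generated rational $\VIC_\QQ$--module (hence in particular from a submodule of one), need not be weakly increasing. Moreover, even when $\phi_n$ happens to be injective, semisimplicity only exhibits $W_n$ as a summand of $\Res^{\GL_{n+1}\QQ}_{\GL_n\QQ}W_{n+1}$, and by the branching rule (\autoref{thm:GLres}) a fixed $\GL_n(\lambda^+,\lambda^-)$ occurs in the restrictions of many different $\GL_{n+1}$--irreducibles, so no lower bound on the multiplicity of the \emph{same} label upstairs follows. With the lemma gone, your final step collapses as well: surjectivity of $\Ind_{\GL_n\QQ}^{\GL_{n+1}\QQ}W_n\to W_{n+1}$, i.e.\ generation of $W$ in finite rank, is precisely the content of the theorem, and ``no room for strict growth'' is not an argument --- bounded multiplicities do not by themselves prevent new generators from appearing in infinitely many ranks.

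There is also a structural problem: you deduce \autoref{thmA:VICnoeth} from \autoref{thmA:VICrepstab}, but in this paper the implication runs the other way. The proof of \autoref{thmA:VICrepstab} uses noetherianity exactly where you would need it least expectedly: the kernel of $\phi$ is a submodule, and its finite generation is what forces it to vanish for large $n$ (the injectivity part of representation stability). So your route is circular unless you supply an independent proof of \autoref{thmA:VICrepstab}, which you do not. The paper's actual argument is different in kind: for $V$ generated in ranks $\le a$ one forms the graded $\QQ[T]$--module $\Phi_aV=\bigoplus_n\tau_{a+n,a}V_{a+n}$ built from the coinvariants functor of \autoref{Def:tau}; the surjectivity--degree computation for the representable modules $M(m)$ (\autoref{prop:stabdegM(m)}) shows $\Phi_aV$ is a finitely generated $\QQ[T]$--module, so the submodule $\Phi_aW$ is finitely generated by noetherianity of $\QQ[T]$; lifting finitely many homogeneous generators to $W$ and using the branching corollaries (\autoref{cor:branchingGL}, \autoref{prop:GLtau}) one shows the submodule they generate is all of $W$. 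Some such use of the coinvariants $\tau_{n,a}$ and the outer branching rules --- rather than single--step restriction along $\phi_n$ --- is what actually controls generation of $W$ in finite rank.
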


\begin{thmA}\label{thmA:SInoeth}
Every submodule of a finitely generated rational $\SI$--module is finitely generated.
\end{thmA}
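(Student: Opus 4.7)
The plan is to deduce \autoref{thmA:SInoeth} from \autoref{thmA:SIrepstab} together with a structural analysis of the standard projective $\SI$-modules. Given a finitely generated rational $\SI$-module $V$ and a submodule $W \subseteq V$, \autoref{thmA:SIrepstab} implies that $V$ is uniformly representation stable and that each $V_n$ is finite-dimensional, so $W_n \subseteq V_n$ is automatically finite-dimensional. Therefore, by the converse direction of \autoref{thmA:SIrepstab}, it suffices to show that the consistent sequence of $W$ is uniformly representation stable.

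Since noetherianity is preserved under quotients and finite direct sums, one reduces to the case where $V$ is one of the standard projective $\SI$-modules of the form $M(\QQ^{2k},U) := \QQ[\SI(\QQ^{2k},-)] \otimes_{\Sp_{2k}(\QQ)} U$, with $U$ a finite-dimensional $\Sp_{2k}(\QQ)$-representation; every finitely generated $\SI$-module is a quotient of a finite direct sum of such $V$'s. For these projective $V$, one analyzes the $\Sp_{2n}(\QQ)$-orbit structure of $\SI(\QQ^{2k},\QQ^{2n})$---equivalently, the set of symplectic $2k$-dimensional subspaces of $\QQ^{2n}$---and uses classical branching rules for the symplectic groups to decompose $V_n$ into stable pieces whose isotypic structure is controlled uniformly in $n$. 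A submodule $W \subseteq V$ is then studied by tracking its intersection with each such piece, using semisimplicity of the rational representations of $\Sp_{2n}(\QQ)$.

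The main obstacle is verifying that the multiplicities of each irreducible $\Sp_{2n}(\lambda)$ in $W_n$ stabilize in $n$ and that the induced map $\Ind_{\Sp_{2n}}^{\Sp_{2n+2}} W_n \to W_{n+1}$ is surjective for large $n$, knowing only the corresponding properties for $V$: multiplicity upper bounds alone do not force stabilization. I expect this step to require a central-stability or polynomial-functor argument, in the spirit of the Putman--Sam treatment of $\SI$-modules over finite rings \cite{PS}, adapted to the rational semisimple setting via classical invariant theory for the symplectic group and a comparison with the already-established noetherian property over $\FI$.
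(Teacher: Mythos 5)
Your proposal does not contain a proof of the key step, and the reduction it rests on is itself problematic. First, the reduction to the ``standard projectives'' $M(\QQ^{2k},U)=\QQ[\Hom_{\SI}(\QQ^{2k},-)]\otimes U$ leaves the rational category: these are enormous permutation-type modules (in rank $n$ they involve $\QQ[\Sp_{2n}\QQ/\Sp_{2n-2k}\QQ]$), which are \emph{not} rational $\SI_\QQ$--modules, so neither \autoref{thmA:SInoeth} nor any of the branching-rule technology for rational representations applies to them. To exploit ``noetherianity passes to quotients'' you would need these projectives themselves to be noetherian, which is a strictly stronger statement than the theorem and is exactly the kind of statement that fails (or at least is inaccessible) over an infinite field --- this is why Putman--Sam work over finite rings; restricting instead to submodules of the projective that contain the kernel of the presentation just returns you to the original problem. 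Second, and decisively, the route through \autoref{thmA:SIrepstab} shifts all of the difficulty onto showing that the submodule $W$ is uniformly representation stable: the surjectivity condition $\Ind_{\Sp_{2n}\QQ}^{\Sp_{2n+2}\QQ}W_n\to W_{n+1}$ for large $n$, together with finite-dimensionality, is essentially the finite generation you are trying to prove. You acknowledge this (``multiplicity upper bounds alone do not force stabilization'') and defer it to an unspecified ``central-stability or polynomial-functor argument''; that deferred step is the entire content of the theorem, so the proposal has a genuine gap rather than a complete alternative proof. There is also a latent circularity to watch: in this paper the forward direction of \autoref{thmA:SIrepstab} (finitely generated $\Rightarrow$ injectivity of the maps for large $n$) is itself deduced from \autoref{thmA:SInoeth}, so a proof of noetherianity may not freely quote the full strength of \autoref{thmA:SIrepstab}.

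For comparison, the paper's proof never goes through representation stability of $W$. Given $V$ generated in ranks $\le a$, it applies the coinvariants functor $\tau_{n,a}$ of \autoref{Def:tau} and forms the graded $\QQ[T]$--module $\Phi_aV=\bigoplus_{n}\tau_{a+n,a}V_{a+n}$, which is finitely generated over $\QQ[T]$ because of the surjectivity-degree bound for the representable modules $M(m)$. Since $\QQ[T]$ is noetherian, the submodule $\Phi_aW$ has finitely many homogeneous generators; lifting these to elements $w_1,\dots,w_r\in W$ and letting $\widetilde W$ be the submodule they generate, one gets $\Phi_a(W/\widetilde W)=0$, and then \autoref{prop:Sptau} (equivalently \autoref{cor:branchingSp}) shows that any irreducible constituent of $V_n$ for $n\ge a$ has nonzero image under $\tau_{n,a}$, forcing $(W/\widetilde W)_n=0$ for $n\ge a$. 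Thus noetherianity is transferred from the polynomial ring $\QQ[T]$, with the branching corollaries doing the representation-theoretic work; this is the mechanism your outline is missing.
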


\begin{rem*}
\begin{enumerate}
\item We may substitute any field of characteristic zero for $\QQ$ and the theorems remain true. 
\item Putman--Sam \cite{PS} proved analogues of \hyperref[thmA:VICnoeth]{Theorems \ref{thmA:VICnoeth}} and \ref{thmA:SInoeth} for finite rings.
\item Gan--Watterlond \cite{GW} proved an analogue of \autoref{thmA:VICrepstab} for finite fields. 
\end{enumerate}
\end{rem*}

\subsection*{Torelli groups.}
Let $F_n$ denote the free group on $n$ generators, then its abelianization is $\ZZ^n$. The quotient map induces an epimorphism on their automorphism groups.
The {Torelli subgroup $\IA_n$} is defined as the kernel, so we get the following short exact sequence.
\[ 1 \to \IA_n \to \Aut(F_n) \to \Aut(\ZZ^n) \cong \GL_n \ZZ \to 1\]

Let $\Sigma_{g,1}$ denote the compact, oriented genus $g$ surface with one boundary component. The mapping class group $\Mod(\Sigma_{g,1})$ is the discrete group $\pi_0 \Homeo^+(\Sigma_{g,1},\partial\Sigma_{g,1})$ of isotopy classes of orientation-preserving homeomorphisms of $\Sigma_{g,1}$ that fix the boundary pointwise. The action of $\Mod(\Sigma_{g,1})$ on $H_1(\Sigma_{g,1};\ZZ)\cong \ZZ^{2g}$ is symplectic, and the {Torelli subgroup} $\I_{g,1}$ is defined to be the kernel of this action. In fact, there is a short exact sequence
\[ 1 \to \I_{g,1} \to \Mod(\Sigma_{g,1}) \to \Sp(H_1(\Sigma_{g,1};\ZZ)) \cong \Sp_{2g}(\ZZ) \to 1.\]

Very little is known about the homology of both Torelli subgroups, except in homological degree $1$. The rational homology is conjectured to be uniformly representation stable in \cite[Conj 6.1, Conj 6.3]{CF}. This problem seems to be too hard to tackle right now, as it is not even known whether the rational homology groups are representations of $\GL_n\QQ$ and $\Sp_{2g}\QQ$. 

Another subject of study deals with central series of the Torelli groups, which include the lower central series $ \gamma \IA_n = \{\gamma_i \IA_n\}_{i\in\NN} $ and $\gamma \I_{g,1}$ and  the Johnson filtration $\alpha \IA_n$ and $\alpha\I_{g,1}$ (see \autoref{sec:N-series} and the beginning of  \autoref{section:I}). The information of these central series are compiled nicely in their graded rational Lie algebra $\gr(\gamma\IA_n)$, $\gr(\gamma\I_{g,1})$, $\gr(\alpha\IA_n)$ and $\gr(\alpha\I_{g,1})$ (see \autoref{Def:gr}). All of these filtrations were considered before, for example by Andreadakis \cite{An}, Hain \cite{Hai}, Habegger--Sorger \cite{HS}, Satoh \cite{Sat,SatSurvey} and are known to be separating, ie
\[ \bigcap_{i\ge 1} \gamma_i\IA_n = \bigcap_{i\ge 1} \gamma_i\I_{g,1} = \bigcap_{i\ge 1} \alpha_i\IA_n = \bigcap_{i\ge 1} \alpha_i\I_{g,1} = 1.\] 
Church and Farb  \cite[Conj 6.2 and the paragraph below Conj 6.3]{CF} conjectured  that each degree of the Lie algebras corresponding to the lower central series is uniformly representation stable. The following theorems address exactly the conjectures as stated by Church and Farb.

\begin{thmA}\label{thmA}
For every fixed $i\ge1$ and $n\in \NN$, the natural $\GL_n\ZZ$--representation on the $i$th quotient of the lower central series $\gr_i(\gamma\IA_n)$ extends to a rational $\GL_n \QQ$--representation.
\end{thmA}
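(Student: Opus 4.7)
The starting point is the classical identification $\gr_1(\gamma\IA_n)\otimes\QQ \cong H^*\otimes\Lambda^2 H$ due to Cohen--Pakianathan, Farb, and Kawazumi, where $H=\QQ^n$ is the standard representation; the right-hand side is manifestly a rational $\GL_n\QQ$--representation extending the natural $\GL_n\ZZ$--action on the left. This is the only nontrivial representation-theoretic input I would use.

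For any group $G$ the graded Lie algebra $\gr(\gamma G)$ of the lower central series is generated in degree one, so the universal property of the free graded Lie algebra $L(-)$ produces a natural surjective graded Lie algebra map $L(\gr_1)\twoheadrightarrow \gr(\gamma G)$. Applied to $\IA_n$ and tensored with $\QQ$, this gives in each degree $i$ a surjective $\GL_n\ZZ$--equivariant linear map
\[ \phi_i\colon L_i(H^*\otimes\Lambda^2 H)\twoheadrightarrow \gr_i(\gamma\IA_n)\otimes\QQ \]
whose source is a rational $\GL_n\QQ$--representation. The theorem thus reduces to showing that $K:=\ker\phi_i$ is a $\GL_n\QQ$--invariant subspace, for then $\gr_i(\gamma\IA_n)\otimes\QQ$ inherits a rational $\GL_n\QQ$--structure extending the $\GL_n\ZZ$--action.

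The main obstacle is precisely this last invariance statement. Since $K$ is $\SL_n\ZZ$--invariant and $\SL_n\ZZ$ is Zariski dense in the algebraic group $\SL_n$ (Borel density), $K$ is automatically $\SL_n\QQ$--invariant; however, one cannot directly upgrade $\GL_n\ZZ$--invariance to $\GL_n\QQ$--invariance, because $\GL_n\ZZ$ fails to be Zariski dense in $\GL_n$. The way out is to observe that the central torus of $\GL_n$, consisting of scalar matrices $\lambda I$, acts on $H^*\otimes\Lambda^2 H$ by the single character $\lambda$ (namely $\lambda^{-1}$ on $H^*$ and $\lambda^2$ on $\Lambda^2 H$), and hence on $L_i(H^*\otimes\Lambda^2 H)$ by the single character $\lambda^i$; every $\QQ$--linear subspace of $L_i$ is therefore automatically invariant under the centre. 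Since $\SL_n$ together with the centre generate $\GL_n$ as an algebraic group, this promotes $\SL_n\QQ$--invariance of $K$ to the desired $\GL_n\QQ$--invariance, completing the argument.
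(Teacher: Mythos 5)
Your proposal is correct, and it shares its skeleton with the paper's proof --- both start from the identification $H_1(\IA_n;\QQ)\cong{\bigwedge}^2\QQ^n\otimes(\QQ^n)^*$ and the $\GL_n\ZZ$--equivariant surjection $\mathcal L_i(H_1(\IA_n;\QQ))\surject\gr_i(\gamma\IA_n)$ from the free Lie algebra, and both ultimately rest on Borel density --- but your final step is genuinely different. The paper does not study the kernel at all: it invokes the fact (from its \autoref{sec:res}) that restrictions of irreducible rational $\GL_n\QQ$--representations to $\GL_n\ZZ$ stay irreducible, so the source is semisimple as a $\GL_n\ZZ$--representation and the quotient is abstractly a direct sum of such restrictions, which yields an extension without any canonicity. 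You instead prove that the kernel $K$ of $\phi_i$ is an honest $\GL_n\QQ$--subrepresentation, combining Zariski density of $\SL_n\ZZ$ in $\SL_n$ with the observation that the central torus acts on $\mathcal L_i({\bigwedge}^2\QQ^n\otimes(\QQ^n)^*)$ through the single character $\lambda\mapsto\lambda^i$, so that every subspace is centre-invariant. One point to phrase carefully: since $\SL_n(\QQ)\cdot\QQ^\times\cdot I\neq\GL_n(\QQ)$ (determinants land only in $\pm(\QQ^\times)^n$), the promotion to $\GL_n\QQ$--invariance must be run at the level of algebraic groups --- the stabilizer of $K$ is Zariski closed and contains both $\SL_n$ and the central $\mathbb G_m$, hence equals $\GL_n$, which then gives invariance on $\QQ$--points; your wording ``generate $\GL_n$ as an algebraic group'' indicates you mean exactly this, so it is a presentational caveat rather than a gap. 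What your route buys is more than the theorem asks: it produces a canonical rational structure on $\gr_i(\gamma\IA_n)$ for every $n$ making $\phi_i$ equivariant, whereas the paper only obtains an abstract (and non-unique) extension here and has to work again in \autoref{thm:VICQ beta}, via length bounds on the partitions occurring in $\mathcal L_i$, to pin down a compatible choice of extensions for large $n$; your argument would streamline that later step.
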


\begin{thmA}\label{thmB}
For every fixed $i\ge 1$ the sequence of the $i$th quotients of the lower central series $\{\gr_i(\gamma\IA_{n})\}_{n\in\NN}$ of $\GL_n \QQ$--representations is uniformly representation stable.
\end{thmA}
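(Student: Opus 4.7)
The plan is to realize the sequence $\{\gr_i(\gamma\IA_n)\otimes\QQ\}_n$ as a finitely generated rational $\VIC$-module and then invoke \autoref{thmA:VICrepstab}.

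First, I would package the sequence as a $\VIC$-module. A morphism in $\VIC$ from $\QQ^n$ to $\QQ^m$ consists of an injection $f\colon \QQ^n\inject\QQ^m$ together with a complement $C\subseteq\QQ^m$ of $f(\QQ^n)$. After choosing bases compatible with this decomposition, the data $(f,C)$ determines an inclusion of free groups $F_n\inject F_m$ together with a retraction $F_m\twoheadrightarrow F_n$ killing the generators spanning $C$. The inclusion $\Aut(F_n)\inject\Aut(F_m)$ given by extending automorphisms via the identity on the $C$-summand restricts to $\IA_n\inject\IA_m$ and preserves the lower central series. Hence for each $i$ one obtains a natural map $\gr_i(\gamma\IA_n)\to\gr_i(\gamma\IA_m)$ depending only on $(f,C)$, and using \autoref{thmA} to extend the action to $\GL_n\QQ$, one gets a rational $\VIC$-module $W_i$ with $W_i(\QQ^n)=\gr_i(\gamma\IA_n)\otimes\QQ$.

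Second, I would prove that $W_i$ is finitely generated by induction on $i$. For $i=1$, the classical computations of Andreadakis, Cohen--Pakianathan, Farb and Kawazumi identify $\IA_n^{\ab}\otimes\QQ$ with $H^*\otimes\Lambda^2 H$ for $H=\QQ^n$; this is manifestly a finitely generated $\VIC$-module (generated in a bounded degree). For the inductive step, I would use the general fact that the graded Lie algebra of the lower central series $\gr(\gamma G)$ is always generated as a Lie algebra in degree one, so the iterated Lie bracket yields a $\VIC$-equivariant surjection $W_1^{\otimes i}\twoheadrightarrow W_i$. Since tensor products and quotients of finitely generated $\VIC$-modules are finitely generated, this completes the induction. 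Finally, each $\gr_i(\gamma\IA_n)$ is finite-dimensional (as a sub-quotient of an iterated tensor power of the finite-dimensional $\IA_n^{\ab}\otimes\QQ$), so \autoref{thmA:VICrepstab} applies to give uniform representation stability.

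The main obstacle is the $\VIC$-functoriality verification in the first step: the reason for recording the complement $C$ in a $\VIC$-morphism (rather than just the injection) is precisely that it pins down the retraction needed to extend elements of $\IA_n$ into $\IA_m$ functorially, and one must check carefully that this assignment respects composition of $\VIC$-morphisms. The remaining ingredients---the degree-one identification and the inductive bracket argument---are essentially formal, so the payoff of having developed the general $\VIC$-module framework (in particular \autoref{thmA:VICrepstab}) is that the representation-theoretic content of the statement collapses to this categorical packaging.
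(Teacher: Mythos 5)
Your overall architecture mirrors the paper's (degree-one generation, a surjection from a functor of $H_1(\IA_n;\QQ)$, then \autoref{thmA:VICrepstab}), but the step you compress into ``using \autoref{thmA} to extend the action to $\GL_n\QQ$, one gets a rational $\VIC$--module $W_i$'' is the actual crux, and as stated it has a genuine gap. Two problems. First, your construction only makes sense over $\ZZ$: a morphism of $\VIC_\QQ$ is an arbitrary rational injection with complement, and such a map (e.g.\ $e_1\mapsto \tfrac12 e_1$) is not induced by any homomorphism of free groups, so choosing bases yields at best a $\VIC_\ZZ$--module --- and even there, independence of the choice of basis is exactly the statement that $\IA_n$ and $\Aut(F_{n-m})$ act trivially on the image of $\gr_i(\gamma\IA_m)$, which is the content of \autoref{prop:VICZ beta}. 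Second, and more seriously, the extension furnished by \autoref{thmA} is far from unique: as explained in \autoref{sec:res}, $D_k$ and $D_{k+2}$ restrict to isomorphic $\GL_n\ZZ$--representations, so there are infinitely many rational extensions, and to get a $\VIC_\QQ$--module one must choose them compatibly with the maps $\phi_n$ and verify that $\GL_{n-m}\QQ$ (not merely $\GL_{n-m}\ZZ$) acts trivially on images before \autoref{prop:VICmod} applies. The paper states explicitly that it could not put a $\VIC_\QQ$--structure on the whole sequence; \autoref{thm:VICQ beta} only produces a rational $\VIC_\QQ$--module agreeing with $\gr_i(\gamma\IA_n)$ for all large $n$, by bounding the lengths of highest weights occurring in $\mathcal L_i(H_1(\IA_n;\QQ))$ so that for $n\ge N$ the extension is uniquely determined by requiring $\mathcal L_i(H_1(\IA_n;\QQ))\surject\gr_i(\gamma\IA_n)$ to be $\GL_n\QQ$--equivariant, and then transporting the triviality of the $\GL_{n-m}\QQ$--action along this surjection. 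None of this is automatic from \autoref{thmA}.

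A secondary issue is your inductive finite-generation step: ``tensor products of finitely generated $\VIC$--modules are finitely generated'' is neither proved in the paper nor formal. For your specific $W_1\colon V\mapsto V^*\otimes\bigwedge^2 V$ it can be rescued from the if-and-only-if in \autoref{thmA:VICrepstab} together with \autoref{thm:innerGL} and \autoref{prop:wedgeGL} (this is how the paper treats $\mathcal L_k(V)$), but the paper itself sidesteps the question by restricting the $\VIC_\ZZ$--structure to $\FI$ and quoting the finite generation of the corresponding $\FI$--module from Church--Ellenberg--Farb (alternatively Djament). Note also that the $\GL_n\QQ$--equivariance of your bracket surjection $W_1^{\otimes i}\surject W_i$ is not free: it is what singles out the correct extension, i.e.\ the same large-$n$ uniqueness argument as above. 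These last points are fixable; the rationality/extension step is where the real content of the theorem lies.
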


\begin{thmA}\label{thmC}
For every fixed $i\ge 1$ the sequence of the $i$th quotients of the lower central series $\{\gr_i(\gamma\I_{g,1})\}_{g\in\NN}$ of $\Sp_{2g} \QQ$--representations is uniformly representation stable.
\end{thmA}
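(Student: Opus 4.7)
The plan is to mirror the proof of \autoref{thmB}, replacing $\VIC$ by $\SI$ and $\IA_n$ by $\I_{g,1}$. Concretely, I would assemble the sequence $g\mapsto\gr_i(\gamma\I_{g,1})\otimes\QQ$ into a finitely generated rational $\SI$-module $M_i$ and then invoke \autoref{thmA:SIrepstab}.

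To build $M_i$, the standard stabilization $\Sigma_{g,1}\hookrightarrow\Sigma_{g+1,1}$ (obtained by gluing on a handle along part of the boundary) induces a group homomorphism $\I_{g,1}\to\I_{g+1,1}$ equivariant with respect to the symplectic stabilization $\Sp_{2g}\ZZ\hookrightarrow\Sp_{2g+2}\ZZ$. Passing to the $i$-th graded quotient of the lower central series and tensoring with $\QQ$ gives a $\Sp_{2g}\QQ$-equivariant linear map $\gr_i(\gamma\I_{g,1})\otimes\QQ\to\gr_i(\gamma\I_{g+1,1})\otimes\QQ$, provided that the $\Sp_{2g}\ZZ$-action on $\gr_i(\gamma\I_{g,1})\otimes\QQ$ extends to a rational $\Sp_{2g}\QQ$-representation (the symplectic analogue of \autoref{thmA}). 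By Witt extension, any isometry $\QQ^{2g}\hookrightarrow\QQ^{2g+2}$ factors as the standard stabilization followed by an element of $\Sp_{2g+2}\QQ$, so the data above assembles into a functor $M_i\colon\SI\to\xmod\QQ$.

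Finite generation of $M_i$ I would prove by induction on $i$. For $i=1$, Johnson's theorem identifies $\I_{g,1}^{\ab}\otimes\QQ\cong\Lambda^3 H_\QQ/H_\QQ$, where $H_\QQ=H_1(\Sigma_{g,1};\QQ)$; both $\Lambda^3 H_\QQ$ and $H_\QQ$ arise from Schur functors applied to the defining $\SI$-module $\QQ^{2g}\mapsto H_\QQ$ and are plainly finitely generated, so $M_1$ is too. For $i\ge 2$, the Lie bracket yields a surjection $M_1\otimes M_{i-1}\surject M_i$ of $\SI$-modules; since tensor products and quotients of finitely generated rational $\SI$-modules remain finitely generated (the latter using \autoref{thmA:SInoeth}), the induction closes, and each $M_i(\QQ^{2g})$ is finite dimensional. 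Applying \autoref{thmA:SIrepstab} then yields uniform representation stability.

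The main obstacle is the rationality input: showing that the $\Sp_{2g}\ZZ$-action on each $\gr_i(\gamma\I_{g,1})\otimes\QQ$ extends to a rational $\Sp_{2g}\QQ$-representation, which is the symplectic counterpart of \autoref{thmA}. In the $\IA_n$ case this rests on Hain's description of the Malcev Lie algebra of $\IA_n$ as an algebraic object over $\QQ$ carrying a natural $\GL_n\QQ$-action; one needs the corresponding result of Hain for $\I_{g,1}$, which identifies the associated graded of its Malcev Lie algebra as an algebraic $\Sp_{2g}\QQ$-representation. Once this rationality is in hand, $\SI$-functoriality and the inductive finite generation step are comparatively routine.
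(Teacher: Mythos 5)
Your overall skeleton is the same as the paper's: package $\{\gr_i(\gamma\I_{g,1})\otimes\QQ\}_{g}$ into a finitely generated rational $\SI_\QQ$--module and invoke \autoref{thmA:SIrepstab}. But two of your key steps have gaps as written. First, the functor construction: Witt extension does show that every isometry $\QQ^{2g}\inject\QQ^{2g+2h}$ is the standard inclusion followed by an element of $\Sp_{2g+2h}\QQ$, but that element is unique only up to the stabilizer $\Sp_{2h}\QQ$ of the orthogonal complement, so ``the data assembles into a functor'' requires verifying that this complementary symplectic group acts trivially on the image of $\gr_i(\gamma\I_{g,1})$ --- exactly the hypothesis of \autoref{prop:SImod}. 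Checking it is where the structure of the Torelli group actually enters: the mapping class group of the complementary subsurface commutes with $\Mod(\Sigma_{g,1})$ and surjects, modulo Torelli, onto the complementary $\Sp_{2h}\ZZ$, while Torelli acts trivially on $\gr_i$ by the N--series property (this is \autoref{prop:SIZ beta}); one must then still pass from trivial $\Sp_{2h}\ZZ$--action to trivial $\Sp_{2h}\QQ$--action, which the paper does by transferring invariance along the surjection $\mathcal L_i(H_1(\I_{g,1};\QQ))\surject\gr_i(\gamma\I_{g,1})$ in \autoref{thm:SIQ beta}. Relatedly, the rationality input you defer to Hain is obtained in the paper by an elementary argument (also used for \autoref{thmA}, which does not rest on Hain): the lower central series is generated in degree one, so $\gr_i$ is a quotient of the rational representation $\mathcal L_i(H_1(\I_{g,1};\QQ))$ by an $\Sp_{2g}\ZZ$--subrepresentation, and by Borel density (\autoref{sec:res}) any such subrepresentation is automatically an $\Sp_{2g}\QQ$--subrepresentation.

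Second, your finite generation induction leans on the claim that tensor products of finitely generated rational $\SI_\QQ$--modules are finitely generated. (Quotients are trivially fine and do not need \autoref{thmA:SInoeth}.) That tensor statement is true but is neither stated nor proved in the paper, and it is not routine: via the equivalence in \autoref{thmA:SIrepstab} it amounts to showing that the tensor product of two uniformly representation stable rational sequences is again uniformly representation stable, and the surjectivity condition (that the $\Sp_{2g+2}\QQ$--span of the image of $V_g\otimes W_g$ is everything) needs a genuine argument in the style of Church--Farb and Koike's formula, not just multiplicity bookkeeping. The paper avoids this entirely: it restricts the $\SI_\ZZ$--module to its underlying $\FI$--module and quotes \cite[Thm 7.3.7]{CEF} for finite generation, then applies \autoref{thmA:SIrepstab}. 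A small factual slip: for the surface with one boundary component Johnson gives $H_1(\I_{g,1};\QQ)\cong\bigwedge^3 H_1(\Sigma_{g,1};\QQ)$; the quotient by $H$ occurs in the closed case, though this does not affect your finite generation claim for $M_1$.
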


It is noteworthy that if a $\GL_n\ZZ$--representation can be extended to a rational $\GL_n\QQ$--representation, this extension is not unique. In \autoref{sec:res} it is explained how there are infinitely many different possible extensions. However, a sequence of extensions that satisfies \autoref{thmB} is uniquely determined for all large enough $n\in\NN$. To prove \autoref{thmB}, we will find the correct way to extend these representations for large enough $n\in\NN$. For representations of the symplectic groups this problem does not arise.

We are also able to prove similar results for the Lie algebras corresponding to the Johnson filtrations.

\begin{thmA}\label{thmD}
For every fixed $i\ge 1$ the sequence of the $i$th quotients of the Johnson filtration $\{\gr_i(\alpha\IA_{n})\}_{n\in\NN}$ of $\GL_n \QQ$--representations is uniformly representation stable.
\end{thmA}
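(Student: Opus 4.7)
The plan is to proceed in parallel with the proof of Theorem \ref{thmB}, exhibiting the sequence $\{\gr_i(\alpha\IA_n)\otimes\QQ\}_{n\in\NN}$ as a finitely generated rational $\VIC$-module, so that \autoref{thmA:VICrepstab} yields uniform representation stability. First I would put a $\VIC$-module structure on this sequence. A morphism $(V\hookrightarrow W,\, V^\perp)$ in $\VIC$ induces, after choosing ordered bases compatible with the decomposition $W = V\oplus V^\perp$, an inclusion of free groups $F_V\hookrightarrow F_W$ and hence an injection $\Aut(F_V)\hookrightarrow\Aut(F_W)$ extending automorphisms by the identity on the complement. This injection preserves the Torelli subgroup and, by the naturality of the lower central series of the free group under inclusions of free factors, sends $\alpha_i\IA$ into $\alpha_i\IA$, giving maps on associated graded pieces.

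Next I would establish the rational structure by invoking the classical Andreadakis--Johnson homomorphism
\[ \tau_i \colon \gr_i(\alpha\IA_n) \hookrightarrow \Hom_\ZZ\bigl(H,\mathcal L_{i+1}(H)\bigr), \quad [\phi]\mapsto\bigl(x\mapsto \phi(x)x^{-1}\bigr), \]
where $H = H_1(F_n;\ZZ)\cong\ZZ^n$ and $\mathcal L_{i+1}(H)$ is the $(i{+}1)$--st homogeneous piece of the free Lie algebra on $H$, identified with $\gamma_{i+1}F_n/\gamma_{i+2}F_n$. The map $\tau_i$ is injective by the very definition of the Johnson filtration, is $\GL_n\ZZ$--equivariant, and is natural with respect to inclusions of free factors, so the $\tau_i$ assemble into a monomorphism of $\VIC$-modules. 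Because the target is a polynomial $\GL_n\ZZ$-representation, its rationalization is a rational $\GL_n\QQ$-representation, and the image of $\tau_i\otimes\QQ$ is a subrepresentation; this endows each $\gr_i(\alpha\IA_n)\otimes\QQ$ with the desired rational $\GL_n\QQ$-structure, and the $\VIC$-structure maps are automatically rational since they are pulled back from a polynomial module. Unlike in Theorem \ref{thmB}, no delicate choice of extension to $\GL_n\QQ$ is needed here, since the extension is forced by the embedding into a polynomial representation.

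Finally I would check finite generation of the ambient $\VIC$-module. The standard representation $n\mapsto\QQ^n$ is a finitely generated $\VIC$-module in generating degree $1$, and using the splittings built into $\VIC$-morphisms its dual $n\mapsto(\QQ^n)^*$ is as well. The piece $\mathcal L_{i+1}(H)\otimes\QQ$ is a $\GL$-equivariant quotient of $H^{\otimes(i+1)}\otimes\QQ$, and tensor products and quotients of finitely generated $\VIC$-modules are again finitely generated; hence $\Hom(H,\mathcal L_{i+1}(H))\otimes\QQ\cong H^*\otimes\mathcal L_{i+1}(H)\otimes\QQ$ is a finitely generated rational $\VIC$-module. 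Applying the noetherian \autoref{thmA:VICnoeth} to the $\VIC$-submodule $\{\gr_i(\alpha\IA_n)\otimes\QQ\}$ shows that it too is finitely generated, and \autoref{thmA:VICrepstab} then completes the proof. The main obstacle will be the bookkeeping step of verifying that $\tau_i$ is natural with respect to the $\VIC$-structure, i.e.\ that the Andreadakis--Johnson construction commutes with extending automorphisms by the identity on a free complement; this reduces to a standard compatibility of the lower central series of the free group with inclusions of free factors, but it is the one place where a careful argument is required.
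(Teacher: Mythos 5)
Your overall strategy is sound and, importantly, it diverges from the paper at the crucial finite-generation step. The paper also uses the Johnson homomorphism $\tau_i\colon \gr_i(\alpha\IA_n)\hookrightarrow (\QQ^n)^*\otimes\mathcal L_{i+1}(\QQ^n)$, but only to extend the $\GL_n\ZZ$--action to a rational $\GL_n\QQ$--action (\autoref{prop:GLQ alpha}); the finite generation of the resulting $\VIC_\QQ$--module is then imported from Church--Putman \cite{CP} (bounded generation of $W(k)$ in the proof of their Theorem~G, plus their finite-dimensionality result), with Djament \cite{Dj} cited as an alternative. You instead get finite generation internally: realize $\{\gr_i(\alpha\IA_n)\otimes\QQ\}$ as a submodule of the ambient module $\{(\QQ^n)^*\otimes\mathcal L_{i+1}(\QQ^n)\}$ and invoke the noetherian \autoref{thmA:VICnoeth}, then \autoref{thmA:VICrepstab}. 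This is a genuine and attractive simplification (it also yields finite-dimensionality of the pieces for free, and plausibly a $\VIC_\QQ$--structure for all $n$ rather than only large $n$ as in \autoref{thm:alphaVICQ}), at the cost of having to verify two things the paper never needs.

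Those two points are where your written justification is currently insufficient. First, the claim that the image of $\tau_i\otimes\QQ$ is a $\GL_n\QQ$--subrepresentation does not follow merely from the target being (the rationalization of) a polynomial representation: as \autoref{sec:res} explains, a $\GL_n\ZZ$--stable subspace of a rational representation need not be $\GL_n\QQ$--stable, because distinct irreducibles can restrict isomorphically after twisting by $D_{\pm 2k}$. The conclusion is nevertheless true here, but for a different reason: every irreducible constituent $\GL_n(\lambda^+,\lambda^-)$ of $(\QQ^n)^*\otimes\mathcal L_{i+1}(\QQ^n)$ has $|\lambda^+|-|\lambda^-|=i$, while a nontrivial determinant twist changes this quantity by $2kn\neq 0$; hence the constituents restrict to pairwise non-isomorphic irreducible $\GL_n\ZZ$--representations and every $\GL_n\ZZ$--submodule is automatically $\GL_n\QQ$--stable (this is the same mechanism as the $N^+ ,N^-$ argument in \autoref{thm:VICQ beta}, and here it works for all $n$). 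You need to say this. Second, finite generation of the ambient module is asserted via ``tensor products of finitely generated $\VIC$--modules are finitely generated,'' which is not proved in the paper; either prove that general fact (e.g.\ via the if-and-only-if of \autoref{thmA:VICrepstab} together with \autoref{thm:innerGL}), or, more simply, show directly that $\{(\QQ^n)^*\otimes(\QQ^n)^{\otimes(i+1)}\}$ is generated in ranks $\le i+2$: a pure tensor $f\otimes v_1\otimes\cdots\otimes v_{i+1}$ lies in the image of $U^*\otimes U^{\otimes(i+1)}$ once one chooses $U$ of dimension $\le i+2$ containing the $v_j$ with $f|_U\neq 0$ (or $f=0$) and a complement of $U$ inside $\ker f$; then pass to $\mathcal L_{i+1}$ as a functorial summand of the tensor power. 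With these two repairs, and the routine check of naturality of $\tau_i$ under standard inclusions that you already flag, your argument goes through.
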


\begin{thmA}\label{thmE}
For every fixed $i\ge 1$ the sequence of the $i$th quotients of the Johnson filtration $\{\gr_i(\alpha\I_{g,1})\}_{g\in\NN}$ of $\Sp_{2g} \QQ$--representations is uniformly representation stable.
\end{thmA}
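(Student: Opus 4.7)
The plan is to mirror the strategy of \autoref{thmC} and \autoref{thmD}: realize $\{\gr_i(\alpha\I_{g,1})\otimes\QQ\}_{g\in\NN}$ as a finitely generated rational $\SI$-module so that \autoref{thmA:SIrepstab} applies directly.

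First I would set up the $\SI$-module structure. The standard isometric inclusion $\QQ^{2g}\hookrightarrow\QQ^{2g+2}$ corresponds to the boundary-connect-sum inclusion $\Sigma_{g,1}\hookrightarrow\Sigma_{g+1,1}$ obtained by gluing on a one-holed torus along an arc in $\partial\Sigma_{g,1}$. Extending mapping classes by the identity on the new handle gives a homomorphism $\Mod(\Sigma_{g,1})\to\Mod(\Sigma_{g+1,1})$ that restricts to $\I_{g,1}\hookrightarrow\I_{g+1,1}$; since the Johnson filtration is detected by the action on $\pi_1(\Sigma_{g,1})/\gamma_{i+1}\pi_1(\Sigma_{g,1})$ and the generators of the new handle are fixed by the image of $\I_{g,1}$, this inclusion preserves $\alpha_i$ and hence induces a map $\gr_i(\alpha\I_{g,1})\to\gr_i(\alpha\I_{g+1,1})$. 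Combined with the conjugation action of $\Sp_{2g}\ZZ$, these data assemble $\{\gr_i(\alpha\I_{g,1})\}_g$ into an $\SI$-module over $\ZZ$. Tensoring with $\QQ$ gives a rational $\SI$-module (rationality is automatic since the Johnson image will be shown to sit inside a manifestly rational $\Sp_{2g}\QQ$-representation, and $\Sp_{2g}\ZZ$ is Zariski dense in $\Sp_{2g}\QQ$, so the action extends uniquely).

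Next I would use the $i$-th Johnson homomorphism (Andreadakis, Morita, Hain) to embed this $\SI$-module into a manifestly finitely generated one. One has an injective $\Sp_{2g}\ZZ$-equivariant map
\[ \tau_i \colon \gr_i(\alpha\I_{g,1})\otimes\QQ \hookrightarrow \Hom(H_g, \mathcal{L}_{i+1}(H_g))\otimes\QQ, \]
where $H_g=H_1(\Sigma_{g,1};\ZZ)$ and $\mathcal{L}(H_g)$ is the free Lie algebra; injectivity is the defining property of the Johnson filtration, and naturality under subsurface inclusions makes $\tau_i$ a morphism of $\SI$-modules. Using the symplectic form to identify $H_g^*\cong H_g$, the target is a sub-$\SI$-module of $H_g^{\otimes(i+2)}\otimes\QQ$. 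The tensor power is a finitely generated rational $\SI$-module, for given $v_1\otimes\cdots\otimes v_{i+2}\in V^{\otimes(i+2)}$, the span of the $v_j$ extends to a symplectic subspace of dimension $\le 2(i+2)$, so the entire module is generated in symplectic ranks $\le i+2$.

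Finally, by \autoref{thmA:SInoeth} every submodule of a finitely generated rational $\SI$-module is again finitely generated, so $\gr_i(\alpha\I_{g,1})\otimes\QQ$ is finitely generated; then \autoref{thmA:SIrepstab} immediately yields uniform representation stability of the consistent sequence. The main obstacle I anticipate lies entirely in the first step: carefully verifying that the boundary-connect-sum inclusion carries $\alpha_i\I_{g,1}$ into $\alpha_i\I_{g+1,1}$ and, more subtly, that the resulting sequence is natural not just under the standard inclusions but under \emph{all} symplectic isometries (so that one genuinely has an $\SI$-module rather than merely a consistent sequence). Once this bookkeeping and the naturality of $\tau_i$ are in place, the remainder of the proof is a direct application of the machinery developed in \autoref{thmA:SInoeth} and \autoref{thmA:SIrepstab}.
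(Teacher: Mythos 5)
Your proposal is correct, but it reaches the crucial finite-generation input by a genuinely different route than the paper. The paper's proof of \autoref{thmE} first packages $\{\gr_i(\alpha\I_{g,1})\}_g$ into a rational $\SI_\QQ$--module via \autoref{thm:SIQ alpha} (which, like your argument, rests on the Satoh/Johnson monomorphism $\gr_i(\alpha\I_{g,1})\inject \QQ^{2g}\otimes\mathcal L_{i+1}(\QQ^{2g})$ and Borel density to upgrade the $\Sp_{2g}\ZZ$--action to a rational $\Sp_{2g}\QQ$--action), but then obtains finite generation by citing Church--Putman \cite{CP}: their bounded generation of $W(k)$ in the proof of their Theorem G, together with finite-dimensionality of each level, makes $V$ a finitely generated $\FI$--module and hence a finitely generated $\SI_\QQ$--module, after which \autoref{thmA:SIrepstab} applies. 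You instead exploit the Johnson embedding a second time: viewing it as a morphism of $\SI$--modules into $\QQ^{2g}\otimes\mathcal L_{i+1}(\QQ^{2g})\subset(\QQ^{2g})^{\otimes(i+2)}$, noting that the tensor power is generated in ranks $\le i+2$ (and hence finitely generated, each level being finite-dimensional), and invoking the noetherian property \autoref{thmA:SInoeth} to conclude that the submodule is finitely generated. This is internally consistent with the paper's machinery and makes the proof more self-contained, eliminating the dependence on \cite{CP} and giving finite-dimensionality for free; the price is exactly the bookkeeping you flag, namely checking that the Johnson homomorphism is natural for the stabilization maps and the conjugation action (so that its image really is a sub-$\SI_\QQ$--module; here one also uses, as in \autoref{sec:res}, that an $\Sp_{2g}\ZZ$--stable subspace of a rational representation is $\Sp_{2g}\QQ$--stable). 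That check is routine and is implicitly needed for \autoref{thm:SIQ alpha} anyway. The same noetherian trick (with \autoref{thmA:VICnoeth} and the target $(\QQ^n)^*\otimes\mathcal L_{i+1}(\QQ^n)$) would likewise replace the citation of \cite{CP} in the paper's proof of \autoref{thmD}.
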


 We also prove analogues of \autoref{thmA} for the filtrations $\gamma\I_{g,1}$, $\alpha\IA_n$ and $\alpha\I_{g,1}$, although they can already be found in the literature (eg in \cite[Thm 1.1]{HS} and \cite{SatSurvey}).

This work is the author's PhD thesis.

\subsection*{Acknowledgements.}
First and foremost the author wishes to thank his advisor Holger Reich for introducing him to the interesting and emerging research on representation stability. During his PhD the author was supported by the Berlin Mathematical School, the SFB Raum--Zeit--Materie and the Dahlem Research School. The author also wants to thank Kevin Casto, Tom Church, Daniela Egas Santander, Benson Farb, Daniel L\"utgehetmann, Jeremy Miller, Holger Reich, Steven Sam, David Speyer and Elmar Vogt for helpful conversations. Special thanks to Steven Sam for his extensive help with the modification rules, and to Kevin Casto for pointing out the conjectures to the author.


\section{Rational representation theory of the general linear groups and the symplectic groups}\label{section:repthy}

Let us start by shortly recalling the rational (or algebraic) representation theory of the algebraic groups $\GL_n\QQ$ and $\Sp_{2n}\QQ$. More elaboration can be found in the books of Fulton--Harris \cite{FH}, Green \cite{Green}, Goodman--Wallach \cite{GW09}, Jantzen \cite{Jantzen}, Weyl \cite{Weyl}, and the paper of Koike \cite{Ko}.

\subsection{Algebraic groups, polynomial and rational representations}\label{sec:alggrp}
In general an \emph{algebraic group} over a field $k$ is a variety that has a compatible group structure. That means multiplication and inverses are regular maps of varieties. Two simple examples are the additive group $(k,+)$ considered as the affine variety $\mathbb A^1$ and the multiplicative group $(k^\times,\cdot)$ considered as the subvariety of $\mathbb A^2$ given by the polynomial $xy=1$.

A more complicated example is the \emph{general linear group} $\GL_n(k)$. To define it, we consider a subvariety of $\mathbb A^{n^2+1}$. Let its coordinates be denoted by $\{x_{ij}\}_{i,j= 1, \dots, n}$ and $t$. Then the determinant $\det(x_{ij})$ of the matrix given by $\{x_{ij}\}_{i,j = 1, \dots, n}$ is a polynomial. Let $\GL_n(k)$ be the subvariety of $\mathbb A^{n^2+1}$ given as the zero set of the polynomial $\det (x_{ij})\cdot t -1$. The multiplication given by matrix multiplication and the inverse given by Cramer's rule is polynomial. Thus $\GL_n(k)$ is an algebraic group. 

The \emph{symplectic group} $\Sp_{2n}(k)$ is the subgroup of $\GL_{2n}(k)$ given by those matrices $(x_{ij})$ whose inverse is
\[ (x_{ij})^{-1} =  \Omega_n \cdot (x_{ij}^T) \cdot \Omega_n^T\]
where $\Omega_n$ is the Gram matrix of the standard symplectic form
\[ \Omega_n = \begin{pmatrix} 0&1 \\ -1&0 \\ &&\ddots\\ &&&0&1 \\ &&&-1&0 \end{pmatrix}.\]
Therefore the symplectic group is the zero set of the polynomials
\[ (x_{ij})\cdot  \Omega_n \cdot (x_{ij}^T) \cdot \Omega_n^T -1.\]
Thus $\Sp_{2n}(k)$ is an algebraic group.

A (finite dimensional) \emph{polynomial/rational representation} of an algebraic group $G$ over $k$ is a map
\[ G\longrightarrow \GL_n(k)\]
for some $n\in\NN$ that is a group homomorphism and a polynomial/rational map of varieties. For infinite dimensional representations, one may construct the algebraic group $\GL(V)$ for infinite dimensional vector spaces $V$ over $k$. Note that every polynomial representation is by definition also rational.

Both for $\GL_n(k)$ and $\Sp_{2n}(k)$ there is a \emph{standard representation} given by
\[ \GL_n(k) \stackrel{\id}{\longrightarrow} \GL_n(k)\]
and
\[ \Sp_{2n}(k) \inject \GL_{2n}(k)\]
respectively. Both are polynomial representations.

\subsection{The representation theory of $\GL_n\QQ$}
It turns out that both the polynomial representation theory and the rational representation theory of $\GL_n\QQ$ are semisimple and all irreducible representations are finite dimensional. 

The standard representation $V_n=\QQ ^n$ is irreducible.  All other irreducible {polynomial representations} can be constructed as subquotients of the $r$-fold tensor product $V_n^{\otimes r}$ for some $r\in\NN$ on which $\GL_n\QQ$ acts diagonally. We get a right action of the symmetric group $\mathfrak S_r$ on $r$ letters on $V_n^{\otimes r}$, which makes it a $\QQ \GL_n\QQ$--$\QQ \mathfrak S_r$--bimodule. Let $\lambda$ be a partition of $r$, then $r$ is called the \emph{size} of $\lambda$ and is denoted by $|\lambda|$. Let $\mathfrak S_r(\lambda)$ be its associated irreducible Specht module of $\mathbb Q\mathfrak S_r$, then
\[ \GL_n(\lambda) := V_n^{\otimes r} \tens[\QQ \mathfrak S_r] \mathfrak S_r(\lambda)\]
is an irreducible $\GL_n\QQ$--representation if $\lambda$ has at most $n$ rows and zero otherwise. 
We call this number the \emph{length} of $\lambda$ and denote it by $\ell(\lambda)$. 
In fact, all irreducible polynomial $\GL_n\QQ$--representations are isomorphic to $\GL_n(\lambda)$ for some partition $\lambda$ with at most $n$ rows and they are up to isomorphism uniquely determined by it. 


To get {rational representations} of $\GL_n\QQ$, we need to introduce the dual representation $V^*_n = \Hom_\QQ (\QQ ^n,\QQ )$ of $V_n$, which is defined by $g\cdot f(v) = f(g^{-1}\cdot v)$. Define furthermore $V_n^{\{r,s\}}$ to be the intersection of the kernels of all contraction maps
\begin{align*} 
V_n^{\otimes r}\otimes {V^*_n}^{\otimes s} &\longrightarrow V_n^{\otimes r-1}\otimes {V^*_n}^{\otimes s-1}\\ 
v_1\otimes \dots \otimes v_r \otimes f_1\otimes \dots \otimes f_s & \longmapsto f_j(v_i) \cdot v_1 \otimes \dots \otimes \hat v_i \otimes \dots \otimes v_r \otimes f_1 \otimes \dots \otimes \hat f_j \otimes \dots \otimes f_s.\
\end{align*}
Let $\lambda^+$ be a partition of $r$ and $\lambda^-$ a partition of $s$. We call $r+s$ the size of the pair $(\lambda^+,\lambda^-)$. Then
\[ \GL_n(\lambda^+,\lambda^-) := V_n^{\{r,s\}} \tens[\QQ \mathfrak S_r \otimes \QQ \mathfrak S_s] \big( \mathfrak S_r(\lambda^+) \otimes \mathfrak S_s(\lambda^-)\big)\]
is a rational $\GL_n\QQ$--representation. It is irreducible if the length of the pair $\ell(\lambda^+)+\ell(\lambda^-)\le n$ and zero otherwise. All irreducible rational $\GL_n\QQ$--rep\-res\-en\-ta\-tions are isomorphic to $\GL_n(\lambda^+,\lambda^-)$ for some partitions $\lambda^+,\lambda^-$ which  together  have at most $n$ rows and they are up to isomorphism uniquely determined by it. 

In terms of weights, if $\ell(\lambda^+)+\ell(\lambda^-) \le n$, the irreducible representation $\GL_n(\lambda^+,\lambda^-)$ has the highest weight
\[ (\lambda^+_1L_1 + \lambda^+_2L_2 + \dots  ) - ( \lambda^-_1L_{n-1} + \lambda^-_2L_{n-2} + \dots).\]
Here $L_i \in \mathfrak h^*$ are elements of the dual vector space of the $n\times n$ diagonal matrices $\mathfrak h \cong \QQ^n$. The matrices $E_{i,i}$ sending $e_i$ to itself and all other $e_j$ to zero gives a basis of $\mathfrak h$ and \[L_i(E_{j,j}) = \delta_{i,j}\] gives its dual basis. For more details on the notation see Fulton--Harris \cite[\S15]{FH}.

Note that $ \GL_n(\lambda,\emptyset)=\GL_n(\lambda) $ is polynomial and $\GL_n(\emptyset)$ is the trivial representation.
Another significant representation is the one-dimensional \emph{determinant representation} $D$ given by
\[ g\cdot 1 =\det g\cdot 1.\]
For each $k\in \ZZ$ let $D_k$ be the one-dimensional representation given by
\[ g \cdot 1 = (\det g)^k \cdot 1.\]
The highest weight of $D_k$ is 
\[ k(L_1 +\dots + L_n).\]
Interestingly, if $V$ is the irreducible $\GL_n\QQ$--representation with highest weight
\[ \lambda_1L_1 + \dots + \lambda_nL_n\]
for some integers $\lambda_1 \ge \dots \ge \lambda_n$, then $V\otimes D_k$ is irreducible and has the highest weight
\[ (\lambda_1+k)L_1 + \dots + (\lambda_n+k)L_n.\]



\subsection{The representation theory of $\Sp_n\QQ$}
For the symplectic groups every rational representation is already polynomial. As for the general linear groups, the rational representation theory of $\Sp_{2n}\QQ$ is semisimple and every irreducible representation is finite dimensional. 

The standard representation $V_n=\QQ ^{2n}$ is irreducible.  All other irreducible {rational representations} can be constructed as subquotients of the $r$-fold tensor product $V_n^{\otimes r}$ for some $r\in\NN$ on which $\Sp_{2n}\QQ$ acts diagonally. Let $\langle \,\,,\,\rangle$ denote the symplectic form on $V_n$. Then for $r\ge2$ there are contractions
\begin{align*}
V_n^{\otimes r} &\longrightarrow V_n^{\otimes r-2}\\
v_1\otimes \dots \otimes v_r & \longmapsto \langle v_i,v_j\rangle \cdot v_1 \otimes \dots \otimes \hat v_i \otimes \dots \otimes \hat v_j \otimes \dots \otimes v_r.
\end{align*}
Let $V_n^{\langle r\rangle}$ denote the intersection of the kernels of all these maps. Then
\[ \Sp_{2n}(\lambda) := V_n^{\langle r \rangle}  \tens[\QQ \mathfrak S_r] \mathfrak S_r(\lambda)  \]
is a rational $\Sp_{2n}\QQ$--representation. It is irreducible if $\lambda$ has at most $n$ rows and zero otherwise. All irreducible rational $\Sp_{2n}\QQ$--representations are of this form. 


In terms of weights, if $\ell(\lambda) \le n$, the irreducible representation $\Sp_{2n}(\lambda)$ has the highest weight
\[ \lambda_1L_1 + \lambda_2L_2 + \dots + \lambda_nL_n .\]
Here $L_i \in \mathfrak h^*$ are elements of the dual vector space of the Cartan subalgebra $\mathfrak h \cong \QQ^n$ of the $2n\times 2n$ matrices generated by the basis $H_i = (E_{2i-1,2i-1} - E_{2i,2i})$ with $i=1, \dots, n$. Then $L_i$ is the dual basis with \[L_i(H_j) = \delta_{i,j}.\]  For more details on the notation see \cite[\S16+\S17]{FH}.

\subsection{Littlewood--Richardson coefficients}

Subsequently, we will make extensive use of the Littlewood--Richardson coefficients $c^\lambda_{\mu\nu}$. These arise in various situations, especially in the context of branching rules, which we wish to cover in the next subsections. An introduction to these coefficients can be found in Fulton \cite{Youngtableaux}. For our purpose the following two propositions suffice. 

The first proposition showcases the role of the Littlewood--Richardson coefficients in branching rules. Throughout the paper we use the abbreviation
\[ [V,W] = \dim \Hom_G(V,W)\]
for $G$--representations $V$ and $W$. If $V$ is simple and $W$ semisimple, $[V,W]=[W,V]$ counts the multiplicity of $V$ in $W$.

\begin{prop}
Let $\lambda, \mu, \nu$ be partitions. Then the Littlewood--Richardson coefficient $c^\lambda_{\mu\nu}$ computes the following multiplicities.
\begin{equation*}
[ \Res^{\mathfrak S_{m+n}}_{\mathfrak S_{m}\times\mathfrak S_n} \mathfrak S_{m+n}(\lambda), \mathfrak S_m(\mu) \otimes \mathfrak S_n(\nu) ] = c^\lambda_{\mu\nu}
\end{equation*}
if $|\lambda| = m+n$, $|\mu|= m$, $|\nu|=n$.
\begin{equation*}
[ \Res^{\GL_{m+n}\QQ}_{\GL_{m}\QQ\times\GL_n\QQ} \GL_{m+n}(\lambda), \GL_m(\mu) \otimes \GL_n(\nu) ] = c^\lambda_{\mu\nu}
\end{equation*}
if $\ell(\lambda) \le m+n$, $\ell(\mu) \le m$, $\ell(\nu)\le n$.
\begin{equation*}
[ \GL_n(\mu)\otimes \GL_n(\nu) , \GL_n(\lambda)] = c^{\lambda}_{\mu\nu} 
\end{equation*}
if $\ell(\lambda), \ell(\mu), \ell(\nu) \le n$.
\end{prop}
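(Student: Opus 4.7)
The plan is to derive all three identities from a single combinatorial source (the Littlewood--Richardson rule for Schur polynomials) via Schur--Weyl duality and Frobenius reciprocity, matching the approach of Fulton--Harris and Fulton's \emph{Young Tableaux} which are already cited.

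First, I would take as the starting point the classical definition of $c^\lambda_{\mu\nu}$ as the structure constants of the ring of symmetric functions with respect to the Schur basis: $s_\mu \cdot s_\nu = \sum_\lambda c^\lambda_{\mu\nu} s_\lambda$. The third identity is then essentially immediate: the character of $\GL_n(\lambda)$ restricted to the diagonal torus is the Schur polynomial $s_\lambda(x_1,\dots,x_n)$, and tensor products multiply characters, so comparing coefficients in the character ring gives $\GL_n(\mu) \otimes \GL_n(\nu) \cong \bigoplus_\lambda \GL_n(\lambda)^{\oplus c^\lambda_{\mu\nu}}$, where the hypothesis $\ell(\lambda), \ell(\mu), \ell(\nu) \le n$ ensures we are not losing representations that vanish on $n$ variables.

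Next, for the symmetric group identity I would invoke Frobenius reciprocity and the Frobenius characteristic map. The multiplicity of $\mathfrak S_m(\mu) \otimes \mathfrak S_n(\nu)$ inside $\Res^{\mathfrak S_{m+n}}_{\mathfrak S_m \times \mathfrak S_n} \mathfrak S_{m+n}(\lambda)$ equals the multiplicity of $\mathfrak S_{m+n}(\lambda)$ inside $\Ind_{\mathfrak S_m \times \mathfrak S_n}^{\mathfrak S_{m+n}}(\mathfrak S_m(\mu) \otimes \mathfrak S_n(\nu))$. Under the characteristic isomorphism from the representation ring of symmetric groups to symmetric functions, external tensor product followed by induction corresponds exactly to multiplication of Schur polynomials, so this multiplicity is the coefficient of $s_\lambda$ in $s_\mu s_\nu$, which is $c^\lambda_{\mu\nu}$.

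Finally, for the $\GL$-branching rule I would use Schur--Weyl duality. Writing $V_{m+n} = V_m \oplus V_n$ (choosing a complementary splitting of $\QQ^{m+n}$) and decomposing $V_{m+n}^{\otimes r}$ as a $\GL_{m+n} \times \mathfrak S_r$-bimodule via Schur--Weyl, $V_{m+n}^{\otimes r} \cong \bigoplus_{|\lambda|=r} \GL_{m+n}(\lambda) \otimes \mathfrak S_r(\lambda)$, and then restricting to $\GL_m \times \GL_n$ and regrouping summands by $\mathfrak S_p \times \mathfrak S_q \le \mathfrak S_{p+q}$ decompositions, one matches multiplicities of $\GL_m(\mu) \otimes \GL_n(\nu)$ on both sides to reduce the $\GL$-branching multiplicity to the symmetric group branching multiplicity just computed. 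The rational (non-polynomial) case reduces to the polynomial case by tensoring with a sufficiently high power of the determinant representation, which shifts $\lambda, \mu, \nu$ by a common column.

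The main potential obstacle is bookkeeping the length constraints: one must check that the Littlewood--Richardson coefficient $c^\lambda_{\mu\nu}$ automatically vanishes whenever the length hypotheses fail on the representation-theoretic side (so that no spurious contributions appear), which follows from the combinatorial LR rule since any LR tableau of shape $\lambda/\mu$ with content $\nu$ forces $\ell(\lambda) \le \ell(\mu) + \ell(\nu)$. Given that, the three multiplicity statements all collapse to the single identity $s_\mu s_\nu = \sum c^\lambda_{\mu\nu} s_\lambda$.
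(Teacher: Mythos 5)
Your sketch is correct, and it is worth noting that the paper itself offers no proof of this proposition: it is quoted as standard background material, with Fulton's \emph{Young Tableaux} cited as the reference, so the comparison is with the classical literature rather than with an argument in the paper. Your three reductions are the standard ones. For the symmetric groups, Frobenius reciprocity plus the Frobenius characteristic map turns the restriction multiplicity into the coefficient of $s_\lambda$ in $s_\mu s_\nu$, which is $c^\lambda_{\mu\nu}$ by definition of the structure constants. For the $\GL_n$ tensor product, the identity $s_\mu s_\nu=\sum_\lambda c^\lambda_{\mu\nu}s_\lambda$ in $n$ variables does the job, the hypotheses $\ell(\lambda),\ell(\mu),\ell(\nu)\le n$ guaranteeing that the relevant Schur polynomials are nonzero and linearly independent, so the coefficient is read off exactly. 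For the branching statement you could streamline the Schur--Weyl bookkeeping: restricting the character of $\GL_{m+n}(\lambda)$ to block-diagonal matrices gives $s_\lambda(x_1,\dots,x_m,y_1,\dots,y_n)=\sum_{\mu,\nu}c^\lambda_{\mu\nu}\,s_\mu(x_1,\dots,x_m)\,s_\nu(y_1,\dots,y_n)$, which immediately yields the multiplicity once $\ell(\mu)\le m$ and $\ell(\nu)\le n$; this is equivalent to your regrouping of $(V_m\oplus V_n)^{\otimes r}$ over $\mathfrak S_p\times\mathfrak S_q$ but avoids the combinatorial matching. Two minor remarks: the closing step about tensoring with a power of the determinant is unnecessary, since the proposition only involves the polynomial irreducibles $\GL_n(\lambda)$ indexed by single partitions (the rational case with pairs $(\lambda^+,\lambda^-)$ is handled later in the paper via Koike's universal characters and the modification rules); and your observation that $c^\lambda_{\mu\nu}\ne 0$ forces $\ell(\lambda)\le\ell(\mu)+\ell(\nu)$ is correct and is exactly why the length hypotheses cause no loss of generality.
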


The second proposition implies that all sums over partitions that appear in this paper are finite sums.

\begin{prop}
The Littlewood--Richardson coefficient $c^\lambda_{\mu\nu}$ is zero unless
\[ |\mu| + |\nu| = |\lambda|\]
and both $\mu$ and $\nu$ are subdiagrams of $\lambda$.
\end{prop}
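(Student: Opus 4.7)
The plan is to treat the two assertions separately, using the multiplicity interpretations of $c^\lambda_{\mu\nu}$ given in the preceding proposition.

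For the size condition $|\mu|+|\nu|=|\lambda|$, I would pick any $d\ge\ell(\lambda)+\ell(\mu)+\ell(\nu)$, so that the third identity gives $c^\lambda_{\mu\nu}=[\GL_d(\mu)\otimes\GL_d(\nu),\GL_d(\lambda)]$. The central torus $Z=\{tI_d:t\in\QQ^\times\}\subset\GL_d\QQ$ acts on any polynomial irreducible $\GL_d(\sigma)$ by the scalar $t^{|\sigma|}$, as can be read off the highest weight $\sum_i\sigma_iL_i$ using $L_i(tI_d)=t$. Consequently $Z$ acts by $t^{|\mu|+|\nu|}$ on $\GL_d(\mu)\otimes\GL_d(\nu)$ and by $t^{|\lambda|}$ on $\GL_d(\lambda)$, and a non-zero $\GL_d\QQ$-equivariant map between these two representations is in particular $Z$-equivariant, forcing $|\mu|+|\nu|=|\lambda|$.

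For the subdiagram condition, assume $|\mu|+|\nu|=|\lambda|$ and $c^\lambda_{\mu\nu}>0$. Setting $m=|\mu|$ and $n=|\nu|$, the first identity says that $\mathfrak S_m(\mu)\otimes\mathfrak S_n(\nu)$ is a summand of $\Res^{\mathfrak S_{m+n}}_{\mathfrak S_m\times\mathfrak S_n}\mathfrak S_{m+n}(\lambda)$. Forgetting the $\mathfrak S_n$-action turns $\mathfrak S_m(\mu)\otimes\mathfrak S_n(\nu)$ into $\dim\mathfrak S_n(\nu)$ copies of $\mathfrak S_m(\mu)$, so $\mathfrak S_m(\mu)$ occurs in $\Res^{\mathfrak S_{m+n}}_{\mathfrak S_m}\mathfrak S_{m+n}(\lambda)$. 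Iterating the classical branching rule $\Res^{\mathfrak S_k}_{\mathfrak S_{k-1}}\mathfrak S_k(\sigma)\cong\bigoplus_{\sigma'}\mathfrak S_{k-1}(\sigma')$, where $\sigma'$ runs over diagrams obtained from $\sigma$ by deleting one corner box, shows that the irreducible constituents of $\Res^{\mathfrak S_{m+n}}_{\mathfrak S_m}\mathfrak S_{m+n}(\lambda)$ are exactly the $\mathfrak S_m(\mu)$ for $\mu$ reachable from $\lambda$ by successively removing $n$ corner boxes; in particular $\mu\subseteq\lambda$. Exchanging the roles of $\mu$ and $\nu$ (or invoking the symmetry $c^\lambda_{\mu\nu}=c^\lambda_{\nu\mu}$ visible from the third identity) then yields $\nu\subseteq\lambda$.

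The only mildly technical ingredient is the iterated $\mathfrak S_k\downarrow\mathfrak S_{k-1}$ branching rule in the second paragraph; since this is entirely classical, no real obstacle is expected, and the whole proposition reduces to combining the central-character observation with this standard branching argument.
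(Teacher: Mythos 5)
Your argument is correct. Note, however, that the paper does not prove this proposition at all: it is quoted as a classical property of the Littlewood--Richardson coefficients (with Fulton's book as the reference), where it follows immediately from the combinatorial definition --- a Littlewood--Richardson skew tableau of shape $\lambda/\mu$ and content $\nu$ can only exist when $\mu\subseteq\lambda$ and $|\nu|=|\lambda|-|\mu|$, and $c^\lambda_{\mu\nu}=c^\lambda_{\nu\mu}$ gives the containment of $\nu$. What you do instead is derive the statement from the multiplicity interpretations in the preceding proposition: the central character of $\GL_d\QQ$ on polynomial irreducibles forces $|\mu|+|\nu|=|\lambda|$ (correct, since every weight of $\GL_d(\sigma)$ has total degree $|\sigma|$, and you rightly use the third identity, which carries no size hypothesis), and then the iterated $\mathfrak S_k\downarrow\mathfrak S_{k-1}$ branching rule, applied after the size condition legitimizes the first identity, gives $\mu\subseteq\lambda$, with the symmetry $c^\lambda_{\mu\nu}=c^\lambda_{\nu\mu}$ handling $\nu$. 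This buys a self-contained proof from facts already stated in the paper, at the cost of invoking representation theory for what is, by the combinatorial definition, a one-line observation; both routes are sound, and yours has no gaps beyond relying on the (classical, unproved here) branching rule for symmetric groups.
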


\subsection{Some simple branching rules}

The main tool of this paper will be the branching rules for rational representations. For the restrictions $\Res^{\GL_n\QQ}_{\GL_{n-1}\QQ} \GL_n(\lambda^+,\lambda^-)$ and $\Res^{\Sp_{2n}\QQ}_{\Sp_{2n-2}\QQ}\Sp_{2n}(\lambda)$ there are some simple rules that can be found in Goodman--Wallach \cite[Thm 8.1.1, Thm 8.1.5]{GW09}. To phrase these for the rational representations of the general linear groups, let $\lambda \in \ZZ^n$ with
\[ \lambda_1 = \lambda^+_1\ge  \lambda_2=  \lambda^+_2 \ge \dots\ge \lambda_{n-1} = -\lambda^-_2\ge \lambda_n =- \lambda^-_1\]
for a pair of partitions $(\lambda^+,\lambda^-)$ with length $\ell(\lambda^+)+\ell(\lambda^-) \le n$.

\begin{thm}\label{thm:GLres}
The multiplicity
\[[ \Res^{\GL_n\QQ}_{\GL_{n-1}\QQ} \GL_n(\lambda^+,\lambda^-), \GL_{n-1}(\mu^+,\mu^-)] =1\] if and only if
\[ \lambda_1 \ge\mu_1\ge\lambda_2\ge \mu_2\ge \dots \ge \lambda_{n-1}\ge\mu_{n-1}\ge\lambda_n.\]
Otherwise it is zero.
\end{thm}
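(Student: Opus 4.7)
The plan is to reduce to the well-known branching rule for \emph{polynomial} representations of $\GL_n\QQ$ by twisting with a sufficiently high power of the determinant.

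First I would recall the polynomial branching rule: for a partition $\kappa$ with $\ell(\kappa)\le n$, one has
\[ \Res^{\GL_n\QQ}_{\GL_{n-1}\QQ} \GL_n(\kappa) \cong \bigoplus_{\eta} \GL_{n-1}(\eta), \]
where the sum ranges over partitions $\eta$ with $\ell(\eta)\le n-1$ satisfying the interlacing $\kappa_1\ge \eta_1\ge \kappa_2\ge \eta_2\ge \dots \ge \eta_{n-1}\ge \kappa_n$, each appearing with multiplicity exactly one. This is the classical Gelfand--Tsetlin / Pieri-type branching and is, for example, the content of Goodman--Wallach, \cite[Thm.\ 8.1.1]{GW09}; it can be derived either from the Weyl character formula applied to the Schur polynomials or combinatorially by interpreting the restriction in terms of semistandard Young tableaux with entries $\le n$ versus $\le n-1$.

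Next I would pass from the rational representation $\GL_n(\lambda^+,\lambda^-)$ to a polynomial one. Pick an integer $k\ge \lambda^-_1$. Then the highest weight $\lambda + k(L_1+\dots+L_n)$ has all nonnegative components, so the representation $\GL_n(\lambda^+,\lambda^-)\otimes D_k$ is isomorphic to the polynomial representation $\GL_n(\kappa)$, where $\kappa_i = \lambda_i + k$. The determinant character $D_k$ restricts on $\GL_{n-1}\QQ$ to the corresponding determinant, so
\[ \Res^{\GL_n\QQ}_{\GL_{n-1}\QQ} \bigl(\GL_n(\lambda^+,\lambda^-)\otimes D_k\bigr) \cong \Bigl(\Res^{\GL_n\QQ}_{\GL_{n-1}\QQ} \GL_n(\lambda^+,\lambda^-)\Bigr)\otimes D_k. \]
Applying the polynomial branching rule on the left and cancelling the twist on the right translates each irreducible summand $\GL_{n-1}(\eta)$ back to $\GL_{n-1}(\mu^+,\mu^-)$, where $\mu \in \ZZ^{n-1}$ is defined by $\mu_i = \eta_i - k$ and is then repackaged into the pair $(\mu^+,\mu^-)$ via the conventions of the statement.

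The remaining bookkeeping step is to check that the interlacing condition $\kappa_i\ge\eta_i\ge\kappa_{i+1}$ for partitions is equivalent, after the shift by $-k$, to the claimed interlacing
\[ \lambda_1\ge\mu_1\ge\lambda_2\ge\mu_2\ge\dots\ge\lambda_{n-1}\ge\mu_{n-1}\ge\lambda_n, \]
and that the dictionary between weakly decreasing integer sequences $\mu\in\ZZ^{n-1}$ and pairs $(\mu^+,\mu^-)$ with $\ell(\mu^+)+\ell(\mu^-)\le n-1$ is bijective (so every $\eta$ allowed by the polynomial rule yields a \emph{legal} pair $(\mu^+,\mu^-)$, and conversely). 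I expect the main subtlety to be exactly this last verification: one has to confirm that if $\ell(\lambda^+)+\ell(\lambda^-)\le n$ and the interlacing holds, then the resulting $\mu$ automatically has a unique "zero gap" and thus corresponds to a pair with $\ell(\mu^+)+\ell(\mu^-)\le n-1$. This is a short combinatorial check on the signs of the $\mu_i$, and because the multiplicity in the polynomial rule is one, the multiplicity in the rational branching is likewise one whenever the interlacing is satisfied, and zero otherwise.
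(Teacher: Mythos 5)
Your argument is correct, but it is worth noting that the paper does not prove this statement at all: it simply quotes it from Goodman--Wallach \cite[Thm 8.1.1]{GW09}, where the branching rule is already stated for rational (not just polynomial) representations, so the interlacing condition on the full weight $\lambda\in\ZZ^n$ is taken as given. Your route is a genuine derivation: starting from the classical Gelfand--Tsetlin/Pieri branching rule for polynomial representations, twisting by $D_k$ with $k\ge\lambda^-_1$ to make the highest weight dominant and nonnegative, using that $D_k$ restricts to the determinant character of $\GL_{n-1}\QQ$, and then untwisting. The shift $\kappa_i=\lambda_i+k$, $\mu_i=\eta_i-k$ visibly carries the interlacing $\kappa_1\ge\eta_1\ge\dots\ge\eta_{n-1}\ge\kappa_n$ to $\lambda_1\ge\mu_1\ge\dots\ge\mu_{n-1}\ge\lambda_n$ bijectively, and multiplicities one are preserved since tensoring with a character is an equivalence on the category of rational representations. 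The final "bookkeeping" you flag as the main subtlety is in fact automatic: any weakly decreasing integer vector $\mu\in\ZZ^{n-1}$ determines a unique pair $(\mu^+,\mu^-)$ with $\ell(\mu^+)+\ell(\mu^-)\le n-1$ (the positive entries give $\mu^+$, the negated negative entries give $\mu^-$, and their total number cannot exceed $n-1$), so no separate check on a "zero gap" is needed; likewise $\eta_{n-1}\ge\kappa_n\ge 0$ guarantees that every $\eta$ in the polynomial rule is a partition and conversely every legal $\mu$ lifts to such an $\eta$. The only caution is rhetorical rather than mathematical: if you cite the polynomial rule from the same Goodman--Wallach theorem, the reduction is circular as a matter of attribution, so you should instead invoke the polynomial/Schur-function version (Weyl character formula or semistandard tableaux), from which your twisting argument then yields the rational case the paper needs. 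What your approach buys is a self-contained proof from more elementary input; what the paper's citation buys is brevity, since the reference already states the theorem in the required generality.
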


\begin{thm}\label{thm:Spres}
The multiplicity
\[ [ \Res^{\Sp_{2n}\QQ}_{\Sp_{2n-2}\QQ}\Sp_{2n}(\lambda), \Sp_{2n-2}(\mu)] \]
is nonzero if and only if
\[ \lambda_i\ge \mu_i \ge \lambda_{i+2}\]
for all $1\le i\le n-1$. ($\lambda_{n+1}$ is always zero.) 
\end{thm}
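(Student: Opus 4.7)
The plan is to derive the stated interlacing criterion from the full branching rule for $\Sp_{2n}\QQ \downarrow \Sp_{2n-2}\QQ$ recorded in Goodman--Wallach \cite[Thm 8.1.5]{GW09}. That rule, originally due to Zhelobenko, expresses the branching multiplicity as the number of partitions $\nu=(\nu_1\ge\dots\ge\nu_n\ge 0)$ satisfying the double interlacing
\[ \lambda_i \ge \nu_i \ge \lambda_{i+1} \quad\text{and}\quad \nu_i \ge \mu_i \ge \nu_{i+1} \quad\text{for all } i, \]
subject to the conventions $\lambda_{n+1}=\mu_n=0$. So my first step is simply to invoke this formula, reducing the claim to the combinatorial statement that the set of admissible $\nu$ is nonempty if and only if $\lambda_i \ge \mu_i \ge \lambda_{i+2}$ for all $1\le i\le n-1$.

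For necessity, I would chain the inequalities: $\lambda_i \ge \nu_i \ge \mu_i$ gives $\lambda_i \ge \mu_i$, and $\mu_i \ge \nu_{i+1} \ge \lambda_{i+2}$ gives $\mu_i \ge \lambda_{i+2}$. For sufficiency, I would exhibit an explicit admissible $\nu$: setting $\nu_i := \max(\mu_i,\lambda_{i+1})$ makes $\nu_i \ge \mu_i$ and $\nu_i \ge \lambda_{i+1}$ automatic, while the upper bounds $\lambda_i \ge \nu_i$ and $\mu_{i-1} \ge \nu_i$ reduce to $\lambda_i \ge \mu_i$ (with $\lambda_i \ge \lambda_{i+1}$) and to $\mu_{i-1} \ge \lambda_{i+1}$ (with $\mu_{i-1}\ge \mu_i$), all of which are either tautological or part of the hypothesis. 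A small bookkeeping check handles the boundary indices $i=1$ and $i=n-1$ using the conventions $\lambda_{n+1}=\mu_n=0$.

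The genuine content lies in the full Zhelobenko formula; once that is granted, the stated criterion is a short combinatorial reformulation and I expect no real obstacle beyond careful index bookkeeping. Deriving the Zhelobenko formula from scratch, if one did not wish to cite \cite{GW09}, would require more substantial machinery---e.g.\ the Weyl character formula together with an alternating-sum manipulation for the denominator, or Howe duality applied to a suitable see-saw pair---but none of that is needed for the statement at hand.
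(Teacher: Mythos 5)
Your proposal is correct, and it follows essentially the same route as the paper: the paper states this theorem as a direct quotation of the branching rule in Goodman--Wallach \cite[Thm 8.1.5]{GW09} and gives no further proof, while you cite the same result and merely make explicit the routine extraction of the non-vanishing criterion (necessity by chaining the double interlacing, sufficiency via $\nu_i=\max(\mu_i,\lambda_{i+1})$ with the conventions $\lambda_{n+1}=\mu_n=0$), which checks out.
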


In both cases, we obtain a corollary, which will prove useful later.

\begin{cor}\label{cor:GLres}
If
\[[ \Res^{\GL_n\QQ}_{\GL_{n-m}\QQ} \GL_n(\lambda^+,\lambda^-), \GL_{n-m}(\mu^+,\mu^-)] \neq 0\] 
then
\[ \ell(\mu^+)+\ell(\mu^-) \ge \ell(\lambda^+)+\ell(\lambda^-)-2m.\]
\end{cor}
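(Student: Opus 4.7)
The plan is to deduce the corollary from \autoref{thm:GLres} by iterating the one-step branching rule. Writing the restriction as a composition
\[ \Res^{\GL_n\QQ}_{\GL_{n-m}\QQ} = \Res^{\GL_{n-m+1}\QQ}_{\GL_{n-m}\QQ}\circ \dots \circ \Res^{\GL_n\QQ}_{\GL_{n-1}\QQ},\]
the nonvanishing of the total multiplicity produces a chain of intermediate pairs
\[ (\lambda^{(0),+},\lambda^{(0),-}) = (\lambda^+,\lambda^-),\ (\lambda^{(1),+},\lambda^{(1),-}),\ \dots,\ (\lambda^{(m),+},\lambda^{(m),-}) = (\mu^+,\mu^-)\]
with every consecutive multiplicity nonzero. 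Hence it suffices to show that a single step can decrease $\ell(\cdot^+)+\ell(\cdot^-)$ by at most $2$; summing over the $m$ steps then yields the claimed inequality.

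For the single-step bound, suppose $[\Res^{\GL_k\QQ}_{\GL_{k-1}\QQ}\GL_k(\alpha^+,\alpha^-),\,\GL_{k-1}(\beta^+,\beta^-)]\neq 0$. Form the vectors $\alpha\in\ZZ^k$ and $\beta\in\ZZ^{k-1}$ as in the notation preceding \autoref{thm:GLres}, so that the number of strictly positive (resp.\ strictly negative) entries of $\alpha$ is exactly $\ell(\alpha^+)$ (resp.\ $\ell(\alpha^-)$), and likewise for $\beta$. By \autoref{thm:GLres} the interlacing
\[ \alpha_1\ge \beta_1\ge \alpha_2 \ge \beta_2\ge \dots \ge \alpha_{k-1}\ge \beta_{k-1}\ge \alpha_k\]
holds. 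If $\ell(\alpha^+) = p$, then $\alpha_1,\dots,\alpha_p > 0$, and for $i \le p-1$ we have $\beta_i \ge \alpha_{i+1} > 0$, so $\ell(\beta^+) \ge p-1$; the claim is trivial when $p=0$. Symmetrically, $\beta_{k-j} \le \alpha_{k-j} < 0$ for $j=1,\dots,\ell(\alpha^-)-1$ gives $\ell(\beta^-) \ge \ell(\alpha^-) - 1$. Adding these,
\[ \ell(\beta^+) + \ell(\beta^-) \ge \ell(\alpha^+) + \ell(\alpha^-) - 2.\]

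Applying this bound to each consecutive pair $(\lambda^{(i-1),\pm}, \lambda^{(i),\pm})$ and telescoping gives $\ell(\mu^+) + \ell(\mu^-) \ge \ell(\lambda^+) + \ell(\lambda^-) - 2m$, as required. There is no real obstacle: the only mild subtlety is handling the case when $\ell(\alpha^+)$ or $\ell(\alpha^-)$ is zero, which is immediate since the bound $\ell(\beta^\pm) \ge \ell(\alpha^\pm) - 1$ is vacuously satisfied whenever the right-hand side is non-positive.
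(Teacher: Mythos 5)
Your proof is correct and follows essentially the same route as the paper: the paper also deduces the one-step bound $\ell(\mu^+)+\ell(\mu^-)\ge \ell(\lambda^+)+\ell(\lambda^-)-2$ from the interlacing condition of \autoref{thm:GLres} (via $\mu^+_{r^+-1}\ge\lambda^+_{r^+}>0>-\lambda^-_{r^-}\ge\mu^-_{r^--1}$) and then handles $m>1$ by induction, which is exactly your chain-of-intermediate-constituents argument made explicit. Your explicit treatment of the degenerate case $\ell(\alpha^\pm)=0$ is a minor point the paper leaves implicit.
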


\begin{proof}
First assume $m=1$. Let $r^+=\ell(\lambda^+), r^-=\ell(\lambda^-)$. Then from \autoref{thm:GLres}, we know that
\[ \mu^+_{r^+-1} \ge \lambda^+_{r^+} >0 >-\lambda^-_{r^-}\ge \mu^-_{r^--1}.\]
This implies
\[ \ell(\mu^+)+ \ell(\mu^-) \ge r^+-1+ r^--1= \ell(\lambda^+)+\ell(\lambda^-)-2\]
and proves the assertion for $m=1$. For $m>1$ the corollary follows by induction.
\end{proof}

\begin{cor}\label{cor:Spres}
If
\[ [ \Res^{\Sp_{2n}\QQ}_{\Sp_{2n-2m}\QQ}\Sp_{2n}(\lambda), \Sp_{2n-2m}(\mu)]\neq 0 \]
then
\[  \ell(\mu) \ge \ell(\lambda)-2m.\]
\end{cor}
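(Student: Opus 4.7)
The plan is to mirror the proof of \autoref{cor:GLres}, reducing to the case $m=1$ by induction and then reading off the needed length bound directly from the interlacing-type inequalities of \autoref{thm:Spres}.

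First I would handle the base case $m=1$. Suppose $[\Res^{\Sp_{2n}\QQ}_{\Sp_{2n-2}\QQ}\Sp_{2n}(\lambda),\Sp_{2n-2}(\mu)]\neq 0$ and set $r = \ell(\lambda)$. By \autoref{thm:Spres} we have $\mu_i \ge \lambda_{i+2}$ for all $1\le i\le n-1$. Applying this at $i = r-2$ (assuming $r\ge 3$; the cases $r\le 2$ are trivial since $\ell(\mu)\ge 0 \ge \ell(\lambda)-2$ already) gives $\mu_{r-2} \ge \lambda_r > 0$, so $\mu_{r-2} > 0$ and hence $\ell(\mu) \ge r-2 = \ell(\lambda)-2$. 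This establishes the $m=1$ case.

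Next I would extend to arbitrary $m$ by induction. Restriction from $\Sp_{2n}\QQ$ to $\Sp_{2n-2m}\QQ$ factors as successive restrictions through $\Sp_{2n-2}\QQ, \Sp_{2n-4}\QQ, \dots, \Sp_{2n-2m}\QQ$. If $\Sp_{2n-2m}(\mu)$ appears in $\Res^{\Sp_{2n}\QQ}_{\Sp_{2n-2m}\QQ}\Sp_{2n}(\lambda)$, then by semisimplicity there exists a chain of partitions $\lambda = \nu^{(0)},\nu^{(1)},\dots,\nu^{(m)} = \mu$ such that $\Sp_{2n-2j}(\nu^{(j)})$ appears in $\Res^{\Sp_{2n-2j+2}\QQ}_{\Sp_{2n-2j}\QQ}\Sp_{2n-2j+2}(\nu^{(j-1)})$ for each $j$. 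Applying the $m=1$ case to each step yields $\ell(\nu^{(j)})\ge \ell(\nu^{(j-1)})-2$, and telescoping gives $\ell(\mu)\ge \ell(\lambda)-2m$.

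There is no real obstacle here: the only thing to be careful about is the boundary behavior when $\ell(\lambda)\le 2$, which is handled trivially, and the use of semisimplicity to propagate nonvanishing multiplicities through an intermediate chain, which is standard. The argument is completely parallel to \autoref{cor:GLres}, with \autoref{thm:Spres} playing the role that \autoref{thm:GLres} played there.
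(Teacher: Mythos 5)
Your proof is correct and follows essentially the same route as the paper: the base case $m=1$ is read off from the interlacing inequalities of \autoref{thm:Spres} via $\mu_{r-2}\ge\lambda_r>0$ for $r=\ell(\lambda)$, and the general case follows by induction through intermediate restrictions. You merely spell out the boundary case $\ell(\lambda)\le 2$ and the semisimplicity argument for the chain, which the paper leaves implicit.
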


\begin{proof}
For $m=1$ this follows from \autoref{thm:Spres}, because
\[ \mu_{r-2}\ge\lambda_r>0\]
for $r= \ell(\lambda)$. For $m>1$ the corollary follows by induction.
\end{proof}

For relatively small partitions $\lambda^+,\lambda^-,\lambda$, the multiplicities of some irreducible constituents of  $\Res^{\GL_{m+n}\QQ}_{\GL_m\QQ\times\GL_{n}\QQ} \GL_n(\lambda^+,\lambda^-)$ and $\Res^{\Sp_{2m+2n}\QQ}_{\Sp_{2m}\QQ\times\Sp_{2n}\QQ}\Sp_{2n}(\lambda)$ can be expressed nicely in the stable branching rules. For the following results we quote Howe--Tan--Willenbring \cite[2.2.1, 2.2.3]{HTW}.

\begin{thm}\label{thm:GLstable}
Let $m,n,p,q\in \mathbb N$ such that $p+q\le \min(m,n)$. Let $\lambda^+,\mu^+,\nu^+$ be partitions with at most $p$ rows and  $\lambda^-,\mu^-,\nu^-$ with at most $q$ rows. Then
\begin{multline*} [\Res_{\GL_m\QQ\times\GL_n\QQ}^{\GL_{m+n}\QQ} \GL_{m+n} (\lambda^+,\lambda^-)\,,\, \GL_m(\mu^+,\mu^-) \otimes \GL_n(\nu^+,\nu^-)] \\= \sum_{\gamma^+, \gamma^-, \delta} c_{\mu^+ \nu^+}^{\gamma^+}c_{\mu^- \nu^-}^{\gamma^-}c_{\gamma^+ \delta}^{\lambda^+}c_{\gamma^- \delta}^{\lambda^-} .\end{multline*}
\end{thm}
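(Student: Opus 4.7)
\emph{Proof plan.} Since we are in the stable range $p+q \le \min(m,n)$, I would pass to characters and use Koike's theory of universal rational characters of $\GL$. In this range the character of $\GL_N(\lambda^+, \lambda^-)$ (with $N=m+n$) is the evaluation at the eigenvalues of a universal symmetric function $s_{[\lambda^+,\lambda^-]}$ independent of $N$; restriction along $\GL_m \times \GL_n \hookrightarrow \GL_{m+n}$ corresponds to the variable substitution $(x_1,\ldots,x_m,y_1,\ldots,y_n)$, and the branching multiplicity we want is the coefficient of $s_{[\mu^+,\mu^-]}(x)\,s_{[\nu^+,\nu^-]}(y)$ in the resulting expression.

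The plan to compute this coefficient has three steps. First, express $s_{[\lambda^+,\lambda^-]}$ in terms of ordinary Schur functions via Koike's explicit formula, as a combination of products of polynomial Schur functions $s_\alpha$ in the eigenvalues and ``antipolynomial'' Schur functions $\bar s_\beta$ in their duals. Second, apply the classical Littlewood--Richardson expansion
\[ s_\alpha(x,y) \;=\; \sum_{\mu,\nu} c^{\alpha}_{\mu\nu}\, s_\mu(x)\, s_\nu(y) \]
to both the polynomial and the dual pieces; this produces the factors $c^{\gamma^+}_{\mu^+\nu^+}$ and $c^{\gamma^-}_{\mu^-\nu^-}$ in the final answer. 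Third, re-collect the resulting four-fold sum of Schur polynomials back into universal rational characters of $\GL_m$ and of $\GL_n$; this change of basis produces the remaining two coefficients $c^{\lambda^+}_{\gamma^+\delta}$ and $c^{\lambda^-}_{\gamma^-\delta}$, and the shared partition $\delta$ arises as the common ``overlap'' shape between the polynomial and dual contributions that must match when re-assembling rational characters.

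The main obstacle is verifying that the stability hypothesis $p+q\le\min(m,n)$ really does eliminate all \emph{modification rules}---that is, spurious terms from partitions whose length exceeds the rank of one of the ambient groups. Outside the stable range such terms survive and produce correction summands, while inside they must all cancel against each other. Corollaries of the type of \autoref{cor:GLres}, which control how row counts change under restriction, already suggest the cancellation mechanism, but turning it into a rigorous combinatorial check requires careful tracking of the supports of the four Littlewood--Richardson coefficients in the sum. An alternative, more representation-theoretic route would be to start from the mixed-tensor realization $\GL_N(\lambda^+,\lambda^-)\subset V_N^{\otimes r}\otimes (V_N^*)^{\otimes s}$, decompose under $V_N = V_m \oplus V_n$ and $V_N^* = V_m^* \oplus V_n^*$, and observe that the cross contractions $V_m \otimes V_n^* \to 0$ and $V_n \otimes V_m^* \to 0$ become trivial on restriction; the partition $\delta$ would then parametrize pairs of upper/lower tensor slots that get ``decoupled'' by this vanishing, with Schur--Weyl duality on each factor supplying the remaining Littlewood--Richardson coefficients.
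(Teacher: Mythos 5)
The first thing to note is that the paper does not prove this statement at all: it is quoted directly from Howe--Tan--Willenbring \cite[2.2.1]{HTW} (with \cite[2.2.3]{HTW} playing the same role for \autoref{thm:Spstable}), so there is no in-paper argument for your sketch to be measured against. On its own terms, your plan runs along the right lines --- Koike's universal characters, the Littlewood--Richardson expansion in two sets of variables, then specialization to $R(\GL_m\QQ)\otimes R(\GL_n\QQ)$ --- which is the same machinery the paper itself deploys in its proof of \autoref{thm:outerSp}, and essentially how the cited sources obtain such formulas. But what you have written is a plan rather than a proof: the bookkeeping that is supposed to produce the four Littlewood--Richardson factors (in particular the appearance of the shared partition $\delta$, which comes from inverting the Littlewood product formula relating $\GL(\mu^+,\emptyset)\otimes\GL(\emptyset,\mu^-)$ to the mixed irreducibles, or equivalently from the cross contractions in your second route) is not carried out, and the step you yourself single out as the main obstacle is left open.

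Moreover, your diagnosis of that obstacle is off. You expect the terms affected by the modification rules to ``cancel against each other'' inside the stable range; in fact, under the hypothesis $p+q\le\min(m,n)$ they never arise for the multiplicity in question. In the universal-character expansion of the restriction (for instance in the exact formula of \autoref{thm:outerGL}), a pair $\big((\mu^+,\mu^-),(\nu^+,\nu^-)\big)$ can occur with nonzero coefficient only if $\mu^\pm\subseteq\lambda^\pm$ and $\nu^\pm\subseteq\lambda^\pm$, because $c^{\lambda}_{\mu\nu}=0$ unless $\mu$ and $\nu$ are subdiagrams of $\lambda$; hence every pair occurring has total length at most $\ell(\lambda^+)+\ell(\lambda^-)\le p+q\le\min(m,n)$, and by \autoref{prop:modGL}(\ref{item:stablemodGL}) the corresponding modified characters are already genuine irreducibles. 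The coefficient extraction is therefore exact, and there is nothing to cancel --- a containment argument of the same flavor as in the proof of \autoref{cor:branchingGL}. With that observation in hand, the shortest complete route is to specialize \autoref{thm:outerGL} to the stable range and verify the purely combinatorial identity equating its coefficient sum with the one stated here, or equivalently to carry out your LR-regrouping argument explicitly; either way, the combinatorial core is still missing from the proposal, so as it stands it has a genuine gap.
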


\begin{thm}\label{thm:Spstable}
Let $\lambda,\mu,\nu$ be partitions with at most $ \min(m,n)$ rows. Then
\[ [\Res_{\Sp_{2m}\QQ\times\Sp_{2n}\QQ}^{\Sp_{2m+2n}\QQ} \Sp_{2m+2n}(\lambda)\,,\, \Sp_{2m}(\mu)\otimes \Sp_{2n}(\nu)] = \sum_{\gamma, \delta} c_{\mu \nu}^{\gamma} c_{\gamma (2\delta)'}^{\lambda} \]
where $(2\delta)'$ is a partition with only even column lengths.
\end{thm}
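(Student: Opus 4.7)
My plan is to prove the identity at the level of characters, using the universal character theory of Koike and Terada. In the stable range $\ell(\mu), \ell(\nu), \ell(\lambda) \le \min(m,n)$, the character of $\Sp_{2N}(\lambda)$ is computed by a universal symplectic character $sp_\lambda \in \Lambda$, which lives in the ring of symmetric functions and is independent of $N$. The multiplicity we want is then exactly the coefficient of $sp_\mu(x)\, sp_\nu(y)$ in the specialization $sp_\lambda(x \sqcup y)$, where $x$ and $y$ are disjoint variable sets of sizes $m$ and $n$. Thus the whole question is translated into an identity in $\Lambda$, which will then descend to representations by specialization.

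The central tool is Koike's formula
\[ sp_\lambda \;=\; \sum_\delta (-1)^{|\delta|}\, s_{\lambda/(2\delta)'},\]
where $\delta$ runs over all partitions and $(2\delta)'$ is a partition all of whose columns have even length, together with its inverse
\[ s_\rho \;=\; \sum_{\mu,\epsilon} c^\rho_{\mu(2\epsilon)'}\, sp_\mu. \]
I would carry out the computation in three stages: first apply Koike's formula to rewrite $sp_\lambda(x \sqcup y)$ as a signed sum of skew Schur functions $s_{\lambda/(2\delta)'}(x \sqcup y)$; next apply the Schur coproduct $s_{\lambda/(2\delta)'}(x \sqcup y) = \sum_{\gamma,\rho,\sigma} c^\lambda_{(2\delta)'\gamma}\, c^\gamma_{\rho\sigma}\, s_\rho(x)\, s_\sigma(y)$; finally apply the inverse Koike relation separately to $s_\rho(x)$ and $s_\sigma(y)$ to rewrite everything in the basis $\{sp_\mu(x)\, sp_\nu(y)\}$.

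The main obstacle will be showing that the resulting quadruple sum over $(\delta, \gamma, \epsilon_1, \epsilon_2)$ with alternating signs collapses to the single sum $\sum_{\gamma,\delta} c^\gamma_{\mu\nu}\, c^\lambda_{\gamma(2\delta)'}$ in the statement. The cancellation is not formal: it reflects the fact that the two "correction" layers---one from expanding $sp_\lambda$ as Schurs, one from the back-substitution on each factor---combine to leave exactly one $(2\delta)'$ correction at the end. Verifying this amounts to a combinatorial identity on Littlewood--Richardson coefficients, most cleanly established by a sign-reversing involution on tuples of Littlewood--Richardson tableaux (matching a $(2\epsilon_i)'$-piece with a compensating column in $(2\delta)'$). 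A parallel alternative route, closer in spirit to Howe--Tan--Willenbring, uses the seesaw dual pair $(\Sp_{2(m+n)},\Orth_{2k})$ versus $(\Sp_{2m}\times\Sp_{2n},\Orth_{2k})$ acting on $\QQ^{2(m+n)}\otimes\QQ^{2k}$: the branching problem then reduces to a tensor product decomposition inside $\Orth_{2k}$, where the columns-even condition on $(2\delta)'$ appears naturally as the constraint distinguishing orthogonal from symplectic invariants.
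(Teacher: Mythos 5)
The paper itself offers no proof of this statement---\autoref{thm:GLstable} and \autoref{thm:Spstable} are quoted directly from Howe--Tan--Willenbring \cite[2.2.1, 2.2.3]{HTW}---so any complete argument is necessarily ``different from the paper.'' Judged on its own terms, however, your outline has a genuine gap: the identity you call Koike's formula is misstated. The inverse of Littlewood's expansion $s_\rho=\sum_{\mu,\epsilon}c^{\rho}_{\mu(2\epsilon)'}\,sp_\mu$ is \emph{not} $sp_\lambda=\sum_\delta(-1)^{|\delta|}s_{\lambda/(2\delta)'}$; the correct skew expansion (Littlewood, Koike--Terada) is $sp_\lambda=\sum_{\alpha}(-1)^{|\alpha|/2}s_{\lambda/\alpha}$, where $\alpha$ runs over the partitions whose Frobenius symbol has the shape $(a_1,\ldots,a_r\mid a_1{+}1,\ldots,a_r{+}1)$, not over the even-column partitions $(2\delta)'$. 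These are genuinely different transition matrices: in degree $4$ the inverse of $1+s_{(1,1)}+s_{(2,2)}+s_{(1,1,1,1)}+\cdots$ contains $+s_{(2,1,1)}$ and neither $s_{(2,2)}$ nor $s_{(1,1,1,1)}$, whereas your formula would contribute $+\bigl(s_{(2,2)}+s_{(1,1,1,1)}\bigr)$. With the formula as written, the quadruple sum you set up cannot collapse to $\sum_{\gamma,\delta}c^{\gamma}_{\mu\nu}c^{\lambda}_{\gamma(2\delta)'}$.

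Even after correcting the expansion, the entire content of the theorem sits in the cancellation step you defer: you assert that a sign-reversing involution on tuples of Littlewood--Richardson tableaux collapses the signed sum over $(\alpha,\gamma,\epsilon_1,\epsilon_2)$, but no involution is exhibited, and this is exactly where the even-column condition on $(2\delta)'$ must emerge---so the proposal assumes the combinatorial core of the theorem rather than proving it. (The translation into the ring of symmetric functions is fine: every partition occurring in the corrected expansion is contained in $\lambda$ and hence has at most $\min(m,n)$ rows, so no modification rules interfere when specializing to $R(\Sp_{2m}\QQ)\otimes R(\Sp_{2n}\QQ)$; but this needs to be said, since it is where the hypothesis on $\ell(\lambda),\ell(\mu),\ell(\nu)$ enters.) The seesaw route you sketch at the end is essentially the actual proof of Howe--Tan--Willenbring, i.e.\ the source the paper cites; either carrying that out in detail or simply citing it, as the paper does, is the cleaner path.
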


\subsection{Modification rules for $\GL_n\QQ$}


In order to state the branching rules more generally, we need modification rules. To that effect we will use Koike and Tereda's theory of universal characters introduced in \cite{KT,Ko}.

Let 
\[ \Lambda_x = \varprojlim_n \ZZ[x_1, \dots, x_n]^{\mathfrak S_n}\]
denote the ring of symmetric functions and
\[ \Lambda_{xy} = \Lambda_x \otimes \Lambda_y.\]
The Schur functions \[\{s_\lambda(x)\}_{\lambda \text{ a partition}}\] form a basis of the free abelian group $\Lambda_x$ and thus the tensor products \[\{s_\lambda(x)\otimes s_\mu(y)\}_{\lambda,\mu\text{ partitions}}\] form a basis of $\Lambda_{xy}$. 
Koike \cite[Sec 2]{Ko} defines a ring homomorphism
\[ \tilde\pi_n \colon \Lambda_{xy} \longrightarrow R(\GL_n\QQ)\]
to the representation ring  $R(\GL_n\QQ)$ of the rational representations of $\GL_n\QQ$. We denote
\[ \modGL_n(\lambda^+,\lambda^-) = \tilde\pi_n(s_{\lambda^+}(x)\otimes s_{\lambda^-}(y)).\]

The ``$\mathrm{mod}$'' stands for modification and and the laws that govern these ``modified representations'' are known under the name \emph{modification rules}.

The idea is that the ring structure of $\Lambda_{xy}$ controls the branching rules of the rational representations of the general linear group. By design
\[ \modGL_n(\lambda^+,\lambda^-) = \GL_n(\lambda^+,\lambda^-) \] 
if $\ell(\lambda^+)+\ell(\lambda^-) \le n$. In general, $\modGL_n(\lambda^+,\lambda^-)$ is zero or a virtual representation $\pm\GL_n(\mu^+,\mu^-)$ for some partitions $\mu^+,\mu^-$ with $\ell(\mu^+)+\ell(\mu^-) \le n$. These two statements are precisely \cite[Prop 2.2]{Ko}.

Sam--Snowden--Weyman  \cite[Sec 5.4]{SSW} give the following combinatorial construction of the modification rules. A \emph{border strip} is a skew Young diagram that does not contain a $2\times 2$ square. Its length is the number of boxes it contains. Assume $\ell(\lambda^+)+\ell(\lambda^-) > n$. Let, if they exist, $R_{\lambda^+}$ and $R_{\lambda^-}$ be the connected border strips of length $\ell(\lambda^+)+\ell(\lambda^-) - n-1$ in $\lambda^+$ and $\lambda^-$ containing the first box in the last row, respectively. If $\lambda^+\setminus R_{\lambda^+}$ and $\lambda^-\setminus R_{\lambda^-}$ are both Young diagrams again then
\[ \modGL_n(\lambda^+,\lambda^-) = (-1)^{c( R_{\lambda^+})+c( R_{\lambda^-})-1}\cdot \modGL_n(\lambda^+\setminus R_{\lambda^+},\lambda^-\setminus R_{\lambda^-}),\]
where $c(R)$ denotes the number of columns the skew diagram $R$ occupies. If $R_{\lambda^+}$  or $R_{\lambda^-}$  do not exist or are empty, or $\lambda^+\setminus R_{\lambda^+}$ or $\lambda^-\setminus R_{\lambda^-}$ are not Young diagrams then
\[ \modGL_n(\lambda^+,\lambda^-) = 0.\]
In this construction 
\[ \ell(\lambda^+\setminus R_{\lambda^+}) + \ell(\lambda^-\setminus R_{\lambda^-}) < \ell(\lambda^+)+\ell(\lambda^-).\]
Therefore it terminates after finitely many steps. 

We reproduce \cite[Ex 5.17]{SSW}. Let $n=3$, $\lambda^+ = ( 4, 3, 2, 2)$ and $\lambda^-=  (5, 2, 2, 1, 1)$. Then $\ell(\lambda^+)+\ell(\lambda^-) = 9$ and the border strips of length $9-3-1 = 5$ are marked by bullet points in following diagrams:
\[ \lambda^+ = \young(\ \ \ \ ,\ \bullet \bullet ,\ \bullet ,\bullet \bullet ) \quad\quad \lambda^- = \young(\ \ \ \ \ ,\ \bullet,\bullet \bullet,\bullet,\bullet)\]
Because $R_{\lambda^+}$ occupies $3$ columns and $R_{\lambda^-}$ occupies $4$ columns,
\[ \modGL_3\left(\tiny\yng(4,3,2,2),\tiny\yng(5,2,2,1,1)\right) = \modGL_3\left(\tiny\yng(4,1,1),\tiny\yng(5,1)\right).\]
We again mark the border strips of length $5-3-1=1$ by bullet points:
\[  \young(\ \ \ \ ,\   ,\bullet  ) \quad\quad  \young(\ \ \ \ \ ,\bullet )\]
In the end we get
\[ \modGL_3\left(\tiny\yng(4,3,2,2),\tiny\yng(5,2,2,1,1)\right) = - \modGL_3\left(\tiny\yng(4,1),\tiny\yng(5)\right) = -\GL_3\left(\tiny\yng(4,1),\tiny\yng(5)\right).\]

We summarize all that we will need in the later discussion in the following Proposition.

\begin{prop}[Modification rules for $\GL_n\QQ$]\label{prop:modGL}
Let $\lambda^+,\lambda^-$ be partitions. Then:
\begin{enumerate}
\item \label{item:stablemodGL} $\modGL_n(\lambda^+,\lambda^-) = \GL_n(\lambda^+,\lambda^-)$ if $\ell(\lambda^+)+\ell(\lambda^-) \le n$.
\item $\modGL_n(\lambda^+,\lambda^-)$ is zero or a virtual representation $\pm\GL_n(\mu^+,\mu^-)$ for some partitions $\mu^+,\mu^-$ with $\ell(\mu^+)+\ell(\mu^-) \le n$.
\item \label{item:modincludingGL}If $\modGL_n(\lambda^+,\lambda^-) = \pm\GL_n(\mu^+, \mu^-)$ then $\mu^+,\mu^-$ are contained in $\lambda^+,\lambda^-$, respectively.
\end{enumerate}
\end{prop}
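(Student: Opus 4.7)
The plan is to observe that the first two claims are essentially already present in the excerpt's setup, while the third follows by a short induction on the recursive procedure from Sam--Snowden--Weyman. For part (a), the definition of $\modGL_n$ via $\tilde\pi_n$ and the cited statement that $\tilde\pi_n$ sends $s_{\lambda^+}(x)\otimes s_{\lambda^-}(y)$ to $\GL_n(\lambda^+,\lambda^-)$ in the stable range is exactly the content of \cite[Prop 2.2]{Ko}, so there is nothing further to do. For part (b), the same Koike proposition guarantees that the image under $\tilde\pi_n$ is either zero or $\pm \GL_n(\mu^+,\mu^-)$ with $\ell(\mu^+)+\ell(\mu^-)\le n$; alternatively, it can be read off from the SSW recursion because that recursion either stops with zero or strictly decreases $\ell(\lambda^+)+\ell(\lambda^-)$ at each step and therefore must eventually land in the stable range where part (a) applies.

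The real content is part (c). I would prove it by strong induction on the quantity $\ell(\lambda^+)+\ell(\lambda^-)$. The base case $\ell(\lambda^+)+\ell(\lambda^-)\le n$ is immediate from part (a): the partitions $\mu^\pm=\lambda^\pm$ are trivially contained in $\lambda^\pm$. For the inductive step we may assume $\ell(\lambda^+)+\ell(\lambda^-)>n$ and that $\modGL_n(\lambda^+,\lambda^-)$ is nonzero, so by the SSW construction the border strips $R_{\lambda^+}\subseteq \lambda^+$ and $R_{\lambda^-}\subseteq \lambda^-$ exist and $\lambda^\pm\setminus R_{\lambda^\pm}$ are again Young diagrams. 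Then
\[ \modGL_n(\lambda^+,\lambda^-) = \pm\modGL_n(\lambda^+\setminus R_{\lambda^+},\lambda^-\setminus R_{\lambda^-}),\]
and since $\ell(\lambda^+\setminus R_{\lambda^+})+\ell(\lambda^-\setminus R_{\lambda^-})<\ell(\lambda^+)+\ell(\lambda^-)$ the induction hypothesis applies to the right-hand side.

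Thus if $\modGL_n(\lambda^+,\lambda^-)=\pm\GL_n(\mu^+,\mu^-)$, the same irreducible arises on the right with the same sign, and by induction $\mu^\pm\subseteq \lambda^\pm\setminus R_{\lambda^\pm}$. Because removing a border strip produces a subdiagram, $\lambda^\pm\setminus R_{\lambda^\pm}\subseteq \lambda^\pm$, and transitivity of containment gives $\mu^\pm\subseteq \lambda^\pm$, completing the induction. The only potential obstacle is making sure the recursive formula really does end in the stable range with a well-defined $(\mu^+,\mu^-)$ rather than collapsing to zero partway through; this is handled by the hypothesis that $\modGL_n(\lambda^+,\lambda^-)\neq 0$, which forces every intermediate step in the recursion to produce a nonzero summand, so the recursion terminates at a stable pair to which part (a) applies.
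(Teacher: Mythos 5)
Your proposal is correct and matches the paper's treatment: the paper also takes parts (a) and (b) directly from Koike's Proposition 2.2 and obtains part (c) from the Sam--Snowden--Weyman border-strip recursion, whose each step removes a border strip (hence passes to a subdiagram) and strictly decreases $\ell(\lambda^+)+\ell(\lambda^-)$ until the stable range is reached. Your explicit induction merely spells out what the paper leaves implicit, and the minor remark about the sign being preserved is irrelevant since the claim is only up to sign.
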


\subsection{Modification rules for $\Sp_{2n}\QQ$}

Koike--Tereda \cite[Sec 2.1]{KT} denote by
\[ \{\chi_{\Sp}(\lambda)(x)\}_{\lambda\text{ a partition}}\]
another basis of $\Lambda_x$. They define in \cite[Sec 2.2]{KT} a ring homomorphism
\[ \pi_{\Sp_{2n}}\colon \Lambda_x \longrightarrow R(\Sp_{2n}\QQ)\]
to the representation ring  $R(\Sp_{2n}\QQ)$ of the rational representations of $\Sp_{2n}\QQ$. We denote 
\[ \modSp_{2n}(\lambda) = \pi_{\Sp_{2n}}(\chi_{\Sp}(\lambda)(x)).\]

Similar to the modification rules of the general linear group
\[\modSp_{2n}(\lambda) = \Sp_{2n}(\lambda)\]
if $\ell(\lambda) \le n$ and otherwise $\modSp_{2n}(\lambda)$ is zero or a virtual representation of the form $\pm\Sp_{2n}(\mu)$ for some partition $\mu$ with $\ell(\mu)\le n$. Koike--Tereda prove this in \cite[Prop 2.2.1(1)+Prop 2.4.1(ii)]{KT}.

Sam--Snowden--Weyman  \cite[Sec 3.4]{SSW} give the following combinatorial construction of the modification rules. Assume $\ell(\lambda)>n$. Let, if it exists, $R_{\lambda}$ be the connected border strip of length $2(\ell(\lambda) - n-1)$ in $\lambda$ containing the first box in the last row. If $\lambda\setminus R_\lambda$ is a Young diagram again then
\[ \modSp_{2n}(\lambda) = (-1)^{c(R_\lambda)}\cdot \modSp_{2n}(\lambda\setminus R_{\lambda}).\]
If $R_\lambda$ does not exist or is empty, or $\lambda\setminus R_\lambda$ is not a Young diagram then
\[ \modSp_{2n}(\lambda) = 0.\]
Again
\[ \ell(\lambda\setminus R_\lambda) < \ell(\lambda)\]
implies that this procedure terminates after finitely many steps.

The following example is \cite[Ex 3.20]{SSW}. Let $n=2$ and consider the partition $\lambda = (6, 5, 4, 4, 3, 3, 2)$. We give the border strips of all steps in the following picture:
\[  {{\young(\ \ \ \ \ \ ,\ \ \ \ \ ,\ \ \ \bullet ,\ \ \bullet \bullet ,\ \ \bullet ,\ \bullet \bullet ,\bullet \bullet )}\rightsquigarrow \quad {\young(\ \ \ \ \ \ ,\ \ \ \ \ ,\ \bullet \bullet  ,\ \bullet   ,\bullet \bullet  ,\bullet)}\rightsquigarrow \quad {\young(\ \ \ \ \ \ ,\ \ \ \ \ ,\bullet    ,\bullet   )}\rightsquigarrow \quad {\young(\ \ \ \ \ \ ,\ \ \ \ \        )}}\] 
Therefore
\[ \modSp_4\left(\tiny\yng(6,5,4,4,3,3,2)\right) = -\modSp_4\left(\tiny\yng(6,5,3,2,2,1)\right) = -\modSp_4\left(\tiny\yng(6,5,1,1)\right) = -\modSp_4\left(\tiny\yng(6,5)\right). \]

We summarize all that we will need in the later discussion in the following Proposition.

\begin{prop}[Modification rules for $\Sp_{2n}\QQ$]\label{prop:modSp}
Let $\lambda$ be a partition. Then:
\begin{enumerate}
\item \label{item:stablemodSp}$\modSp_{2n}(\lambda) = \Sp_{2n}(\lambda)$ if $\ell(\lambda) \le n$.
\item $\modSp_{2n}(\lambda)$ is zero or a virtual representation $\pm\Sp_{2n}(\mu)$ for some partition $\mu$ with $\ell(\mu)\le n$.
\item \label{item:modincludingSp} If $\modSp_{2n}(\lambda) = \pm \Sp_{2n}(\mu)$ then $\mu$ is contained in $\lambda$.
\end{enumerate}
\end{prop}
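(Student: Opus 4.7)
The plan is to deduce parts (1) and (2) directly from the literature, and to derive part (3) by inspecting the combinatorial recursion of Sam--Snowden--Weyman.

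First I would record that parts (\ref{item:stablemodSp}) and (2) are stated essentially verbatim in Koike--Tereda \cite[Prop 2.2.1(1), Prop 2.4.1(ii)]{KT}. Specifically, $\pi_{\Sp_{2n}}$ is defined so as to send $\chi_\Sp(\lambda)(x)$ to the character of $\Sp_{2n}(\lambda)$ whenever $\ell(\lambda) \le n$, which gives (\ref{item:stablemodSp}); and Koike--Tereda's determinantal formula shows that for $\ell(\lambda) > n$, the element $\pi_{\Sp_{2n}}(\chi_\Sp(\lambda)(x))$ can be rewritten, via the Weyl denominator identity, as a signed basis element $\pm\Sp_{2n}(\mu)$ for some $\mu$ with $\ell(\mu) \le n$, or as zero. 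This gives (2).

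For part (\ref{item:modincludingSp}), I would invoke the combinatorial algorithm of Sam--Snowden--Weyman \cite[Sec 3.4]{SSW} recalled immediately before the proposition. The algorithm computes $\modSp_{2n}(\lambda)$ by iteratively passing from a partition $\lambda$ with $\ell(\lambda) > n$ to the partition $\lambda \setminus R_\lambda$, where $R_\lambda$ is a (possibly empty) border strip, possibly picking up a sign along the way, and terminating in $\pm\Sp_{2n}(\mu)$ (or in $0$) once the length condition $\ell(\mu)\le n$ is met. The key observation is that at every step, $\lambda \setminus R_\lambda$ is obtained by \emph{removing} boxes from $\lambda$, so it is a subdiagram of $\lambda$. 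Since the containment relation on Young diagrams is transitive, an induction on the number of recursive steps shows that the final partition $\mu$ is a subdiagram of the original $\lambda$.

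There is no real obstacle here beyond correctly unpacking the cited results; the only delicate point is checking that the Sam--Snowden--Weyman recursion genuinely produces the same virtual representation as Koike--Tereda's $\modSp_{2n}$, so that combining the two sources is legitimate. That equivalence is proved in \cite[Sec 3.4]{SSW}, so I would simply cite it. Part (\ref{item:modincludingSp}) then follows, as each recursive step shrinks the diagram, giving $\mu \subseteq \lambda$ at termination.
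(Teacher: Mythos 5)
Your proposal matches the paper's own justification: parts (1) and (2) are quoted from Koike--Tereda \cite[Prop 2.2.1(1), Prop 2.4.1(ii)]{KT}, and part (3) is read off from the Sam--Snowden--Weyman border-strip recursion, in which each step only removes boxes, so the terminal partition is contained in the original one. The argument is correct and essentially identical to the paper's.
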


\subsection{Branching rules}

The main technical tool of this paper will be branching rules of rational representations. We will need formulas for inner and outer tensor products and a stability statement for plethysms. These are corollaries of the modification rules.

%
%
%

\begin{thm}[Koike {\cite[Thm 2.4]{Ko}}]\label{thm:innerGL}
Let  $\mu^+,\mu^-,$ $\nu^+,\nu^-$ with $\ell(\mu^+)+\ell(\mu^-), \ell(\nu^+)+\ell(\nu^-) \le n$. Then
\begin{multline*} \GL_n(\mu^+,\mu^-)\otimes \GL_n(\nu^+,\nu^-) \cong\\  \bigoplus_{\lambda^+, \lambda^-} \modGL_{n}(\lambda^+, \lambda^-)^{\oplus \sum\limits_{\alpha^+,\alpha^-, \beta^+, \beta^-, \gamma, \delta}c_{\alpha^+\beta^+}^{\lambda^+}c_{\alpha^+\gamma}^{\mu^+}c_{\beta^+\delta}^{\nu^+}c_{\alpha^-\beta^-}^{\lambda^-}c_{\alpha^-\delta}^{\mu^-}c_{\beta^-\gamma}^{\nu^-}}.   \end{multline*}
\end{thm}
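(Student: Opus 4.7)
The plan is to establish the formula by lifting it to a universal product identity in $\Lambda_{xy}$ and then specializing via Koike's ring homomorphism $\tilde\pi_n$; this is essentially Koike's argument in \cite{Ko}.

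The first step is to work in the ring of universal characters. By construction, $\tilde\pi_n$ sends the element $s_{\lambda^+}(x)\otimes s_{\lambda^-}(y)$ to $\modGL_n(\lambda^+,\lambda^-)$, and since $\tilde\pi_n$ is a ring homomorphism, the tensor product on the representation side is controlled by multiplication in $\Lambda_{xy}$. The problem therefore reduces to expanding the product of the two universal characters associated to $(\mu^+,\mu^-)$ and $(\nu^+,\nu^-)$ in the basis indexed by pairs of partitions, and then applying $\tilde\pi_n$.

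The combinatorial core is to identify the multiplicities in this expansion with the sum $\sum c_{\alpha^+\beta^+}^{\lambda^+}c_{\alpha^+\gamma}^{\mu^+}c_{\beta^+\delta}^{\nu^+}c_{\alpha^-\beta^-}^{\lambda^-}c_{\alpha^-\delta}^{\mu^-}c_{\beta^-\gamma}^{\nu^-}$. My approach would be to use a Jacobi--Trudi-type expansion of the universal characters into signed combinations of products $s_\alpha(x)\cdot s_\beta(y)$ of ordinary Schur functions in each alphabet, then to apply the classical Littlewood--Richardson rule separately in $\Lambda_x$ and $\Lambda_y$. After collecting and reindexing the resulting sixfold convolution, one recovers the stated product of six Littlewood--Richardson coefficients; here $\alpha^\pm, \beta^\pm$ record how $\mu^\pm, \nu^\pm$ split in each alphabet, while $\gamma,\delta$ are the bridging terms that couple the $+$ and $-$ partitions via the contractions relating polynomial and dual representations.

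Applying $\tilde\pi_n$ to the universal identity produces the formula as an identity in $R(\GL_n\QQ)$ of virtual representations. By the hypothesis $\ell(\mu^+)+\ell(\mu^-)\le n$ and $\ell(\nu^+)+\ell(\nu^-)\le n$, the left-hand side is an honest tensor product of irreducibles via the stable identification $\modGL_n=\GL_n$ from \autoref{prop:modGL}, so the theorem follows. I expect the hardest part to be the combinatorial identity for the universal character product: the interactions between $\Lambda_x$ and $\Lambda_y$ induced by the contraction maps are precisely what make the two additional bridging indices $\gamma$ and $\delta$ appear beyond the obvious four, and carefully tracking these through the Jacobi--Trudi expansion is the bulk of the work, as in Koike's original proof.
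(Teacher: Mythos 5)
The paper offers no proof of this statement: it is imported verbatim from Koike \cite[Thm 2.4]{Ko} (unlike \autoref{thm:outerSp}, the one branching rule the paper does prove itself), so the only meaningful comparison is with Koike's argument, and your outline is a sketch of that argument rather than an alternative to anything in the paper.

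Two substantive issues with the sketch as written. First, your opening reduction, taken literally, proves the wrong formula: if $\tilde\pi_n$ is a ring homomorphism for the ordinary product on $\Lambda_{xy}=\Lambda_x\otimes\Lambda_y$ and sends the basis vector $s_{\mu^+}(x)\otimes s_{\mu^-}(y)$ to $\modGL_n(\mu^+,\mu^-)$, then multiplicativity immediately yields multiplicities $c^{\lambda^+}_{\mu^+\nu^+}c^{\lambda^-}_{\mu^-\nu^-}$ and nothing more, which is false --- already for $\mu^+=\nu^-=(1)$, $\mu^-=\nu^+=\emptyset$ the left-hand side is $V_n\otimes V_n^*$, which contains the trivial representation $\GL_n(\emptyset,\emptyset)$, while the naive expansion produces only $\modGL_n((1),(1))$. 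In Koike's actual construction the element of $\Lambda_{xy}$ that specializes to $\modGL_n(\lambda^+,\lambda^-)$ is the \emph{rational universal character}, a signed determinantal (Jacobi--Trudi type) combination of the products $s_\alpha(x)\otimes s_\beta(y)$; these form a different basis of $\Lambda_{xy}$, and the naive image of $s_{\lambda^+}(x)\otimes s_{\lambda^-}(y)$ is instead the class of $\GL_n(\lambda^+)\otimes\GL_n(\lambda^-)^*$. (The paper's displayed convention is loose on exactly this point; compare the symplectic case, where $\chi_{\Sp}(\lambda)$ is explicitly introduced as ``another basis''.) Your later appeal to a Jacobi--Trudi expansion shows you sense this, but the first and last paragraphs must be recast in terms of the universal-character basis, otherwise the argument short-circuits to the two-coefficient formula. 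Second, the genuine content of the theorem --- that the structure constants of this universal basis, equivalently of stable tensor products of rational irreducibles, are the sixfold Littlewood--Richardson convolution with bridging indices $\gamma,\delta$ recording the contractions between $\mu^+,\nu^-$ and $\mu^-,\nu^+$ --- is precisely the identity you defer to ``carefully tracking''. So as a self-contained proof the proposal is incomplete (the key identity is assumed, not derived), while as a reduction to the literature it amounts to what the paper already does, namely cite Koike.
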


\begin{thm}[Koike {\cite[Thm 3.1]{Ko}}]\label{thm:innerSp}
Let  $\mu,\nu$ with $\ell(\mu), \ell(\nu) \le n$. Then
\[ \Sp_{2n}(\mu)\otimes \Sp_{2n}(\nu) \cong  \bigoplus_{\lambda} \modSp_{2n}(\lambda)^{\oplus \sum\limits_{\alpha, \beta, \gamma}c_{\alpha\beta}^{\lambda}c_{\alpha\gamma}^{\mu}c_{\beta\gamma}^{\nu}}.  \]
\end{thm}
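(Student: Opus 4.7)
The strategy is to lift the identity to Koike--Tereda's ring $\Lambda_x$ of universal symplectic characters. Since $\pi_{\Sp_{2n}}\colon \Lambda_x \to R(\Sp_{2n}\QQ)$ is a ring homomorphism sending $\chi_{\Sp}(\lambda)$ to $\modSp_{2n}(\lambda)$, it suffices to prove the universal Newell--Littlewood identity
\[ \chi_{\Sp}(\mu)\cdot\chi_{\Sp}(\nu) \;=\; \sum_{\lambda}\Bigl(\sum_{\alpha,\beta,\gamma} c^{\lambda}_{\alpha\beta}\,c^{\mu}_{\alpha\gamma}\,c^{\nu}_{\beta\gamma}\Bigr)\chi_{\Sp}(\lambda) \]
in $\Lambda_x$. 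Applying $\pi_{\Sp_{2n}}$ and using Proposition~\ref{prop:modSp}\,(a) to identify $\modSp_{2n}(\mu)=\Sp_{2n}(\mu)$ and $\modSp_{2n}(\nu)=\Sp_{2n}(\nu)$ under the length hypothesis on $\mu,\nu$ then yields the theorem. The real content is thus this identity inside $\Lambda_x$.

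To verify it, I would first reduce to a check in the stable range. Because $c^{\lambda}_{\alpha\beta}$ vanishes unless $|\alpha|+|\beta|=|\lambda|$ (and similarly for the other two factors), the right-hand sum involves only finitely many $\lambda$, each satisfying $|\lambda|\le |\mu|+|\nu|$. Choose $N$ large enough that every such $\lambda$ has $\ell(\lambda)\le N$. By Proposition~\ref{prop:modSp}\,(a), the image under $\pi_{\Sp_{2N}}$ of each $\chi_{\Sp}(\lambda)$ that appears is the genuine irreducible $\Sp_{2N}(\lambda)$, and these irreducibles are linearly independent in $R(\Sp_{2N}\QQ)$. Hence the identity in $\Lambda_x$ is equivalent to the honest decomposition
\[ \Sp_{2N}(\mu)\otimes\Sp_{2N}(\nu) \;\cong\; \bigoplus_{\lambda} \Sp_{2N}(\lambda)^{\oplus \sum_{\alpha,\beta,\gamma} c^{\lambda}_{\alpha\beta}\,c^{\mu}_{\alpha\gamma}\,c^{\nu}_{\beta\gamma}} \]
of honest $\Sp_{2N}\QQ$-representations for all $N\gg 0$.

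To compute this stable inner tensor product I would use the see-saw pair $(\Sp_{2N}^{\Delta},\,\Sp_{2N}\QQ\times\Sp_{2N}\QQ)$ inside $\Sp_{4N}\QQ$, relating the multiplicity of $\Sp_{2N}(\lambda)$ in $\Sp_{2N}(\mu)\otimes\Sp_{2N}(\nu)$ to the outer branching multiplicities governed by Theorem~\ref{thm:Spstable}. A Frobenius reciprocity calculation expresses the multiplicity as a sum over auxiliary partitions $\gamma$ and $\delta$ of products of the form $c^{\gamma}_{\mu\nu}\,c^{\lambda}_{\gamma(2\delta)'}$ paired against a dual contribution; repeated application of the associativity and symmetry of the Littlewood--Richardson coefficients then rewrites it as $\sum c^{\lambda}_{\alpha\beta}\,c^{\mu}_{\alpha\gamma}\,c^{\nu}_{\beta\gamma}$.

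I expect the main obstacle to be precisely the cancellation of the auxiliary partitions $(2\delta)'$ appearing in Theorem~\ref{thm:Spstable}: the even-column corrections must be fully absorbed into the Littlewood--Richardson rearrangement without leaving residual terms, and this requires careful bookkeeping that crucially uses the hypothesis $\ell(\lambda)\le N$. An alternative route that sidesteps the see-saw is to expand $\chi_{\Sp}(\lambda)$ as a $\ZZ$-linear combination of Schur functions via Littlewood's classical formula and compute the product in $\Lambda_x$ directly from $s_{\alpha}s_{\beta}=\sum_{\lambda}c^{\lambda}_{\alpha\beta}s_{\lambda}$; this trades the representation-theoretic see-saw for a symmetric-function manipulation but establishes the universal identity without reference to any particular $n$.
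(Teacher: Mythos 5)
The paper does not actually prove this statement: it is quoted verbatim from Koike, just as \autoref{thm:innerGL} and \autoref{thm:outerGL} are. Your outer framework is sound and is exactly in the spirit of how the paper handles the one branching result it does prove (\autoref{thm:outerSp}): lift to $\Lambda_x$, note that $\{\chi_{\Sp}(\lambda)\}$ is a basis, use that the difference of the two sides of the universal identity is a finite integer combination of $\chi_{\Sp}(\kappa)$ with $|\kappa|\le|\mu|+|\nu|$, and kill it by applying $\pi_{\Sp_{2N}}$ for a single $N\ge|\mu|+|\nu|$, where \autoref{prop:modSp}(a) and linear independence of the honest irreducibles apply; the length hypothesis on $\mu,\nu$ is then used only to identify the left-hand side under $\pi_{\Sp_{2n}}$. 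All of that is fine.

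The gap is in the step that carries the entire content, namely the stable decomposition of $\Sp_{2N}(\mu)\otimes\Sp_{2N}(\nu)$. The ``see-saw pair $(\Sp_{2N}^{\Delta},\,\Sp_{2N}\QQ\times\Sp_{2N}\QQ)$ inside $\Sp_{4N}\QQ$'' is not a see-saw (or dual) pair in any usable sense, and there is no Frobenius reciprocity relating restriction along the diagonal $\Sp_{2N}\QQ\hookrightarrow\Sp_{2N}\QQ\times\Sp_{2N}\QQ$ (which is what the inner tensor product is) to the branching $\Sp_{4N}\QQ\downarrow\Sp_{2N}\QQ\times\Sp_{2N}\QQ$ governed by \autoref{thm:Spstable}; restriction is adjoint to induction, not to another restriction, so knowing how $\Sp_{4N}(\lambda)$ decomposes under $\Sp_{2N}\QQ\times\Sp_{2N}\QQ$ gives no direct handle on $[\Sp_{2N}(\mu)\otimes\Sp_{2N}(\nu):\Sp_{2N}(\lambda)]$. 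The genuine see-saw derivation (as in Howe--Tan--Willenbring) runs through the dual pairs $(\Sp_{2n},\Orth_k)$ in an oscillator representation, with the see-saw partner being $\Orth_{k_1}\times\Orth_{k_2}\subset\Orth_{k_1+k_2}$, so the auxiliary partitions that have to cancel are the even-row ones from orthogonal branching, not the even-column $(2\delta)'$ of \autoref{thm:Spstable}; as written, your ``bookkeeping'' step cannot be carried out. Your alternative route --- expanding $\chi_{\Sp}(\lambda)$ in Schur functions via Littlewood's formula and its inverse and computing the product with the Littlewood--Richardson rule --- is the correct one and is essentially Koike's own argument, but in your write-up it is only gestured at, so the identity that is the actual theorem is never established.
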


\begin{thm}[Koike {\cite[Prop 2.6]{Ko}}]\label{thm:outerGL}
Let  $\lambda^+,\lambda^-$ with $\ell(\lambda^+)+\ell(\lambda^-) \le m+n$. Then
\begin{multline*} \Res_{\GL_m\QQ\times\GL_n\QQ}^{\GL_{m+n}\QQ} \GL_{m+n} (\lambda^+,\lambda^-) \cong\\  \bigoplus_{\mu^+,\mu^-,\nu^+,\nu^-} \Big(\modGL_{m}(\mu^+,\mu^-) \otimes \modGL_{n}(\nu^+,\nu^-)\Big)^{\oplus \sum\limits_{\gamma^+, \gamma^-, \delta}c_{\mu^+\delta}^{\gamma^+}c_{\mu^- \delta}^{\gamma^-}c_{\gamma^+ \nu^+}^{\lambda^+}c_{\gamma^- \nu^+}^{\lambda^-}}.   \end{multline*}
\end{thm}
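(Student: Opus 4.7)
The plan is to verify the branching formula first in a large stable range, where all modified representations collapse to genuine irreducibles and \autoref{thm:GLstable} applies, and then transport it to arbitrary $m,n$ with $\ell(\lambda^+) + \ell(\lambda^-) \le m+n$ by working in Koike's universal character ring $\Lambda_{xy}$ and using the ring homomorphism $\tilde\pi \colon \Lambda_{xy} \to R(\GL_n\QQ)$.

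When $\ell(\mu^+) + \ell(\mu^-) \le m$ and $\ell(\nu^+) + \ell(\nu^-) \le n$, \autoref{thm:GLstable} computes the multiplicity of $\GL_m(\mu^+,\mu^-) \otimes \GL_n(\nu^+,\nu^-)$ in the restriction as $\sum_{\gamma^+,\gamma^-,\delta} c^{\gamma^+}_{\mu^+\nu^+} c^{\gamma^-}_{\mu^-\nu^-} c^{\lambda^+}_{\gamma^+\delta} c^{\lambda^-}_{\gamma^-\delta}$. Matching this with the sum in the theorem is then a purely combinatorial identity: since $\sum_\gamma c^\gamma_{\alpha\beta} c^\lambda_{\gamma\sigma}$ equals the coefficient of $s_\lambda$ in the Schur-function product $s_\alpha s_\beta s_\sigma$, both expressions reduce, summand-by-summand in $\delta$, to the product of the coefficients of $s_{\lambda^+}$ in $s_{\mu^+} s_{\nu^+} s_\delta$ and of $s_{\lambda^-}$ in $s_{\mu^-} s_{\nu^-} s_\delta$, with commutativity of Schur multiplication closing the gap.

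To extend to arbitrary $(\mu^\pm,\nu^\pm)$, I would package the restriction as a universal, $n$-independent comultiplication $\Delta \colon \Lambda_{xy} \to \Lambda_{xy} \otimes \Lambda_{xy}$ fitting into a commutative diagram $\Res \circ \tilde\pi_{m+n} = (\tilde\pi_m \otimes \tilde\pi_n) \circ \Delta$. Such a $\Delta$ should exist because the characters of block-diagonal matrices factor and this splitting of variables makes sense universally on $\Lambda_{xy}$, independent of $n$. The expansion of $\Delta(s_{\lambda^+}(x) \otimes s_{\lambda^-}(y))$ in the tensor-product basis of $\Lambda_{xy} \otimes \Lambda_{xy}$ is then determined by its values for $m, n$ in the deep stable range, and the previous paragraph shows these are exactly the asserted coefficients. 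Applying the commutative diagram for any $m, n$ with $\ell(\lambda^+) + \ell(\lambda^-) \le m+n$ produces the claimed formula, with the modifications $\modGL$ appearing on the right exactly when $(\mu^\pm, \nu^\pm)$ leaves its stable range.

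The main obstacle is establishing the existence and $n$-independence of $\Delta$ on $\Lambda_{xy}$; this is the genuinely nontrivial input from Koike--Terada's universal character formalism, as the comultiplication is not simply the Littlewood--Richardson coproduct applied independently in the $x$ and $y$ variables (which would miss the shared $\delta$ coupling the positive and negative parts). Granted $\Delta$, the rest is the stable branching rule together with routine manipulations of Littlewood--Richardson coefficients.
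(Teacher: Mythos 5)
Your proposal is correct and follows essentially the same route as the paper: the statement is quoted from Koike's Prop.~2.6, and the paper's own proof of the symplectic analogue (\autoref{thm:outerSp}) uses exactly your strategy --- a universal, $n$-independent splitting map on the character ring, coefficients pinned down in a deep stable range via the stable branching rule (\autoref{thm:GLstable} in your case), and a commutative diagram with $\tilde\pi_{m}\otimes\tilde\pi_n$ to transport the identity to arbitrary $m,n$, with $\modGL$ absorbing the unstable cases. Your reduction of the coefficient comparison to associativity/commutativity of Schur multiplication (the shared $\delta$ coupling) is the right bookkeeping and matches the intended formula, with $c^{\lambda^-}_{\gamma^-\nu^+}$ in the displayed exponent read as $c^{\lambda^-}_{\gamma^-\nu^-}$.
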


\begin{thm}\label{thm:outerSp}
Let $\lambda$ be a partition with $\ell(\lambda)\le m+n$. Then
\[ \Res_{\Sp_{2m}\QQ\times\Sp_{2n}\QQ}^{\Sp_{2m+2n}\QQ} \Sp_{2m+2n} (\lambda) \cong  \bigoplus_{\mu,\nu} \Big(\modSp_{2m}(\mu) \otimes \modSp_{2n}(\nu)\Big)^{\oplus \sum\limits_{\gamma, \delta} c_{\mu\nu}^{\gamma}c_{\gamma (2\delta)'}^{\lambda}}.   \]
\end{thm}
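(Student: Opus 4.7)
The plan is to adapt Koike's proof of the general linear analogue (\autoref{thm:outerGL}) using Koike--Tereda's universal character theory for the symplectic group. The key tool is the ring homomorphism $\pi_{\Sp_{2n}}\colon \Lambda_x \to R(\Sp_{2n}\QQ)$ sending $\chi_{\Sp}(\lambda)$ to $\modSp_{2n}(\lambda)$, together with the coproduct $\Delta\colon \Lambda_x \to \Lambda_x \otimes \Lambda_x$ induced by splitting the alphabet $x = x' \sqcup x''$, for which on Schur functions one has the classical identity
\[ \Delta(s_\lambda) = \sum_{\mu,\nu} c^\lambda_{\mu\nu}\, s_\mu \otimes s_\nu. \]

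\textbf{Step 1 (universal branching in $\Lambda_x$).} I would first establish the identity
\[ \Delta(\chi_{\Sp}(\lambda)) = \sum_{\mu,\nu,\gamma,\delta} c^\gamma_{\mu\nu}\, c^\lambda_{\gamma(2\delta)'}\, \chi_{\Sp}(\mu)\otimes \chi_{\Sp}(\nu) \]
in $\Lambda_x \otimes \Lambda_x$. This follows from combining Koike--Tereda's Littlewood transform expressing Schur functions in the symplectic universal basis,
\[ s_\lambda = \sum_{\mu,\delta} c^\lambda_{\mu(2\delta)'}\, \chi_{\Sp}(\mu), \]
with the Schur-function coproduct above and the ring structure on $\Lambda_x$.

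\textbf{Step 2 (compatibility with restriction).} Next I would show that $\Delta$ corresponds to restriction, i.e.\ that
\[ (\pi_{\Sp_{2m}}\otimes \pi_{\Sp_{2n}}) \circ \Delta = \Res^{\Sp_{2m+2n}\QQ}_{\Sp_{2m}\QQ \times \Sp_{2n}\QQ} \circ\, \pi_{\Sp_{2m+2n}}. \]
Both sides are ring homomorphisms from $\Lambda_x$ to $R(\Sp_{2m}\QQ)\otimes R(\Sp_{2n}\QQ)$, so it suffices to check agreement on ring generators. On the generator $\chi_{\Sp}(1) = s_1$ the identity reduces to the fact that the standard representation of $\Sp_{2m+2n}$ restricts to the direct sum of the standards of $\Sp_{2m}$ and $\Sp_{2n}$, which is immediate.

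\textbf{Step 3 (conclusion).} Applying $\pi_{\Sp_{2m}}\otimes \pi_{\Sp_{2n}}$ to the identity of Step 1 yields the theorem, since the hypothesis $\ell(\lambda)\le m+n$ ensures $\modSp_{2m+2n}(\lambda) = \Sp_{2m+2n}(\lambda)$ so that the left-hand side equals the desired restriction. The main obstacle is Step 1: although morally contained in \cite{KT}, extracting the clean expression with the $(2\delta)'$ correction requires careful bookkeeping of the inverse Littlewood transform. A shortcut, which I would use if the direct computation becomes unwieldy, is to observe that the family $\{\pi_{\Sp_{2n}}\}_n$ is jointly injective on $\Lambda_x$, so it suffices to verify the universal identity after applying $\pi_{\Sp_{2M}}\otimes \pi_{\Sp_{2N}}$ for all sufficiently large $M,N$; there it follows directly from the stable branching rule (\autoref{thm:Spstable}), bypassing the combinatorics entirely.
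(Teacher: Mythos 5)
Your overall strategy is the same as the paper's: work in Koike--Terada's universal character ring, expand $\chi_{\Sp}(\lambda)$ over the split alphabet in the basis $\{\chi_{\Sp}(\mu)\otimes\chi_{\Sp}(\nu)\}$, and push the resulting identity into $R(\Sp_{2m}\QQ)\otimes R(\Sp_{2n}\QQ)$ via the specialization maps. In fact your ``shortcut'' for Step 1 --- determine the universal coefficients by applying $\pi_{\Sp_{2M}}\otimes\pi_{\Sp_{2N}}$ for all large $M,N$, where modification is trivial and the stable rule \autoref{thm:Spstable} applies, using joint injectivity of the specializations --- is precisely the paper's argument, which takes $N\ge\max(\ell(\lambda),\ell(\mu)+\ell(\nu))$ and reads off $m^\lambda_{\mu\nu}=\sum_{\gamma,\delta}c^{\gamma}_{\mu\nu}c^{\lambda}_{\gamma(2\delta)'}$ before specializing to ranks $m,n$. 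Your primary route to Step 1 through the inverse Littlewood transform would also work, but as you concede it costs sign bookkeeping that the stable-specialization route avoids.

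One concrete slip in Step 2: $\Lambda_x$ is \emph{not} generated as a ring by $s_1=\chi_{\Sp}(1)$; it is a polynomial ring on infinitely many generators (e.g.\ $e_1,e_2,\dots$), so checking compatibility of $\Delta$ with restriction only on the standard representation proves nothing. The repair is easy: check it on all the $e_k$, where $\Delta(e_k)=\sum_{i+j=k}e_i\otimes e_j$ matches ${\bigwedge}^k\QQ^{2m+2n}\cong\bigoplus_{i+j=k}{\bigwedge}^i\QQ^{2m}\otimes{\bigwedge}^j\QQ^{2n}$ under the specializations, which send $e_k$ to the class of ${\bigwedge}^k$ of the standard representation. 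The paper does not spell this compatibility out (it invokes the commutative diagrams in the spirit of Koike's proof of \autoref{thm:outerGL}), so supplying the check is a reasonable addition, but as written your generator argument is incomplete; with that fixed, Steps 1--3 give the theorem.
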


\begin{proof}
In the philosophy of the proof of \cite[Prop 2.6]{Ko}, we consider two variable sets $x,y$ and the natural embedding 
\[ \Lambda_{x\cup y} \longrightarrow \Lambda_x \otimes \Lambda_y\]
where $x\cup y$ is the union of the variable sets $x$ and $y$. Then there is a unique way to write
\[ \chi_{\Sp}(\lambda)(x\cup y) = \sum m^\lambda_{\mu \nu} \chi_{\Sp}(\mu)(x) \otimes \chi_{\Sp}(\nu)(y).\]
Let $N\ge \max(\ell(\lambda), \ell(\mu)+\ell(\nu))$. Consider the following commutative diagram.
\[\xymatrix{
\Lambda_{x\cup y} \ar[r]\ar[d] & \Lambda_x \otimes \Lambda_y\ar[d]\\
R(\Sp_{4N}\QQ) \ar[r] & R(\Sp_{2N}\QQ) \otimes R(\Sp_{2N}\QQ)
}\]
Then the stable branching rule \autoref{thm:Spstable} and  \hyperref[prop:modSp]{\autoref{prop:modSp}(\ref{item:stablemodSp})} imply
\[ m^\lambda_{\mu\nu}= \sum\limits_{\gamma, \delta} c_{\mu\nu}^{\gamma}c_{\gamma (2\delta)'}^{\lambda}.\]
Then the following commutative diagram proves the assertion.
\[
\begin{gathered}[b]
\xymatrix{
\Lambda_{x\cup y} \ar[r]\ar[d] & \Lambda_x \otimes \Lambda_y\ar[d]\\
R(\Sp_{2m+2n}\QQ) \ar[r] & R(\Sp_{2m}\QQ) \otimes R(\Sp_{2n}\QQ)
}\\[-\dp\strutbox]
\end{gathered}\qedhere\]
\end{proof}

In the last paragraph of \cite[Sec 2]{Ko} \emph{plethysms} in the universal character ring $\Lambda_{xy}$ are introduced. That is if $\lambda,\mu^+,\mu^-$ are partitions then there is an element $s_\lambda(x) \circ (s_{\mu^+}(x) \otimes s_{\mu^-}(y)) \in \Lambda_{xy}$ such that
\[ \tilde \pi_n( s_\lambda(x) \circ (s_{\mu^+} (x)\otimes s_{\mu^-}(y))) = \GL_n(\mu^+,\mu^-)^{\otimes |\lambda|} \tens[\QQ\mathfrak S_{|\lambda|}] \mathfrak S_{|\lambda|}(\lambda)\]
for all $n\ge \ell(\mu^+)+\ell(\mu^-)$. We need the following consequence.

\begin{prop}\label{prop:wedgeGL}
Let $\lambda^+,\lambda^-$ be partitions and $k\in \NN$ then there is a large $N\in\NN$ and fixed coefficients $m_{\mu^+\mu^-}$ such that 
\[ {\bigwedge}^k \GL_n(\lambda^+,\lambda^-) \cong \bigoplus_{\mu^+,\mu^-} \GL_n(\mu^+,\mu^-)^{\oplus m_{\mu^+\mu^-}}\]
for all $n\ge N$.
\end{prop}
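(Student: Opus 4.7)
The plan is to recognize the exterior power as a plethysm and reduce the statement to the universal-character computation recalled immediately before the proposition. Concretely, $\bigwedge^k V$ is the Schur functor $V^{\otimes k}\otimes_{\QQ\mathfrak{S}_k} \mathfrak{S}_k((1^k))$ associated to the column partition $(1^k)$, since $\mathfrak{S}_k((1^k))$ is the sign representation. Hence the element
\[ F := s_{(1^k)}(x) \circ \bigl(s_{\lambda^+}(x)\otimes s_{\lambda^-}(y)\bigr) \in \Lambda_{xy} \]
satisfies $\tilde\pi_n(F) = \bigwedge^k \GL_n(\lambda^+,\lambda^-)$ for every $n \ge \ell(\lambda^+)+\ell(\lambda^-)$, by the displayed identity in the last paragraph preceding the proposition.

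Next I would expand $F$ in the Schur basis of $\Lambda_{xy}$. Plethysm is degree-multiplicative, so $F$ is homogeneous of degree $d := k(|\lambda^+|+|\lambda^-|)$, and the degree-$d$ piece of $\Lambda_{xy}$ is a finite-rank free abelian group with basis $\{s_{\mu^+}(x)\otimes s_{\mu^-}(y) : |\mu^+|+|\mu^-|=d\}$. This gives a finite expansion
\[ F = \sum_{\mu^+,\mu^-} m_{\mu^+\mu^-}\, s_{\mu^+}(x) \otimes s_{\mu^-}(y) \]
with integer coefficients $m_{\mu^+\mu^-}$ that are independent of $n$.

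Finally I take $N$ to be any integer at least as large as $\ell(\lambda^+)+\ell(\lambda^-)$ and also as large as $\ell(\mu^+)+\ell(\mu^-)$ for every pair $(\mu^+,\mu^-)$ appearing above with nonzero coefficient (a finite condition). For $n\ge N$, \hyperref[prop:modGL]{\autoref{prop:modGL}(\ref{item:stablemodGL})} gives $\tilde\pi_n(s_{\mu^+}(x)\otimes s_{\mu^-}(y)) = \GL_n(\mu^+,\mu^-)$ for every nonzero term, so applying $\tilde\pi_n$ to the expansion of $F$ delivers the claimed isomorphism. The coefficients $m_{\mu^+\mu^-}$ are automatically nonnegative: for such $n$ the left-hand side is an honest semisimple $\GL_n\QQ$-representation and the $\GL_n(\mu^+,\mu^-)$ are pairwise non-isomorphic irreducibles, so their multiplicities cannot be negative.

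I do not expect a substantial obstacle here, as the argument is a direct packaging of the plethysm machinery in $\Lambda_{xy}$ together with the stable regime of the modification rules. The only point requiring genuine care is to enlarge $N$ beyond the stable range of $(\lambda^+,\lambda^-)$ so that \emph{every} constituent $(\mu^+,\mu^-)$ in the finite plethysm expansion also lies in its own stable range; this is exactly what ensures that no correction terms from the modification rules enter, and hence that the coefficients $m_{\mu^+\mu^-}$ are simultaneously the stable multiplicities for all $n\ge N$.
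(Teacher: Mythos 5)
Your proposal is correct and follows essentially the same route as the paper: expand the plethysm $s_{(1^k)}(x)\circ(s_{\lambda^+}(x)\otimes s_{\lambda^-}(y))$ in the Schur basis of $\Lambda_{xy}$, note the expansion is finite, choose $N$ beyond the lengths of all constituents, and apply $\tilde\pi_n$ in the stable range of the modification rules. The extra remarks on finiteness via homogeneity and nonnegativity of the coefficients are fine elaborations of what the paper leaves implicit.
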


\begin{proof}
Let us write
\[ s_{(1^k)}(x)\circ (s_{\lambda^+}(x)\otimes s_{\lambda^-}(y)) = \sum_{\mu^+,\mu^-} m_{\mu^+\mu^-}\cdot s_{\mu^+}(x)\otimes s_{\mu^-}(y)\]
in $\Lambda_{xy}$. This is a finite sum and let $N$ be the maximal value $\ell(\mu^+)+\ell(\mu^-)$ of those pairs $(\mu^+,\mu^-)$ for which $m_{\mu^+\mu^-} \neq 0$. Then applying $\tilde\pi_n$ gives
\[ {\bigwedge}^k \GL_n(\lambda^+,\lambda^-) \cong \bigoplus_{\mu^+,\mu^-} \GL_n(\mu^+,\mu^-)^{\oplus m_{\mu^+\mu^-}}\]
for all $n\ge N$ as asserted.
\end{proof}

The analogous statement for $\Sp_{2n}\QQ$ can be found in \cite{CF} or can be proved analogously.

\begin{prop}[Church--Farb {\cite[Thm 3.1]{CF}}]\label{prop:wedgeSp}
Let $\lambda$ be a partition and $k\in \NN$ then there is a large $N\in \NN$  and  fixed coefficients $m_{\mu}$ such that 
\[ {\bigwedge}^k \Sp_n(\lambda) \cong \bigoplus_{\mu} \Sp_n(\mu)^{\oplus m_{\mu}}\]
for all $n\ge N$.
\end{prop}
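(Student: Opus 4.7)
The plan is to mirror the proof of \autoref{prop:wedgeGL}, replacing the universal character ring $\Lambda_{xy}$ and Koike's plethysm construction with their symplectic analogues in $\Lambda_x$ developed by Koike--Tereda. The key observation is that in the universal character ring $\Lambda_x$, the basis $\{\chi_\Sp(\lambda)(x)\}_{\lambda}$ together with the ring homomorphism $\pi_{\Sp_{2n}}\colon \Lambda_x \to R(\Sp_{2n}\QQ)$ gives us stable information about the characters of $\Sp_{2n}\QQ$, valid modulo the symplectic modification rules captured by \autoref{prop:modSp}.

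First, I would establish (or invoke) the existence of a plethysm operation on $\Lambda_x$: for each partition $\lambda$ there is an element $s_\lambda(x) \circ \chi_\Sp(\mu)(x) \in \Lambda_x$ satisfying
\[ \pi_{\Sp_{2n}}\bigl( s_\lambda(x) \circ \chi_\Sp(\mu)(x) \bigr) = \Sp_{2n}(\mu)^{\otimes |\lambda|} \tens[\QQ\mathfrak S_{|\lambda|}] \mathfrak S_{|\lambda|}(\lambda) \]
whenever $n$ is large enough (for instance $n \ge |\lambda|\cdot \ell(\mu)$, so that no modification rules are triggered in the tensor powers). This is the direct symplectic analogue of the plethysm $s_\lambda(x)\circ(s_{\mu^+}(x)\otimes s_{\mu^-}(y))$ used in the proof of \autoref{prop:wedgeGL}, and the construction is formally identical: one defines it as the unique element whose image under all $\pi_{\Sp_{2n}}$ (for $n$ sufficiently large) agrees with the plethystic character of the corresponding representation.

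Second, I apply this to $\lambda$ replaced by $(1^k)$. Since $\{\chi_\Sp(\mu)(x)\}_\mu$ is a basis of $\Lambda_x$, there is a unique finite expansion
\[ s_{(1^k)}(x) \circ \chi_\Sp(\lambda)(x) = \sum_{\mu} m_\mu \cdot \chi_\Sp(\mu)(x), \]
and the coefficients $m_\mu$ are independent of $n$. Let $N$ be the maximum of $\ell(\mu)$ over those $\mu$ with $m_\mu \neq 0$ (together with any threshold required to trivialize the plethysm formula above). Applying $\pi_{\Sp_{2n}}$ to both sides yields
\[ {\bigwedge}^k \Sp_{2n}(\lambda) \cong \bigoplus_\mu \modSp_{2n}(\mu)^{\oplus m_\mu}, \]
and by \hyperref[prop:modSp]{\autoref{prop:modSp}(\ref{item:stablemodSp})}, once $n \ge N$ every $\modSp_{2n}(\mu)$ occurring is just $\Sp_{2n}(\mu)$, giving the claimed stable decomposition.

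The main obstacle is the first step: setting up the plethysm in $\Lambda_x$ cleanly. Koike's paper \cite{Ko} only treats the $\GL$-case explicitly in $\Lambda_{xy}$, so one must either translate that construction to the symplectic basis $\chi_\Sp$, or argue directly by taking $N \gg n$ and pulling the identity back along the vertical ring homomorphisms in a commutative square
\[ \xymatrix{ \Lambda_x \ar[r]\ar[d] & \Lambda_x \ar[d] \\ R(\Sp_{2N}\QQ) \ar[r] & R(\Sp_{2n}\QQ), } \]
mirroring the diagram that appears in the proof of \autoref{thm:outerSp}. Everything after this setup is formal manipulation of the universal character ring.
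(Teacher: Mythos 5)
Your proposal is correct and is exactly the route the paper has in mind: the paper offers no proof of \autoref{prop:wedgeSp}, simply citing Church--Farb \cite{CF} and remarking that it ``can be proved analogously'' to \autoref{prop:wedgeGL}, and your expansion of the symplectic plethysm $s_{(1^k)}(x)\circ\chi_{\Sp}(\lambda)(x)$ in the basis $\{\chi_{\Sp}(\mu)(x)\}$ followed by \hyperref[prop:modSp]{\autoref{prop:modSp}(\ref{item:stablemodSp})} is precisely that analogous argument. One caution on your fallback justification: the square with the identity across the top and a map $R(\Sp_{2N}\QQ)\to R(\Sp_{2n}\QQ)$ below does not commute with the $\pi$'s for any natural choice of bottom map (restriction along the block embedding corresponds to setting the extra variables to $1$, not to $0$), so the key compatibility $\pi_{\Sp_{2n}}\bigl(s_{(1^k)}(x)\circ\chi_{\Sp}(\lambda)(x)\bigr)=\left[{\bigwedge}^k\Sp_{2n}(\lambda)\right]$ for large $n$ should instead be justified directly---for instance by writing $e_k$ in power sums via Newton's identities and using that $\pi_{\Sp_{2n}}$ is a specialization homomorphism, or by quoting the universal-character formalism of Koike--Terada \cite{KT}.
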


%

The following corollaries are needed in  \autoref{section:fg}.


\begin{cor}\label{cor:branchingGL}
Let $\lambda^+,\lambda^-$ be partitions with $\ell(\lambda^+) +  \ell(\lambda^-) \le n$ and let $\mu^+, \mu^-$ be partitions with $\ell(\mu^+) +  \ell(\mu^-) \le m$. Assume further $|\lambda^+|+|\lambda^-| \le |\mu^+|+|\mu^-|$, then
\begin{multline*} [\Res^{\GL_n\QQ}_{\GL_m\QQ\times \GL_{n-m}\QQ} \GL_n(\lambda^+,\lambda^-), \GL_m(\mu^+,\mu^-)\otimes \GL_{n-m}(\nu^+,\nu^-)] \\= \begin{cases} 1&\text{if $\mu^+= \lambda^+$, $\mu^-= \lambda^-$ and $\nu^+=\nu^- = \emptyset$}\\ 0&\text{otherwise.}\end{cases}\end{multline*}
In particular, if $|\lambda^+|+|\lambda^-| < |\mu^+|+|\mu^-|$ then \[\Hom_{\GL_{m}\QQ} ( \GL_{m}(\mu^+,\mu^-),
\Res^{\GL_n\QQ}_{\GL_m\QQ} \GL_{n}(\lambda^+,\lambda^-) ) = 0.\]
Similarly if $\ell(\lambda^+) < \ell(\mu^+)$ or $ \ell(\lambda^-)<  \ell(\mu^-)$ then 
 \[\Hom_{\GL_{m}\QQ} ( \GL_{m}(\mu^+,\mu^-),\Res^{\GL_n\QQ}_{\GL_m\QQ} \GL_{n}(\lambda^+,\lambda^-) ) = 0.\]
\end{cor}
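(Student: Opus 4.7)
The plan is to apply \autoref{thm:outerGL}, which decomposes the restriction as
\[
\bigoplus_{\tilde\mu^+,\tilde\mu^-,\tilde\nu^+,\tilde\nu^-} \bigl(\modGL_m(\tilde\mu^+,\tilde\mu^-) \otimes \modGL_{n-m}(\tilde\nu^+,\tilde\nu^-)\bigr)^{\oplus M(\tilde\mu,\tilde\nu)}
\]
with multiplicity $M(\tilde\mu,\tilde\nu) = \sum_{\gamma^+,\gamma^-,\delta} c^{\gamma^+}_{\tilde\mu^+\delta} c^{\gamma^-}_{\tilde\mu^-\delta} c^{\lambda^+}_{\gamma^+\tilde\nu^+} c^{\lambda^-}_{\gamma^-\tilde\nu^-}$. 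Since the tensor factors are only \emph{modified} representations, I need to track which tuples $(\tilde\mu,\tilde\nu)$ actually contribute to the multiplicity of $\GL_m(\mu^+,\mu^-)\otimes\GL_{n-m}(\nu^+,\nu^-)$.

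The key size estimate is as follows. By \hyperref[prop:modGL]{\autoref{prop:modGL}(\ref{item:modincludingGL})}, any $\tilde\mu$ with $\modGL_m(\tilde\mu^+,\tilde\mu^-) = \pm\GL_m(\mu^+,\mu^-)$ satisfies $\mu^+\subseteq\tilde\mu^+$ and $\mu^-\subseteq\tilde\mu^-$, so $|\mu^+|+|\mu^-|\le |\tilde\mu^+|+|\tilde\mu^-|$ with equality iff $\tilde\mu=\mu$. The size condition on Littlewood--Richardson coefficients forces
\[
|\tilde\mu^+|+|\tilde\mu^-|+2|\delta|+|\tilde\nu^+|+|\tilde\nu^-| = |\lambda^+|+|\lambda^-|.
\]
Combined with the hypothesis $|\lambda^+|+|\lambda^-| \le |\mu^+|+|\mu^-|$, the chain
\[
|\mu^+|+|\mu^-| \le |\tilde\mu^+|+|\tilde\mu^-| \le |\lambda^+|+|\lambda^-| \le |\mu^+|+|\mu^-|
\]
collapses to equalities, forcing $\tilde\mu=\mu$ and $\delta = \tilde\nu^+ = \tilde\nu^- = \emptyset$.

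With $\tilde\mu=\mu$ and $\ell(\mu^+)+\ell(\mu^-)\le m$, \hyperref[prop:modGL]{\autoref{prop:modGL}(\ref{item:stablemodGL})} gives $\modGL_m(\tilde\mu^+,\tilde\mu^-)=\GL_m(\mu^+,\mu^-)$ with positive sign, and $\modGL_{n-m}(\emptyset,\emptyset)$ is the trivial representation. So a contribution to $\GL_m(\mu^+,\mu^-)\otimes \GL_{n-m}(\nu^+,\nu^-)$ requires $\nu^+ = \nu^- = \emptyset$. The four Littlewood--Richardson coefficients collapse to $c^{\gamma^+}_{\mu^+\emptyset}c^{\gamma^-}_{\mu^-\emptyset}c^{\lambda^+}_{\gamma^+\emptyset}c^{\lambda^-}_{\gamma^-\emptyset}$, which equals $1$ iff $\gamma^+ = \mu^+ = \lambda^+$ and $\gamma^- = \mu^- = \lambda^-$, and vanishes otherwise. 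This is precisely the stated formula.

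Both ``in particular'' statements follow by summing the multiplicity formula over $\nu^+,\nu^-$. Under strict size inequality $|\lambda^+|+|\lambda^-|<|\mu^+|+|\mu^-|$, the equality $\mu=\lambda$ is impossible, so the formula always vanishes. For the length case I do not need the size hypothesis: any nonzero contribution requires $\mu^\pm \subseteq \tilde\mu^\pm \subseteq \gamma^\pm \subseteq \lambda^\pm$, where the first containment is from the modification rule and the middle two come from the nonvanishing of $c^{\gamma^\pm}_{\tilde\mu^\pm\delta}$ and $c^{\lambda^\pm}_{\gamma^\pm\tilde\nu^\pm}$; this forces $\ell(\mu^\pm)\le\ell(\lambda^\pm)$. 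The principal care is bookkeeping the modification rules so that contributions from strictly larger $\tilde\mu$ modifying down to $\GL_m(\mu^+,\mu^-)$ are ruled out; the size chain above accomplishes this uniformly.
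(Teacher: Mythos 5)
Your proof is correct and takes essentially the same route as the paper: apply \autoref{thm:outerGL}, then combine the containment from \autoref{prop:modGL}(c) with the Littlewood--Richardson size constraints to collapse the chain $|\mu^+|+|\mu^-|\le|\tilde\mu^+|+|\tilde\mu^-|\le|\lambda^+|+|\lambda^-|\le|\mu^+|+|\mu^-|$ and force $\tilde\mu=\mu$, $\delta=\tilde\nu^\pm=\emptyset$, which also rules out contributions from larger partitions modifying down to $\GL_m(\mu^+,\mu^-)$. Your explicit containment argument $\mu^\pm\subseteq\tilde\mu^\pm\subseteq\gamma^\pm\subseteq\lambda^\pm$ for the length statement (which indeed needs no size hypothesis) is a detail the paper's proof leaves implicit.
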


\begin{proof}
From \hyperref[prop:modGL]{\autoref{prop:modGL}(\ref{item:modincludingGL})}, we know that $|\modGL_m(\eta^+,\eta^-)|\le |\eta^+|+|\eta^-|$. Thus if $\modGL_m(\eta^+,\eta^-) = (\mu^+,\mu^-)$, we also know $|\eta^+|+|\eta^-| \ge |\lambda^+|+|\lambda^-|$. For such $\eta^+,\eta^-$ we can calculate the multiplicity from  \autoref{thm:outerGL}.
\[ c_{\eta^+ \nu^+}^{\gamma^+}c_{\eta^- \nu^-}^{\gamma^-}c_{\gamma^+ \delta}^{\lambda^+}c_{\gamma^- \delta}^{\lambda^-} = \begin{cases} 1&\text{if $\eta^+=\gamma^+ = \lambda^+$, $\eta^-= \gamma^-= \lambda^-$ and $\nu^+=\nu^-= \delta = \emptyset$}\\ 0&\text{otherwise.}\end{cases}\]
Therefore the only constituent \[\modGL_m(\eta^+,\eta^-)\otimes \modGL_{n-m}(\nu^+,\nu^-)\] in $\Res_{\GL_m\QQ\times\GL_n\QQ}^{\GL_{m+n}\QQ} \GL_{m+n}$ with $|\eta^+|+|\eta^-| \ge |\lambda^+|+|\lambda^-|$ is
\[\modGL_m(\mu^+,\mu^-)\otimes \modGL_{n-m}(\emptyset,\emptyset)  = \GL_m(\mu^+,\mu^-)\otimes\GL_{n-m}(\emptyset).\qedhere \]
\end{proof}

\begin{cor}\label{cor:branchingSp}
Let $\lambda$ be a partition with $\ell(\lambda)\le n$ and let $\mu$ be a partition with $\ell(\mu) \le m$. Assume further $|\lambda| \le |\mu|$, then
\[ [\Res^{\Sp_{2n}\QQ}_{\Sp_{2m}\QQ\times \Sp_{2n-2m}\QQ} \Sp_{2n}(\lambda), \Sp_{2m}(\mu)\otimes \Sp_{2n-2m}(\nu)] = \begin{cases} 1&\text{if $\mu= \lambda$ and $\nu= \emptyset$}\\ 0&\text{otherwise.}\end{cases}\]
In particular, if $|\lambda| < |\mu|$ then \[\Hom_{\Sp_{2m}\QQ} ( \Sp_{2m}(\mu),
\Res^{\Sp_{2n}\QQ}_{\Sp_{2m}\QQ} \Sp_{2n}(\lambda) ) = 0.\]
\end{cor}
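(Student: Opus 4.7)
The plan is to follow exactly the strategy used in the proof of \autoref{cor:branchingGL}, with \autoref{thm:outerSp} and \autoref{prop:modSp} replacing \autoref{thm:outerGL} and \autoref{prop:modGL}. Expanding the restriction via \autoref{thm:outerSp} gives
\[ \Res^{\Sp_{2n}\QQ}_{\Sp_{2m}\QQ\times\Sp_{2n-2m}\QQ} \Sp_{2n}(\lambda) \cong \bigoplus_{\eta,\nu'} \bigl(\modSp_{2m}(\eta)\otimes \modSp_{2n-2m}(\nu')\bigr)^{\oplus\sum_{\gamma,\delta} c^\gamma_{\eta\nu'} c^\lambda_{\gamma (2\delta)'}}, \]
so the multiplicity of $\Sp_{2m}(\mu)\otimes \Sp_{2n-2m}(\nu)$ in the restriction is obtained by summing the coefficients over all $(\eta,\nu')$ for which $\modSp_{2m}(\eta)=\pm\Sp_{2m}(\mu)$ and $\modSp_{2n-2m}(\nu')=\pm\Sp_{2n-2m}(\nu)$, with the appropriate signs.

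Next I would use the inclusion part of \hyperref[prop:modSp]{\autoref{prop:modSp}(\ref{item:modincludingSp})}: whenever $\modSp_{2m}(\eta)=\pm\Sp_{2m}(\mu)$, the diagram $\mu$ sits inside $\eta$, so $|\eta|\ge |\mu|$; similarly $|\nu'|\ge|\nu|$. On the other hand, any nonzero Littlewood--Richardson coefficient $c^\gamma_{\eta\nu'} c^\lambda_{\gamma(2\delta)'}$ forces $|\eta|+|\nu'|+|(2\delta)'|=|\lambda|$. Combined with the hypothesis $|\lambda|\le |\mu|$, this chain of inequalities
\[ |\mu|\le |\lambda| = |\eta|+|\nu'|+|(2\delta)'| \ge |\eta|\ge |\mu| \]
can only be satisfied if $|\eta|=|\mu|=|\lambda|$, $|\nu'|=0$, and $|\delta|=0$. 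Because $\mu\subseteq \eta$ and $|\mu|=|\eta|$ we must have $\eta=\mu$, and since $\ell(\mu)\le m$, in fact $\modSp_{2m}(\mu)=+\Sp_{2m}(\mu)$ with positive sign; similarly $\nu'=\emptyset$ and so $\nu=\emptyset$ as well. Hence only a single pair $(\eta,\nu')=(\mu,\emptyset)$ contributes.

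Finally I would evaluate the remaining coefficient. With $\eta=\mu$, $\nu'=\emptyset$, $\delta=\emptyset$, the Littlewood--Richardson coefficient $c^\gamma_{\mu\emptyset}$ forces $\gamma=\mu$, and then $c^\lambda_{\mu\emptyset}$ is $1$ precisely when $\mu=\lambda$ and $0$ otherwise. Thus the multiplicity equals $1$ exactly when $\mu=\lambda$ and $\nu=\emptyset$, and vanishes otherwise, which is the stated formula. The "in particular" clause then follows immediately: if $|\lambda|<|\mu|$, no $\nu$ can yield a nonzero multiplicity in the restriction to $\Sp_{2m}\QQ\times\Sp_{2n-2m}\QQ$, so by Frobenius reciprocity (or equivalently by summing over all $\nu$) the Hom space vanishes.

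The only step with any real content is the length/size bookkeeping in the second paragraph; none of it is difficult, but one has to be careful that both the modification rule inclusion and the Littlewood--Richardson support condition are applied on the correct side to squeeze $|\eta|$ and $|\nu'|$ simultaneously.
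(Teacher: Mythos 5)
Your argument is correct and is essentially the paper's own proof: the paper likewise expands via \autoref{thm:outerSp}, uses \hyperref[prop:modSp]{\autoref{prop:modSp}(\ref{item:modincludingSp})} to force $|\eta|\ge|\mu|\ge|\lambda|$, and then observes that the Littlewood--Richardson support conditions kill every term except $\eta=\gamma=\lambda$, $\nu'=\delta=\emptyset$ with coefficient $1$. Your write-up just spells out the size bookkeeping that the paper leaves implicit by citing the analogous $\GL$ case.
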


\begin{proof}
Analogous to  \autoref{cor:branchingGL} we calculate the multiplicity from  \autoref{thm:outerSp} for $|\eta|\ge |\lambda|$.
\[ \begin{gathered}[b]c_{\eta \nu}^{\gamma}c_{\gamma (2\delta)'}^{\lambda} = \begin{cases} 1&\text{if $\eta=\gamma = \lambda$ and $\nu= \delta = \emptyset$}\\ 0&\text{otherwise.}\end{cases}\\[-\dp\strutbox]
\end{gathered}\qedhere\]
\end{proof}

\subsection{Restriction to $\GL_n\ZZ$ and $\Sp_{2n}\ZZ$}\label{sec:res}

We will later need to understand the restrictions $\Res^{\GL_n \QQ}_{\GL_n\ZZ} \GL_n(\lambda^+,\lambda^-)$ and $\Res^{\Sp_{2n}\QQ}_{\Sp_{2n}\ZZ} \Sp_n(\lambda)$. The information we need is provided by Borel in the context of his density theorem:

\begin{thm}[Borel {\cite[Prop 3.2]{Bo2}}]\label{thm:Borel}
Let $G$ be a simple non-compact connected real Lie group and $V$ a finite dimensional irreducible $G$--representation. Let $H$ be a subgroup of $G$ such that for every neighborhood $U$ of the identity in $G$ and every $g\in G$ there exists an integer $n>0$ with $g^n \in U\cdot H\cdot U$. Then $V$ is an irreducible $H$--representation.
\end{thm}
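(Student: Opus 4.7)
The plan is to show that any $H$-invariant subspace $W\subseteq V$ is automatically $G$-invariant; since $V$ is $G$-irreducible, this will force $W=0$ or $W=V$. Let $P=\{g\in G : gW=W\}$, a closed subgroup of $G$ containing $H$. Since $G$ is connected it suffices to show $P=G$, or equivalently that the $G$-orbit of $[W]$ in the Grassmannian $\mathrm{Gr}(\dim W,V)$ is a single point.

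First I would translate the density hypothesis into a dynamical statement on $\mathrm{Gr}(\dim W,V)$. For $g\in G$ and neighborhoods $U_k$ of $e$ shrinking to $\{e\}$, the hypothesis provides integers $n_k>0$, elements $u_1^k,u_2^k\in U_k$, and $h_k\in H\subseteq P$ with $g^{n_k}=u_1^kh_ku_2^k$. Since $h_k$ fixes $[W]$ and $u_1^k,u_2^k\to e$, compactness of the Grassmannian allows one to extract a subsequence along which $g^{n_k}\cdot[W]=u_1^k\cdot(h_ku_2^k\cdot[W])$ converges to $[W]$. Thus $[W]$ is a recurrent point for every cyclic subgroup $\langle g\rangle\subseteq G$.

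Second I would upgrade this recurrence to a fixed-point statement using the algebraic structure of $G$. The stabilizer $P$ is Zariski closed in $G$, so it contains the Zariski closure $\overline{H}$ of $H$. If $\overline{H}\neq G$, then $G/\overline{H}$ is a positive-dimensional homogeneous variety, and the Jordan decomposition of an element $g\in G\setminus\overline{H}$ lets one analyze its action there in terms of commuting semisimple and unipotent parts. In a simple non-compact Lie group the dynamics of both types on projective varieties are rigid (attractor--repeller behavior for semisimple elements, and no bounded non-fixed orbits for unipotent ones), and together they force the recurrent point $[W]$ to be fixed by all of $\langle g\rangle$, hence by $G$. This is the classical engine of Borel's density theorem, and the hypothesis in the present theorem is a clean axiomatization of exactly the dynamical property needed to run that argument.

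The main obstacle is this last step: converting the weak dynamical recurrence of $[W]$ under every $\langle g\rangle\subseteq G$ into the strong algebraic statement $g\cdot[W]=[W]$. Both the simplicity and the non-compactness of $G$, together with the algebraic nature of $P$ as a stabilizer in a linear representation, are essential here. In the intended applications ($G=\GL_n\RR$ or $\Sp_{2n}\RR$ with $H$ the corresponding integer subgroup), $H$ is a lattice and the recurrence hypothesis is supplied automatically by Poincar\'e recurrence on $G/H$, after which the conclusion follows from Borel density in its classical form.
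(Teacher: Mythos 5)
First, a point of comparison: the paper does not prove this statement at all --- it is quoted from Borel \cite[Prop 3.2]{Bo2} and used as a black box for the restrictions to $\GL_n\ZZ$ and $\Sp_{2n}\ZZ$ --- so there is no internal proof to measure your argument against; it has to stand on its own, and as written it does not. The first concrete problem is the recurrence step. From $g^{n_k}=u_1^k h_k u_2^k$ with $u_1^k,u_2^k\to e$ you cannot conclude that $g^{n_k}\cdot[W]$ converges to $[W]$ along a subsequence: $h_k$ fixes the single point $[W]$ but is in no way equicontinuous near it, so $h_k\cdot\bigl(u_2^k\cdot[W]\bigr)$ can drift anywhere in the Grassmannian; compactness gives convergence to \emph{some} limit, not to $[W]$. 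What the hypothesis actually yields is the two-sided statement $g^{n_k}\cdot\bigl((u_2^k)^{-1}\cdot[W]\bigr)=u_1^k\cdot\bigl(h_k\cdot[W]\bigr)=u_1^k\cdot[W]$, i.e.\ there are points $W_k\to[W]$ with $g^{n_k}\cdot W_k\to[W]$. This weaker approximate invariance along a subsequence of powers is what any correct proof must work with, and it changes the dynamical argument needed downstream.

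Second, and more seriously, the decisive step --- upgrading this recurrence to $g\cdot[W]=[W]$ --- is not carried out but delegated to ``the classical engine of Borel's density theorem''. That engine is precisely the content of the proposition you are asked to prove, so the argument is circular at its core. Moreover, the element-by-element strategy cannot succeed as described: for $g$ lying in a compact subgroup the hypothesis is vacuous (powers of $g$ return near $e$ regardless of $H$) and there is no attractor--repeller dynamics to exploit, so you cannot hope to show directly that every individual $g$ fixes $[W]$. The standard repair is to show that the closed stabilizer $P\supseteq H$ contains all unipotent and all $\RR$-split semisimple elements --- for these the corrected two-sided recurrence plus the Jordan/eigenvalue analysis (e.g.\ boundedness of the relevant polynomial or exponentially growing matrix coefficients along the returning powers) does force invariance of $W$ --- and then to use that such elements generate $G$ because $G$ is simple, connected and non-compact, whence $P=G$. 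Two smaller inaccuracies: the stabilizer is Zariski closed only after observing that a finite-dimensional representation factors through a linear algebraic quotient of $G$, and in the paper's application the relevant groups are $\SL_n\RR$ and $\Sp_{2n}\RR$, not $\GL_n\RR$, which is not simple.
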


This theorem directly applies to the symplectic groups $G=\Sp_{2n}\RR$ and $H=\Sp_{2n}\ZZ$ and can also be transferred to stating that
\[ \Res^{\Sp_{2n}\QQ}_{\Sp_{2n}\ZZ} \Sp_{2n}(\lambda)\]
is an irreducible $\Sp_{2n}\ZZ$--representation for all partitions $\lambda$ of length $\ell(\lambda) \le n$. Furthermore because $H$ is Zariski dense in $G$ (which is the main result of \cite{Bo2}) all of these $\Sp_{2n}\ZZ$--representations are pairwise nonisomorphic.

To understand the situation for the general linear group, we need to take a look at the representation theory of the special linear group. Essentially, the difference between the rational representation theory of these two groups is the determinant representation, which restricts to the trivial $\SL_n\QQ$--representation. In fact, all irreducible polynomial (which is the same as rational) $\SL_n\QQ$--rep\-res\-en\-ta\-tions are given by and are uniquely (up to isomorphism) determined by
\[ \Res^{\GL_n\QQ}_{\SL_n\QQ}\GL_n(\lambda) = V_n^{\otimes r} \tens[\QQ\mathfrak S_r] \mathfrak S_r(\lambda)\]
for some partition $\lambda$ with length $\ell(\lambda) \le n-1$. In general, the restriction is not much harder. Every irreducible rational $\GL_n\QQ$--representation can be written as a tensor product
\[ \GL_n(\lambda^+,\lambda^-) \cong \GL_n(\lambda) \otimes D_k\]
for a uniquely determined partition $\lambda$ with length $\ell(\lambda) \le n-1$ and $k=\lambda^-_1\in \ZZ$ or $k=-\lambda^+_n\in \ZZ$ if $\lambda^-$ is empty.
Because
\[ \Res^{\GL_n\QQ}_{\SL_n\QQ} D_k \]
is the trivial representation, we have completely described the restriction of irreducible rational  $\GL_n\QQ$--representations to $\SL_n\QQ$.

 \autoref{thm:Borel} is now applicable to $G=\SL_n \RR$ and $H = \SL_n \ZZ$ and can be transferred to the statement that
\[ \Res^{\SL_n \QQ}_{\SL_n\ZZ} \SL_n(\lambda)\]
is an irreducible $\SL_n \ZZ$--representation for all partitions $\lambda$ of length $\ell(\lambda) \le n-1$. Again all these $\SL_n\ZZ$--representations are pairwise nonisomorphic. 

For the general linear group this implies that
\[ \Res^{\GL_n \QQ}_{\GL_n\ZZ} \GL_n(\lambda^+,\lambda^-)\]
is an irreducible $\GL_n\ZZ$--representation. Note that \[\Res^{\GL_n \QQ}_{\GL_n\ZZ} D_k \cong \Res^{\GL_n \QQ}_{\GL_n\ZZ} D_{k+2}\]
and thus all restrictions of irreducible rational $\GL_n\QQ$--representations to $\GL_n\ZZ$ have the form
\[ \Res^{\GL_n\QQ}_{\GL_n\ZZ} \big( \GL_n(\lambda) \otimes D_k\big)\]
for some partition $\lambda = (\lambda_1, \dots, \lambda_r)$ of length $r = \ell(\lambda)\le n-1$ and $k\in \{0,-1\}$. Such a $\GL_n \QQ$--representation has the highest weight
\[ (\lambda_1+k)L_1+\dots+(\lambda_r+k)L_r+kL_{r+1}+\dots+kL_n\]
so it is exactly one
\[ \GL_n(\lambda^+,\lambda^-)\]
such that $\ell(\lambda^+)\le n-1$ and $\lambda^-$ is contained in $(1^n)$.

 Assume
  \[  \Res^{\GL_n\QQ}_{\GL_n\ZZ} \big( \GL_n(\lambda) \otimes D_k\big) \cong \Res^{\GL_n\QQ}_{\GL_n\ZZ} \big( \GL_n(\lambda') \otimes D_{k'}\big)\]
 are isomorphic, then by restriction to $\SL_n\ZZ$, we see that $\lambda = \lambda'$. By an argument communicated to the author by David Speyer we can prove that
\[ \Res^{\GL_n\QQ}_{\GL_n\ZZ} \big( \GL_n(\lambda) \otimes D_{-1}\big) \not\cong \Res^{\GL_n\QQ}_{\GL_n\ZZ}  \GL_n(\lambda).\]
The argument goes as follows. Let $\rho$ denote the representation of $\GL_n\ZZ$ on $\GL_n(\lambda)$. If we assume there exists an isomorphism then its characters must coincide:
\[ (\det g)^{-1}\cdot \tr \rho(g)  = \tr \rho(g) \]
Thus all $g\in \GL_n \ZZ$ with negative determinant must have vanishing value of the character of $\rho$. The character can be described by the (complex) eigenvalues $\alpha_1, \dots, \alpha_n$ of $g$ as
\[ \tr \rho(g) = s_\lambda(\alpha_1, \dots, \alpha_n)\]
where $s_\lambda$ is the Schur polynomial, which is symmetric and homogeneous of degree $|\lambda|$. Therefore $s_\lambda$ can not be divisible by the inhomogeneous polynomial $1 + x_1 \cdots x_n$. Let us write $s_\lambda$ as a polynomial $p$ in the elementary  symmetric polynomials $e_1, \dots, e_n$. Then the previous statement is precisely that $p$ is not divisible by $1+e_n$. Because $\ZZ^n$ is Zariski closed in $\QQ^n$, there are integers $(f_1, \dots, f_n)$ such that $p(f_1, \dots, f_n) \neq 0$ but $1+f_n = 0$. Let
\[ g = \begin{pmatrix} 0 & \cdots &0&(-1)^{n+1}f_n\\ 1  & \ddots &\vdots &(-1)^{n} f_{n-1}\\  &\ddots&0&\vdots \\ 0 &  &1& f_1 \end{pmatrix}\in \GL_n\ZZ\]
be the companion matrix to the characteristic polynomial
\[ x^n- f_1x^{n-1} + \dots + (-1)^n f_n. \]
Say $\alpha_1, \dots, \alpha_n$ are the (complex) roots of the characteristic polynomial, that are the eigenvalues of $g$, then
\[ \tr \rho(g) = s_\lambda(\alpha_1, \dots \alpha_n) = p(f_1,\dots, f_n) \neq 0\]
even though the determinant $\det g = f_n = -1$. Contradiction.



\section{Representation stability for general linear groups and symplectic groups}\label{section:rep stab classical groups}\label{section:fg}

When Church--Ellenberg--Farb \cite{CEF} study representation stable sequences of representations of the symmetric groups, they consider modules over the category $\FI$ of finite sets and injections. When we want to generalize their work to the general linear groups and symplectic groups, the obvious generalizations of $\FI$ would be $\VI$ and $\SI$, the category of finite dimensional vector spaces and injections and the category of symplectic vector spaces and (injective) isometries. For the symplectic groups this turns out to be correct but for the general linear groups we need a different notion. The following definition of $\VIC$, which stands for vector spaces with injections and complements, was related to representation stability by Putman--Sam \cite{PS}. 

\subsection{$\VIC$ and $\SI$}

\begin{Def}\label{Def:VIC}
Fix a commutative ring $R$. Let $\VIC_R$ be the category whose objects are finite rank free modules over $R$ and its morphisms are given by a monomorphism together with a free complement of the image. That is
\[ \Hom_{\VIC_R}(V,W) = \{ (f,C) \mid f\colon V \inject W, \im f \oplus C = W, C\text{ free} \}.\]
The composition is given by
\[ (g,D)\circ (f,C) = (g\circ f, D\oplus g(C)).\]

Let $\SI_R$ be the category whose objects are finitely generated symplectic free modules over $R$ and its morphisms are given by isometries. Here a free module of rank $2n$ together with a bilinear form $\langle \ ,\,\rangle_{\Sp}$ is symplectic if there is a basis $\{e_1,e'_1,\dots, e_n,e'_n\}$ such that $\langle e_i,e_j\rangle_{\Sp} = \langle e'_i,e'_j\rangle_{\Sp} =0$ and $\langle e_i,e'_j\rangle_{\Sp} = - \langle e'_i,e_j\rangle_{\Sp} = \delta_{ij}$. Isometries are always injective but not necessarily bijective.
\end{Def}

The property by which we chose $\VIC$ and $\SI$ for our purpose is pointed out by the following remark. 

\begin{rem}\label{rem:VIC(m,n)}
A skeleton of $\VIC_R$ is given by the full subcategory on the objects $\{R^n\}_{n\in\NN}$ and
\[ \Hom_{\VIC_R}(R^m,R^n) \cong \begin{cases} \GL_nR/\GL_{n-m}R &\text{ if $n\ge m$,}\\ \emptyset &\text{ otherwise.}\end{cases} \]
Composition is given by group multiplication:
\begin{align*}
\quot{\GL_nR}{\GL_{n-m}R} \times \quot{\GL_mR}{\GL_{m-l}R} &\longrightarrow \quot{\GL_nR}{\GL_{n-l}R}\\
(g\GL_{n-m}R, h\GL_{m-l}R) &\longmapsto gh\GL_{n-l}R
\end{align*}

Similarly a skeleton of $\SI_R$ is given by the full subcategory on the objects $\{ R^{2n}\}_{n\in\NN}$ and
\[ \Hom_{\SI_R}(R^{2m},R^{2n}) \cong \begin{cases} \Sp_{2n}R/\Sp_{2n-2m}R &\text{ if $n\ge m$,}\\ \emptyset &\text{ otherwise.}\end{cases} \]
Composition is also given by group multiplication.
\end{rem}

\subsection{$\VIC$-- and $\SI$--modules}

Let us fix a commutative ring $R$.

\begin{Def}\label{Def:rational}
Let $\C$ be a category, then \emph{$\C$--modules} are functors from $\C$ to the category $\xmod{\QQ}$ of vector spaces over $\QQ$. Note that for $\C=\VIC_R$ and $\C=\SI_R$, it is enough to consider the effect of such a functor $V$ on the skeleton which is indexed by the natural numbers. By these means we will write $V_n$ for image of $R^n$ or $R^{2n}$ under $V$, respectively. Furthermore we will denote the image of the standard embedding $R^n\to R^{n+1}$ and $R^{2n} \to R^{2n+2}$ by $\phi_n\colon V_n \to V_{n+1}$.

We call $\VIC_\QQ$-- and $\SI_\QQ$--modules $V$ \emph{rational}
if all group homomorphisms $\GL_n\QQ \to \GL(V_n)$ and $\Sp_{2n}\QQ \to \GL(V_n)$ are rational.
\end{Def}

%

%

We will consider representable $\C$--modules as free.

\begin{Def}
Denote the representable functors $\QQ [\Hom_{\VIC_R}(R ^m,-)]$ and $\QQ [\Hom_{\SI_R}(R ^{2m},-)]$ uniformly by $M(m)$.

We call $\VIC_R$-- and $\SI_R$--modules $V$ \emph{generated in ranks $\le m$} if there is a surjection
\[ \bigoplus_{i\in I}M(m_i)  \surject V\]
where $m_i\le m$ for all $i \in I$. (The index set $I$ is allowed to be infinite.)

We say $V$ is \emph{generated in finite rank} if it is generated in ranks $\le m$ for some $m\in \NN$.
\end{Def}

There is a good reason to consider $M(m)$ free. By the Yoneda Lemma, a homomorphism
\[ M(m) \longrightarrow V\]
for some $\C$--module $V$ is determined by the image of $\id\in M(m)_m$ in $V_m$. Also if
\[ \bigoplus_{i\in I}M(m_i)  \surject V,\]
the smallest submodule of $V$ that contains the images of $\id\in M(m_i)_{m_i}$ is $V$ itself. Thus $V$ is generated by those images, which all lie in ranks $\le m$.

We will need the following propositions later to provide sequences of representations with a functorial structure. They are the natural generalizations of Church--Ellenberg--Farb \cite[Rem 3.3.1]{CEF} to $\VIC_R$ and $\SI_R$. Randal-Williams--Wahl \cite[Prop 4.2]{RW} prove it in a more general setup. 

\begin{prop}\label{prop:VICmod}
Let $\{V_n\}_{n\in \NN}$ be a sequence of $\GL_n R$--representations and let $\phi_n \colon V_n \to V_{n+1}$ be $\GL_n R$--equivariant maps. Then $\GL_{n-m} R$ acts trivially on the image of $V_m$ in $V_n$  if and only if there is a $\VIC_R$--module $V$ with $V(R^n) = V_n$ and $\phi_n$ is the image of the standard embedding $R^n \to R^{n+1}$.
\end{prop}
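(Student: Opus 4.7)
The plan is to use the skeletal description of $\VIC_R$ from \autoref{rem:VIC(m,n)}: a morphism $R^m\to R^n$ is a coset $g\,\GL_{n-m}R$, and every such morphism factors (non-uniquely) as $g\circ\iota_m^n$, where $\iota_m^n\colon R^m\inject R^n$ is the standard embedding paired with its standard complement $R^{n-m}$, and $g\in\GL_nR$. Two representatives $g,g'$ yield the same morphism iff $g^{-1}g'\in\GL_{n-m}R$, embedded block-diagonally in $\GL_nR$ as $h\mapsto\mathrm{diag}(I_m,h)$. Throughout I will write $\rho_n$ for the $\GL_nR$--action on $V_n$ and $\phi_{[m,n]}:=\phi_{n-1}\circ\cdots\circ\phi_m$.

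For the ``only if'' direction I observe that for any $h\in\GL_{n-m}R\subset\GL_nR$ the $\VIC_R$--composition $(h,0)\circ(\iota_m^n,R^{n-m})$ equals $(\iota_m^n,R^{n-m})$ itself, since such an $h$ fixes $R^m$ pointwise and preserves $R^{n-m}$. Applying a putative $\VIC_R$--module structure $V$ then yields $\rho_n(h)\circ\phi_{[m,n]}=\phi_{[m,n]}$, i.e.\ $\GL_{n-m}R$ acts trivially on the image of $V_m$ in $V_n$.

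For the ``if'' direction I would define $V$ on objects by $V(R^n):=V_n$ and on the morphism represented by $g\,\GL_{n-m}R$ by $V(g\,\GL_{n-m}R):=\rho_n(g)\circ\phi_{[m,n]}$. Well-definedness on cosets is exactly the hypothesis: replacing $g$ by $gh$ with $h\in\GL_{n-m}R$ changes the value by $\rho_n(h)$, which acts trivially on $\phi_{[m,n]}(V_m)$. Identities are clearly preserved. For composition of $h\,\GL_{m-l}R$ followed by $g\,\GL_{n-m}R$ I would use that iterating the $\GL_nR$--equivariance of the $\phi_n$ gives $\phi_{[m,n]}\circ\rho_m(h)=\rho_n(\tilde h)\circ\phi_{[m,n]}$ with $\tilde h:=\mathrm{diag}(h,I_{n-m})$, so
\[ \rho_n(g)\,\phi_{[m,n]}\,\rho_m(h)\,\phi_{[l,m]} \;=\; \rho_n(g\tilde h)\,\phi_{[l,n]}, \]
matching $V$ applied to the composite coset $(g\tilde h)\,\GL_{n-l}R$ from \autoref{rem:VIC(m,n)}. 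The only slightly delicate bookkeeping lies in this composition check, namely transporting the $\GL_mR$--action along the block-diagonal inclusion into $\GL_nR$; no genuine obstacle arises, since the whole statement is essentially a reformulation of the fact that $\Hom_{\VIC_R}(R^m,R^n)$ is the quotient of $\GL_nR$ by the stabilizer of the standard embedding.
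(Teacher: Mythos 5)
Your proof is correct, and it is essentially the intended argument: the paper does not write out a proof but defers to the analogous statement of Church--Ellenberg--Farb and to Randal-Williams--Wahl, whose content is exactly the direct verification you give via the skeletal description $\Hom_{\VIC_R}(R^m,R^n)\cong \GL_nR/\GL_{n-m}R$ of \autoref{rem:VIC(m,n)}, with well-definedness on cosets being precisely the trivial-action hypothesis and the composition check following from equivariance of the $\phi_n$.
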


\begin{prop}\label{prop:SImod}
Let $\{V_n\}_{n\in \NN}$ be a sequence of $\Sp_{2n} R$--representations and let $\phi_n \colon V_n \to V_{n+1}$ be $\Sp_{2n} R$--equivariant maps. Then $\Sp_{2n-2m} R$ acts trivially on the image of $V_m$ in $V_n$  if and only if there is a $\VIC_R$--module $V$ with $V(R^{2n}) = V_n$ and $\phi_n$ is the image of the standard embedding $R^{2n} \to R^{2n+2}$.
\end{prop}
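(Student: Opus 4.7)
The plan is to use the skeleton description of $\SI_R$ given in \autoref{rem:VIC(m,n)}, so that an $\SI_R$--module amounts to a sequence $\{V_n\}_{n\in\NN}$ of $\Sp_{2n}R$--representations together with, for each $n\ge m$, a map
\[ \Sp_{2n}R/\Sp_{2n-2m}R \times V_m \longrightarrow V_n \]
that is compatible with group multiplication and whose restriction to the identity coset gives the iterated standard inclusion $\phi_{n,m}:= \phi_{n-1}\circ \cdots \circ \phi_m\colon V_m\to V_n$.

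For the ``only if'' direction, I would argue as follows. Given an $\SI_R$--module $V$, the standard inclusion $R^{2m}\to R^{2n}$ corresponds to the identity coset in $\Sp_{2n}R/\Sp_{2n-2m}R$, and any $h\in\Sp_{2n-2m}R$ (viewed as block-diagonal in $\Sp_{2n}R$) stabilises this coset. Functoriality then forces $h$ to act trivially on the image of $\phi_{n,m}$.

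For the ``if'' direction, I would construct the functor $V$ on the skeleton. Given a morphism represented by a coset $g\Sp_{2n-2m}R\in\Hom_{\SI_R}(R^{2m},R^{2n})$, set
\[ V(g\Sp_{2n-2m}R)(v) \;=\; g\cdot \phi_{n,m}(v). \]
The triviality hypothesis guarantees well-definedness: if $g' = gh$ with $h\in\Sp_{2n-2m}R$, then $g'\cdot\phi_{n,m}(v) = g\cdot h\cdot\phi_{n,m}(v) = g\cdot\phi_{n,m}(v)$, since $h$ acts trivially on the image of $\phi_{n,m}$. Functoriality on the skeleton is checked using $\Sp_{2m}R$--equivariance of the $\phi_n$'s: for cosets $g\Sp_{2n-2m}R$ and $h\Sp_{2m-2l}R$ and $v\in V_l$,
\[ g\cdot\phi_{n,m}\bigl(h\cdot\phi_{m,l}(v)\bigr) \;=\; g\cdot h\cdot\phi_{n,m}(\phi_{m,l}(v)) \;=\; gh\cdot\phi_{n,l}(v), \]
which matches $V\bigl((gh)\Sp_{2n-2l}R\bigr)(v)$. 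The value on the identity coset recovers $\phi_{n,m}$, so $\phi_n$ is indeed the map induced by the standard embedding.

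The main obstacle is bookkeeping rather than mathematics: one must carefully verify that the assignment factors through the coset and respects composition. Both reduce to the equivariance of the $\phi_n$ and the stabiliser-triviality hypothesis, so no deeper input is needed; the argument is the direct analogue of \autoref{prop:VICmod} and of \cite[Rem 3.3.1]{CEF}, and one could equally well invoke the general framework of \cite[Prop 4.2]{RW}.
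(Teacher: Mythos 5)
Your proposal is correct and is essentially the intended argument: the paper itself gives no proof of \autoref{prop:SImod}, deferring to \cite[Rem 3.3.1]{CEF} and \cite[Prop 4.2]{RW}, and your explicit construction of the functor on the skeleton of \autoref{rem:VIC(m,n)} --- well-definedness on cosets from the stabiliser-triviality hypothesis, functoriality from $\Sp_{2m}R$--equivariance of the $\phi_k$ --- is exactly that natural generalization. You also implicitly fix the typo in the statement, whose conclusion should read ``$\SI_R$--module'' rather than ``$\VIC_R$--module''.
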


\noindent\fbox{\parbox[b]{\textwidth}{\textit{In what follows we often want to treat $\VIC_\QQ$ and $\SI_\QQ$ uniformly. To that end we will write $\C$ instead of $\VIC_\QQ$ and $\SI_\QQ$ when we want to make a statement that is true for both categories. We will also write $G_n$ for $\GL_n\QQ$ or $\Sp_{2n}\QQ$ depending on the setting.}}}

\subsection{Stability degree}

Analogous to the approach by Church--Ellenberg--Farb \cite[Sec 3.2]{CEF} we want to introduce the stability degree of $\C$--modules. We first make the observation that there is an injection $G_a \times G_{n-a} \to G_n$ given by a block sum. Therefore we can consider the coinvariants
\[ \QQ \tens[{\QQ G_{n-a}}] \Res^{G_n}_{G_a\times G_{n-a}}V_n\]  as a $\QQ G_{a}$--module for any $\QQ G_n$--module $V_n$. Furthermore $\phi \colon V_n \to V_{n+1}$ induces a $G_a$-map
\[\xymatrix{ \displaystyle \QQ  \tens[\QQ G_{n-a}] V_n \ar[r]^<<<<{\phi_*} & \displaystyle \QQ \tens[\QQ G_{n-a}] V_{n+1} \ar@{->>}[r] & \displaystyle \QQ \tens[\QQ G_{n+1-a}] V_{n+1}}.\]

\begin{Def}\label{Def:tau}
Let $\tau_{n,a}$ be the functor \[\tau_{n,a} V_n = \QQ  \tens[{\QQ G_{n-a}}] \Res^{G_n}_{G_a\times G_{n-a}}V_n\] from $\QQ G_n$--modules to $\QQ G_a$--modules. We say a $\C$--module $V$ has \emph{injectivity degree}, \emph{surjectivity degree} or \emph{stability degree} $\le s$ if the map
\[ \xymatrix{ \tau_{a+n,a} V_{a+n} \ar[r]^<<<<{\phi_*} & \tau_{a+n+1,a} V_{a+n+1}} \]
is injective, surjective or bijective, respectively, for all nonnegative integers $a$ and all $n\ge s$.
\end{Def}

\begin{rem}\label{rem:tau}
Note that if
\[ \Res_{G_{a}\times G_{n-a}}^{G_n} V_n \cong \bigoplus W_i\otimes W'_i,\]
with simple $\QQ G_{a} \otimes \QQ G_{n-a}$--modules $W_i\otimes W'_i$, then
\[ \tau_{n,a}V_n \cong \bigoplus_{W'_i \text{ trivial} } W_i.\]
\end{rem}

The following two propositions are analogues to \cite[Lem 3.2.7]{CEF} and follow immediately from the previous remark and \hyperref[cor:branchingGL]{Corollaries \ref{cor:branchingGL}} and \ref{cor:branchingSp}, respectively.


\begin{prop}\label{prop:GLtau}
Let $\lambda^+,\lambda^-$ be partitions with $\ell(\lambda^+) +  \ell(\lambda^-) \le n$ and let $\mu^+, \mu^-$ be partitions with $\ell(\mu^+) +  \ell(\mu^-) \le m$. Assume further $|\lambda^+|+|\lambda^-| \le |\mu^+|+|\mu^-|$, then
\[ [\tau_{n,m}\GL_n(\lambda^+,\lambda^-), \GL_m(\mu^+,\mu^-)] \\= \begin{cases} 1&\text{if $\mu^+= \lambda^+$ and $\mu^-= \lambda^-$}\\ 0&\text{otherwise.}\end{cases}\]

\end{prop}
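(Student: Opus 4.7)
The plan is to reduce the computation of the multiplicity $[\tau_{n,m}\GL_n(\lambda^+,\lambda^-), \GL_m(\mu^+,\mu^-)]$ to an outer branching multiplicity, and then to quote Corollary \ref{cor:branchingGL}.

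First I would decompose the restriction
\[ \Res^{\GL_n\QQ}_{\GL_m\QQ\times \GL_{n-m}\QQ} \GL_n(\lambda^+,\lambda^-) \cong \bigoplus_{\mu^+,\mu^-,\nu^+,\nu^-} \bigl(\GL_m(\mu^+,\mu^-) \otimes \GL_{n-m}(\nu^+,\nu^-)\bigr)^{\oplus c_{(\mu^\pm,\nu^\pm)}} \]
into simple $\GL_m\QQ\otimes \GL_{n-m}\QQ$--representations. By Remark \ref{rem:tau}, taking coinvariants under $\GL_{n-m}\QQ$ kills every summand in which the second tensor factor is nontrivial, so
\[ \tau_{n,m}\GL_n(\lambda^+,\lambda^-) \cong \bigoplus_{\mu^+,\mu^-} \GL_m(\mu^+,\mu^-)^{\oplus c_{(\mu^\pm,\emptyset,\emptyset)}}. \]
Hence $[\tau_{n,m}\GL_n(\lambda^+,\lambda^-),\GL_m(\mu^+,\mu^-)]$ equals the multiplicity of $\GL_m(\mu^+,\mu^-)\otimes \GL_{n-m}(\emptyset,\emptyset)$ in the restriction above.

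The assumption $|\lambda^+|+|\lambda^-|\le |\mu^+|+|\mu^-|$ places us exactly in the hypothesis of Corollary \ref{cor:branchingGL} with $\nu^+=\nu^-=\emptyset$. That corollary states that under this inequality the multiplicity is $1$ if $(\mu^+,\mu^-)=(\lambda^+,\lambda^-)$ and $\nu^+=\nu^-=\emptyset$, and is $0$ otherwise. Combined with the previous paragraph this yields precisely the claimed formula.

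There is no real obstacle here: the proposition is a direct repackaging of Corollary \ref{cor:branchingGL} via the coinvariant description of $\tau_{n,m}$. The only point requiring a moment's care is to note that the $\GL_{n-m}\QQ$--coinvariants of a rational irreducible $\GL_{n-m}\QQ$--representation are zero unless that representation is the trivial one $\GL_{n-m}(\emptyset,\emptyset)$, which is immediate by semisimplicity and Schur's lemma.
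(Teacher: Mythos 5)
Your proof is correct and is exactly the paper's argument: the paper derives Proposition \ref{prop:GLtau} immediately from Remark \ref{rem:tau} (the coinvariant description of $\tau_{n,m}$, which discards all summands with nontrivial $\GL_{n-m}\QQ$--factor) together with Corollary \ref{cor:branchingGL} applied with $\nu^+=\nu^-=\emptyset$. Nothing is missing.
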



\begin{prop}\label{prop:Sptau}
Let $\lambda$ be a partition with $\ell(\lambda)\le n$ and let $\mu$ be a partition with $\ell(\mu) \le m$. Assume further $|\lambda| \le |\mu|$, then
\[ [\tau_{n,m} \Sp_{2n}(\lambda), \Sp_{2m}(\mu)] = \begin{cases} 1&\text{if $\mu= \lambda$}\\ 0&\text{otherwise.}\end{cases}\]
\end{prop}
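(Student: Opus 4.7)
My plan is to reduce the statement directly to the branching rule for the symplectic restriction, exactly as the paper signals by citing \autoref{rem:tau} and \autoref{cor:branchingSp}.

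First I would unwind the definition of $\tau_{n,m}$ using \autoref{rem:tau}. Writing
\[ \Res^{\Sp_{2n}\QQ}_{\Sp_{2m}\QQ \times \Sp_{2n-2m}\QQ} \Sp_{2n}(\lambda) \cong \bigoplus_{\mu',\nu'} \big(\Sp_{2m}(\mu') \otimes \Sp_{2n-2m}(\nu')\big)^{\oplus c_{\mu'\nu'}}, \]
the coinvariants functor kills every summand whose second tensor factor is nontrivial, so
\[ \tau_{n,m}\Sp_{2n}(\lambda) \cong \bigoplus_{\mu'} \Sp_{2m}(\mu')^{\oplus c_{\mu'\emptyset}}. \]
Hence the multiplicity $[\tau_{n,m}\Sp_{2n}(\lambda), \Sp_{2m}(\mu)]$ equals the multiplicity of $\Sp_{2m}(\mu) \otimes \Sp_{2n-2m}(\emptyset)$ in the restriction.

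Second, I would invoke \autoref{cor:branchingSp}: under the hypothesis $|\lambda| \le |\mu|$, that corollary states that the multiplicity of $\Sp_{2m}(\mu) \otimes \Sp_{2n-2m}(\nu)$ in the restriction is $1$ exactly when $\mu = \lambda$ and $\nu = \emptyset$, and $0$ otherwise. Specializing $\nu = \emptyset$ gives the required value: $1$ when $\mu = \lambda$ and $0$ otherwise.

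There is essentially no obstacle here—the proposition is a direct translation of \autoref{cor:branchingSp} into the language of $\tau$. The only thing to double-check is that the $\Sp_{2n-2m}\QQ$-coinvariants indeed pick out precisely the $\Sp_{2n-2m}(\emptyset)$-isotypic component in a semisimple rational representation, which is immediate from the fact that the trivial representation is self-dual and irreducibility forces $[\Sp_{2n-2m}(\nu), \Sp_{2n-2m}(\emptyset)] = \delta_{\nu,\emptyset}$. Thus the argument is a one-line combination of the two cited facts.
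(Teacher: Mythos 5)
Your proof is correct and is precisely the paper's argument: the paper derives \autoref{prop:Sptau} "immediately" from \autoref{rem:tau} (coinvariants pick out the $\Sp_{2n-2m}(\emptyset)$--isotypic part of the restriction) together with \autoref{cor:branchingSp}, exactly as you do. No gaps; the extra check that coinvariants equal the trivial isotypic component in the semisimple setting is a fine, if routine, inclusion.
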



The next proposition is the analogue of \cite[Prop 3.1.7]{CEF}. It turns out to be much more complicated than in the case of symmetric groups studied in \cite{CEF}. Later we will only need finite surjectivity degree of $M(m)$, but we give both injectivity degree and surjectivity degree for completeness sake.

\begin{prop}\label{prop:stabdegM(m)}
$M(m)$ has injectivity degree $\le0$ and surjectivity degree $\le 2m$.
\end{prop}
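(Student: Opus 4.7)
The plan is to reduce both claims to orbit-theoretic computations on the sets $\Hom_\C(\QQ^m,\QQ^{a+n})$. By Remark~\ref{rem:VIC(m,n)} we have $M(m)_{a+n}=\QQ[\Hom_\C(\QQ^m,\QQ^{a+n})]$, and $\tau_{a+n,a}$ takes $G_n$-coinvariants for $G_n\subset G_{a+n}$ embedded block-diagonally on the last coordinates, so
\[\tau_{a+n,a}M(m)_{a+n}\;\cong\;\QQ\bigl[\,G_n\backslash\Hom_\C(\QQ^m,\QQ^{a+n})\,\bigr]\]
as $\QQ G_a$-modules. Under this identification $\phi_\ast$ is the $\QQ$-linearization of the orbit map induced by post-composing a morphism with the standard inclusion $\QQ^{a+n}\hookrightarrow\QQ^{a+n+1}$ (resp.\ $\QQ^{2(a+n)}\hookrightarrow\QQ^{2(a+n+1)}$). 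So the content is to verify injectivity for $n\ge 0$ and surjectivity for $n\ge 2m$ of this orbit map.

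For injectivity I would parametrize a $G_n$-orbit by invariants of $(f,C)$ relative to the splitting $\QQ^{a+n}=\QQ^a\oplus\QQ^n$. In the $\VIC_\QQ$ case these are the composition $\pi_a f\colon\QQ^m\to\QQ^a$, the subspace $K=\ker(\pi_n f)\subseteq\QQ^m$ (subject to $K\cap\ker(\pi_a f)=0$ and $\dim K\ge m-n$), together with residual complement data parametrized modulo the stabilizer of $f$ in $G_n$. In the $\SI_\QQ$ case one replaces the kernel invariant by the isomorphism type of the pullback symplectic form on $\im(\pi_n f)$. All these invariants are intrinsic and unchanged by post-composition with the standard inclusion; only the constraint $\dim K\ge m-n$ weakens as $n$ grows, so no new identifications occur. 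A direct check on stabilizers confirms the formal injectivity: given $g\in G_{n+1}$ with $g\cdot\phi(f_1,C_1)=\phi(f_2,C_2)$, the top-left $n\times n$ block of $g$ (modified, if necessary, by an element of the stabilizer of $\phi(f_1,C_1)$ to ensure invertibility) yields the required $g'\in G_n$.

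For surjectivity, given $(f',C')\in\Hom_\C(\QQ^m,\QQ^{a+n+1})$ I must produce a $G_{n+1}$-translate $(\tilde f,\tilde C)$ satisfying $\im\tilde f\subseteq\QQ^{a+n}$ (and, for $\VIC$, $e_{a+n+1}\in\tilde C$). In the $\VIC_\QQ$ case an elementary dimension count yields $\dim(C'\cap\QQ^{n+1})\ge n+1-m$, so for $n\ge m$ any nonzero $v\in C'\cap\QQ^{n+1}$ lies outside $\pi_n(\im f')$ (since $C'\cap\im f'=0$) and extends to $g\in\GL_{n+1}$ sending $v\mapsto e_{a+n+1}$ and $\pi_n(\im f')$ into $\QQ^n$. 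In the $\SI_\QQ$ case one needs $g\in\Sp_{2(n+1)}$ with $g(\im\pi_n f')\subseteq\QQ^{2n}$. By Witt's extension theorem, such $g$ exists iff some $2n$-dimensional symplectic subspace of $\QQ^{2(n+1)}$ contains $\im\pi_n f'$, equivalently iff the symplectic complement $(\im\pi_n f')^\perp$ contains a $2$-dimensional symplectic subspace. A short computation gives the form rank of $(\im\pi_n f')^\perp$ as $2(n+1)-2d+2k$, where $d=\dim\im\pi_n f'\le 2m$ and $2k$ is the form rank of $\im\pi_n f'$; this is $\ge 2$ precisely when $n\ge d-k$, and the worst case $d=2m$, $k=0$ (fully isotropic image, arising for instance when $\pi_a f'$ is already an isometric embedding) forces $n\ge 2m$.

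The principal obstacle is the symplectic surjectivity step: the totally isotropic image is the tight case and produces the $2m$ bound. For $\VIC_\QQ$ alone the bound $m$ would suffice, but the uniform bound $2m$ is adequate for the proposition.
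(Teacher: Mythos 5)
Your reduction to the orbit map on $\Hom_\C(\QQ^m,-)$ modulo the block subgroup is exactly the paper's starting point (it phrases it as double cosets $G_n\backslash G_{a+n}/G_{a+n-m}$), and your symplectic surjectivity argument is essentially the paper's: the paper finds a hyperbolic pair in the at least $(n+2)$--dimensional intersection $\QQ^{2(n+1)}\cap g\QQ^{2(a+n+1-m)}$, which is the same non-isotropicity phenomenon you extract from the rank of $(\im \pi f')^\perp$. The serious gap is injectivity. What must be shown is that if two morphisms into $\QQ^{a+n}$ become $G_{n+1}$--equivalent after the standard inclusion, they were already $G_n$--equivalent, and your argument for this is an assertion, not a proof: the ``complete set of invariants'' (kernel of $\pi_n f$, residual complement data modulo the stabilizer, or in the $\SI$ case the isomorphism type of the pulled-back form on $\im\pi_n f$) is neither pinned down nor shown to classify orbits, and showing that this classification is unchanged under the inclusion is precisely the point at issue. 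Likewise, ``take the top-left block of $g$, modified by a stabilizer element to make it invertible'' names the difficulty rather than resolving it: in the paper this is a genuine case analysis for $\GL$ (the cases where $\bar x$ or $\bar y$ fail to be invertible) and, in the symplectic case, a substantial computation (one must produce $\alpha\in\Sp_2\QQ$ with $\alpha-x_0$ invertible and correct by explicit $\beta,\gamma$ so that the resulting block stays symplectic). None of that is supplied or replaced by an alternative argument, so the injectivity half of the proposition is not established.

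There is also a concrete error in your $\VIC_\QQ$ surjectivity step. From $C'\cap\im f'=0$ you conclude that a nonzero $v\in C'\cap\QQ^{n+1}$ avoids $\pi_{n+1}(\im f')$; this does not follow, since a vector can lie in the projection of the image without lying in the image. In fact your claimed bound $m$ for $\VIC_\QQ$ is false: take $m=1$, $a=1$, $n=1$, $f'(e_1)=e_1+e_2$ and $C'=\langle e_2,\,e_3-e_1\rangle$ in $\QQ^3$. Any $g\in\GL(\langle e_2,e_3\rangle)$ keeping the image inside $\langle e_1,e_2\rangle$ must send $e_2$ to a multiple of itself, and then $e_3\notin gC'$, so this morphism is not equivalent to one coming from $\QQ^{2}$ even though $n\ge m$; here indeed $C'\cap\langle e_2,e_3\rangle=\langle e_2\rangle=\pi_{23}(\im f')$. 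The step is repairable for the bound actually claimed in the proposition: for $n\ge 2m$ one has $\dim(C'\cap\QQ^{n+1})\ge n+1-m>m\ge\dim\pi_{n+1}(\im f')$, so a suitable $v$ exists and your construction of $g$ goes through, recovering surjectivity degree $\le 2m$ (the paper instead finds an $(m+1)$--dimensional subspace of $g^{-1}\QQ^{n+1}\cap\QQ^{a+n+1-m}$ and normalizes $g$ by hand). So the surjectivity half can be fixed, but the injectivity half needs an actual argument.
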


\begin{proof}
From \autoref{rem:VIC(m,n)} we get that
\[ M(m)_{a+n} \cong \QQ [G_{a+n}/ G_{a+n-m}] .\]
The functor $\tau_{a+n,a}$ takes coinvariants with respect to the $G_n$-action from the left, so
\[ \tau_{a+n,a} M(m)_{a+n} \cong   \QQ [G_{a+n}/ G_{a+n-m}]_{G_n} \cong   \QQ \left[ \doublequot{G_{n}}{G_{a+n}}{ G_{a+n-m}}\right].\] 
To understand the actions better, let us specify to the general linear case and let $G_{a+n} = \GL_{a+n} \QQ$ act on the $(a+n)$-dimensional vector space with the basis
\[ \QQ^{a+n} = \QQ[ e_1, \dots, e_a,e_{a+1}, \dots, e_{a+n} ].\]
Then $G_n=\GL_n \QQ$ is the subgroup acting on the subspace
\[ \QQ^n = \QQ[ e_{a+1}, \dots, e_{a+n}]\]
and fixing $e_1,\dots, e_a$. Similarly $G_{a+n-m} = \GL_{a+n-m}\QQ$ is the subgroup acting on
\[ \QQ^{a+n-m} = \QQ[ e_{m+1}, \dots, e_{a+n}]\]
and fixing $e_1,\dots, e_m$.

The map $\phi_* \colon \tau_{a+n,a} M(m)_{a+n} \to \tau_{a+n+1,a} M(m)_{a+n+1}$ from \autoref{Def:tau} is then given by
\[ \QQ \left[ \doublequot{G_{n}}{G_{a+n}}{ G_{a+n-m}}\right] \longrightarrow  \QQ \left[ \doublequot{G_{n+1}}{G_{a+n+1}}{ G_{a+n+1-m}}\right] \]
which is in fact induced by the natural map
\[ \doublequot{G_{n}}{G_{a+n}}{ G_{a+n-m}} \longrightarrow  \doublequot{G_{n+1}}{G_{a+n+1}}{ G_{a+n+1-m}} \]
on the basis. Here we think of $G_{a+n} = \GL_{a+n}\QQ$ as a subgroup of $G_{a+n+1}= \GL_{a+n+1}$ by the standard inclusion
\[ \QQ[ e_1, \dots, e_a,e_{a+1}, \dots, e_{a+n} ] \subset \QQ[ e_1, \dots, e_a,e_{a+1}, \dots, e_{a+n}, e_{a+n+1} ].\]
Hence it suffices to consider injectivity and surjectivity for the mapping between bases.

We start with injectivity. Let $g\in G_{a+n}$, $x\in G_{n+1}$, $y\in G_{a+n+1-m}$ and assume $g'=xgy \in G_{a+n}$. We want to prove that $g$ and $g'$ represent the same element in $G_{n}\backslash G_{a+n} / G_{a+n-m}$. To do so, we use the following block matrix form.
\[ \begin{pmatrix}g'&0\\0&1\end{pmatrix} = \underbrace{\begin{pmatrix}\bar x&\tilde x \\\tilde{\tilde x} &x_0\end{pmatrix}}_{x}\begin{pmatrix}g&0\\0&1\end{pmatrix} \underbrace{\begin{pmatrix}\bar y&\tilde y \\\tilde{\tilde y} &y_0\end{pmatrix}}_{y} = \begin{pmatrix}\bar xg\bar y + \tilde x\tilde{\tilde y}&\bar xg\tilde{ y}+\tilde xy_0 \\\tilde{\tilde x}g\bar y+ x_0\tilde{\tilde y} &\tilde{\tilde x}g\tilde y+x_0y_0\end{pmatrix}\]

Now let us consider $\C= \VIC_\QQ$. Assume first $\bar x$ is invertible. Then 
\[ \bar xg \tilde y + \tilde xy_0  = 0 \implies  \tilde y + (\bar xg)^{-1}\tilde xy_0 = 0 \implies  -y_0^{-1}\tilde y = (\bar xg)^{-1}\tilde x.\]
The second implication is because not both $y_0$ and $\tilde y$ can be zero as $y$ is invertible. Thus
\[ \bar x\cdot g \cdot (\bar y - y_0^{-1}\tilde y \tilde{\tilde y}) = \bar x g \bar y + (\bar x g)(\bar x g)^{-1}\tilde x \tilde{\tilde y} = g',\]
where $\bar x \in \GL_{n}\QQ$ and $(\bar y - y_0^{-1}\tilde y \tilde{\tilde y})\in \GL_{a+n-m}\QQ$. The same argument works, when $\bar y$ is invertible. So assume that both $\bar x $ and $\bar y$ are not invertible. Then 
\[ \tilde x \not\in \im \bar x\quad \text{and}\quad \tilde{\tilde y}^{T} \not\in \im \bar y^{T}.\]
Then 
\[ \bar x (g\tilde y) = -y_0 \tilde x\]
implies $y_0=0$ and $ \bar x g \tilde y = 0$ and
\[ (\tilde{\tilde x} g) \bar y = - x_0 \tilde{\tilde y}\]
implies $x_0 = 0$ and $\tilde{\tilde x} g \bar y = 0$. Also
\[ 1 =  \tilde{\tilde x}g\tilde y + x_0y_0 = \tilde{\tilde x} g \tilde y.\]
Thus
\[ (\bar x + \tilde x\tilde{\tilde x}) \cdot g \cdot (\bar y + \tilde y \tilde{\tilde y}) = \bar x g \bar y + \bar x g \tilde y \tilde{\tilde y}+\tilde x \tilde{\tilde x}g\bar y +\tilde x \tilde{\tilde x} g\tilde y \tilde{\tilde y} = g'.\]

Now consider $\C=\SI_\QQ$. Denote by
\[ \Omega_n = \begin{pmatrix} 0&1 \\ -1&0 \\ &&\ddots\\ &&&0&1 \\ &&&-1&0 \end{pmatrix}\]
 the Gram matrix of the standard symplectic form on a $2n$-dimensional vector space. Then
\[ x \cdot \begin{pmatrix} g\tilde y \\ y_0\end{pmatrix} = \begin{pmatrix} 0\\1\end{pmatrix}\]
implies that
\[  \begin{pmatrix} g\tilde y \\ y_0\end{pmatrix} = \Omega_{n+1}x^T\Omega_{n+1}^{-1}\begin{pmatrix} 0\\1\end{pmatrix} = \Omega_{n+1}\begin{pmatrix} \tilde{\tilde x}&x_0\end{pmatrix}^{T}\Omega_1^{-1} .\]
Thus
\[  y_0 = \Omega_1 x_0^T \Omega_1^{-1}, \quad g\tilde y = \Omega_n\tilde{\tilde x}^T\Omega_1^{-1} \quad\text{and} \quad \tilde{\tilde x} g = \Omega_1\tilde y^T\Omega_n^{-1}.\] 
Because $x$ and $y$ are symplectic we derive
\[ \tilde y^T \Omega_n\bar y = - y_0^T\Omega_1 \tilde{ \tilde y}, \quad \bar x \Omega_n\tilde{\tilde x}^T = -  \tilde x\Omega_1 x_0^T\quad\text{and} \quad\tilde{\tilde x} \Omega_n\tilde {\tilde x}^T = \Omega_1-x_0\Omega_1x_0^T.\]
One can check that we can find an $\alpha \in \Sp_2\QQ = \SL_2\QQ$ such that $\alpha - x_0$ is invertible. This is equivalent to $\alpha^{-1}-y_0$ being invertible. Let $\beta, \gamma\in \GL_{2n}\QQ$ such that
\[ \beta \tilde x = \tilde x (\alpha - x_0)^{-1} \quad \text{and} \quad \tilde{\tilde y} \gamma = (\alpha^{-1}-y_0)^{-1}\tilde{\tilde y}.\]
Then
\[ \tilde x + \beta \tilde x x_0 = \beta \tilde x \alpha \quad \text{and} \quad \tilde{\tilde y} + y_0\tilde{\tilde y} \gamma = \alpha^{-1}\tilde{\tilde y}\gamma.\]
We can now calculate:
\begin{align*} \left(\bar x + \beta{ \tilde x \tilde{ \tilde x}}\right)g\left(\bar y + { \tilde y \tilde{ \tilde y}}\gamma\right) =\ & \bar x g \bar y + \beta{ \tilde x \tilde{ \tilde x}g\bar y} +  {  \bar x g\tilde y \tilde{ \tilde y}}\gamma +  \beta{ \tilde x \tilde{ \tilde x}g \tilde y \tilde{\tilde y}}\gamma\\
=\ & \bar x g \bar y + \beta{ \tilde x \Omega_1^{-1} \tilde y^T \Omega_n\bar y} +  {  \bar x \Omega_n\tilde{\tilde x}^T \Omega_1^{-1} \tilde{ \tilde y}}\gamma +  \beta{ \tilde x \tilde{ \tilde x}\Omega_n\tilde{\tilde x}^T \Omega_1^{-1} \tilde{\tilde y}}\gamma\\
=\ &\bar x g \bar y - \beta{  \tilde x \Omega_1^{-1}y_0^T\Omega_1 \tilde{\tilde y}} -  {\tilde{ x} \Omega_1x_0^T\Omega_1^{-1} \tilde{ \tilde y}}\gamma +  \beta{ \tilde x (\Omega_1-x_0\Omega_1x_0^T) \Omega_1^{-1}\tilde{\tilde y}}\gamma\\
=\ &\bar x g \bar y - \beta{  \tilde x   x_0 \tilde{\tilde y}} -  {\tilde{ x} y_0  \tilde{ \tilde y}}\gamma +  \beta{ \tilde x (1-x_0y_0)\tilde{\tilde y}}\gamma\\
=\ &\bar x g \bar y +  \tilde x \tilde{\tilde y} + \beta \tilde x \tilde{\tilde y} \gamma - ( \tilde x + \beta \tilde x x_0)(\tilde{\tilde y}+y_0\tilde{\tilde y}\beta) \\
=\ &\bar x g \bar y + \tilde x \tilde{\tilde y} = g'
\end{align*}
Also:
\begin{align*}  \left(\bar x + \beta{ \tilde x \tilde{ \tilde x}}\right) \Omega_n \left(\bar x + {\beta \tilde x \tilde{ \tilde x}}\right)^T =\ & \bar x\Omega_n\bar x^T + \bar x \Omega_n \tilde{ \tilde x}^T\tilde x^T\beta^T + {\beta\tilde{  x}\tilde{\tilde x} \Omega_n \bar x^T } +\beta\tilde{  x}\tilde{\tilde x}\Omega_n\tilde{ \tilde x}^T\tilde x^T\beta^T\\
=\ & \bar x\Omega_n\bar x^T  -  \tilde x\Omega_1 x_0^T\tilde x^T\beta^T - {\beta\tilde{  x}x_0 \Omega_1 \bar x^T } +\beta\tilde{  x}(\Omega_1-x_0\Omega_1x_0^T)\tilde x^T\beta^T\\
=\ &\bar x\Omega_n\bar x^T  +\tilde x\Omega_1 \tilde x^T + \beta\tilde x \Omega_1\tilde x^T \beta^T - (\tilde x + \beta \tilde x x_0 )\Omega_1(\tilde x + \beta \tilde x x_0 )^T\\
=\ &\bar x\Omega_n\bar x^T  +\tilde x\Omega_1 \tilde x^T + \beta\tilde x \Omega_1\tilde x^T \beta^T -\beta \tilde x\alpha \Omega_1(\beta \tilde x \alpha )^T\\
=\ &\bar x\Omega_n\bar x^T + \tilde x\Omega_1 \tilde x^T = \Omega_n
\end{align*}
And analogously:
\[ \left(\bar y + { \tilde y \tilde{ \tilde y}}\gamma\right)^T\Omega_n \left(\bar y + { \tilde y \tilde{ \tilde y}}\gamma\right) = \Omega_n\]
This proves injectivity degree $\le 0$.

For surjectivity consider first $\C = \SI_\QQ$. Let $g\in \Sp_{2(a+n+1)}\QQ$. Because $2(n+1) + 2(a+n+1-m) = 2(a+n+1) + 2(n+1-m)$, the intersection $\QQ ^{2(n+1)}\cap g\QQ ^{2(a+n+1-m)}$ is at least $2(n+1-m)$-dimensional. Assume $n\ge 2m$, then $2(n+1-m) \ge n+2$. Therefore it cannot be an isotropic subspace of $\QQ ^{2(n+1)}$. In particular, there are vectors $v,v'\in \QQ ^{2(n+1)}\cap g\QQ ^{2(a+n+1-m)}$ such that $\langle v, v'\rangle = 1$. We hence may find an $h_1\in \Sp_{2(a+n+1-m)}\QQ$ that sends $(e_{a+n+1}, e'_{a+n+1})$ to $(g^{-1}v,g^{-1}v')$ and an $h_2\in \Sp_{2(n+1)}\QQ$ that sends $(v,v')$ to $(e_{a+n+1}, e'_{a+n+1})$. Then $h_2gh_1 \in \Sp_{2(a+n)}\QQ$, thus $\Sp_{2(n+1)}\QQ g\Sp_{2(a+n+1-m)}\QQ$ is the image of $\Sp_{2n}\QQ h_2gh_1\Sp_{2(a+n-m)}\QQ$ and surjectivity degree is $\le 2m$. 

And finally for $\C =\VIC_\QQ$ let $g\in \GL_{a+n+1}\QQ$ and $n \ge 2m$. We need to find a $g'\in \GL_{a+n}$ such that
\[ \GL_{n+1}\QQ \cdot g \cdot \GL_{a+n+1-m} \QQ = \GL_{n+1}\QQ \cdot g' \cdot \GL_{a+n+1-m} \QQ.\]
That means we may do matrix transformations on $g$ from the left by $\GL_{n+1}\QQ$ and from the right by $\GL_{a+n+1-m}\QQ$ to transform to $g'$. 

Because $n \ge 2m$, we have $(n+1) + (a+n+1-m) \ge (a+n+1) + m+1$. Thus the intersection $g^{-1}\QQ ^{n+1}\cap \QQ ^{a+n+1-m}$ is at least $(m+1)$-dimensional. Let $V$ be an $(m+1)$-dimensional subspace of this intersection. We can $g$ transform such that 
\[ V = \langle\underbrace{e_{a+n+1-m}, \dots, e_{a+n+1}}_{m+1} \rangle,\]
$g$ is the identity on $V$ and sends $\langle e_{m+1}, \dots, e_{a+n-m}\rangle$ to $\langle e_1 , \dots, e_{a+n-m}\rangle$. Because
\[ \big( g\langle e_1, \dots, e_m\rangle \big)^\perp \,\cap \, V\]
is not trivial, we may transform $g$ that it sends $\langle e_1, \dots, e_m\rangle$ to $\QQ ^{a+n}$. In summary we transformed $g$ to have the form
\[ \begin{pmatrix} A &B &0&0\\ C &D &0&0\\ E&0&1&0\\0&0&0&1\end{pmatrix}\]
where the matrices $A, B, C, D, E$ have the dimensions $a\times m$, \mbox{$a\times(a+n-2m)$}, \mbox{$(n-m)\times m$}, \mbox{$(n-m)\times (a+n-m)$,} $m\times m$, respectively. And therefore $g\in \GL_{a+n}\QQ$, which proves surjectivity degree $\le 2m$.
\end{proof}

 \begin{cor}
 A $\C$--module $V$ that is generated in ranks $\le m$ has surjectivity degree $\le 2m$.
 \end{cor}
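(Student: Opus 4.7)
The plan is to deduce the statement from \autoref{prop:stabdegM(m)} by a standard right-exactness argument, using the presentation of $V$ by representable functors. First, since $V$ is generated in ranks $\le m$, we fix a surjection
\[ p\colon F:=\bigoplus_{i\in I} M(m_i) \surject V \]
with $m_i\le m$ for all $i\in I$.

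Next, I would observe that the functor $\tau_{n,a}$ is right exact and commutes with arbitrary direct sums. Indeed, $\tau_{n,a}$ is the composition of the exact restriction $\Res^{G_n}_{G_a\times G_{n-a}}$ with the coinvariants functor $\QQ\tens[\QQ G_{n-a}]-$, and coinvariants (being a left adjoint, or equivalently a colimit) preserve surjections and commute with direct sums. Hence, for each $a,n\ge 0$, the naturality square
\[
\xymatrix{
\tau_{a+n,a} F_{a+n} \ar@{->>}[d]_{\tau_{a+n,a}(p_{a+n})} \ar[r]^{\phi_*} & \tau_{a+n+1,a} F_{a+n+1} \ar@{->>}[d]^{\tau_{a+n+1,a}(p_{a+n+1})} \\
\tau_{a+n,a} V_{a+n} \ar[r]_{\phi_*} & \tau_{a+n+1,a} V_{a+n+1}
}
\]
commutes, and the vertical arrows are surjective.

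The key input is now that the top horizontal arrow is surjective for $n\ge 2m$. This follows from \autoref{prop:stabdegM(m)}: for each $i$, the map $\tau_{a+n,a} M(m_i)_{a+n} \to \tau_{a+n+1,a} M(m_i)_{a+n+1}$ is surjective whenever $n\ge 2m_i$, and since $m_i\le m$, the condition $n\ge 2m$ suffices uniformly. Because $\tau_{n,a}$ commutes with direct sums, the top arrow in the square above is the direct sum of these surjections, hence surjective for all $n\ge 2m$ and all $a\ge 0$.

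A direct diagram chase then finishes the argument: given any element in $\tau_{a+n+1,a}V_{a+n+1}$, lift it along the right vertical surjection, pull it back along the top horizontal surjection (using $n\ge 2m$), and push down through the left vertical map; commutativity shows the result maps to the chosen element. There is no genuine obstacle here; the only thing to double-check is the right-exactness and direct-sum-compatibility of $\tau_{n,a}$, which reduces to the corresponding (obvious) properties of the coinvariants functor.
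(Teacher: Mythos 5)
Your proposal is correct and is essentially the paper's own argument: the paper likewise takes the presentation $\bigoplus_i M(m_i)\surject V$, invokes right-exactness of $\tau$ together with \autoref{prop:stabdegM(m)}, and concludes via the same commutative square. Your write-up simply spells out the direct-sum compatibility and the diagram chase that the paper leaves implicit.
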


 \begin{proof}
 Note that $\tau$ is right exact, thus the following commutative diagram yields the assertion.
 \[
 \begin{gathered}[b]
 \xymatrix{
 \tau_{a+n,a} \bigoplus_{i\in I}M(m_i)_{a+n} \ar@{->>}[r]\ar@{->>}[d] &   \tau_{a+n,a}V_{a+n}\ar[d]\\
 \tau_{a+n+1,a} \bigoplus_{i\in I}M(m_i)_{a+n+1} \ar@{->>}[r] &   \tau_{a+n+1,a}V_{a+n+1}
 }\\[-\dp\strutbox]
\end{gathered}\qedhere\]
 \end{proof}

%
%

\subsection{Noetherian property}

\begin{Def}
Let 
\[ \Phi_aV = \bigoplus_{n\in \NN} \tau_{a+n,a}V_{a+n}\]
be the graded module over the graded polynomial ring $\QQ[T]$. $T$ acts via
\[\phi_*\colon \tau_{a+n,a}V_{a+n} \longrightarrow \tau_{a+n+1,a}V_{a+n+1}\]
from \autoref{Def:tau}.
\end{Def}

\begin{proof}[Proof of  {\hyperref[thmA:VICnoeth]{Theorems \ref{thmA:VICnoeth}}} and \ref{thmA:SInoeth}]
Let $V$ be a rational $\SI_\QQ$--module that is finitely generated in ranks $\le a$. Then the $\QQ[T]$--module
\[ \Phi_aV = \bigoplus_{n\in\NN} \tau_{a+n,a} V_{a+n}\]
is finitely generated in degrees $\le 2a$. The submodule
\[ \Phi_aW \subset \Phi_aV\]
is a finitely generated $\QQ[T]$--module because $\QQ[T]$ is a noetherian ring. Let $x_1\in\Phi_aW_{n_1}$, \dots, $x_r\in \Phi_aW_{n_r}$ be homogeneous generators and $w_1\in W_{a+n_1}$, \dots, $w_r\in W_{a+n_r}$ their respective preimages. Denote the submodule generated by $w_1,\dots, w_r$ by $\widetilde W\subset W$. Then
\[ \Phi_aW/\widetilde W =0.\]

Let $n\ge a$. We want to conclude that $(W/\widetilde W)_n=0$ to prove the assertion. Assume otherwise that $\Sp_{2n}(\lambda)$ is an irreducible constituent of $(W/\widetilde W)_n$. Then
\[ \tau_{n,a} \Sp_{2n}(\lambda) = 0 \]
which implies that
\[ [\Res^{\Sp_{2n}\QQ}_{\Sp_{2a}\QQ\times \Sp_{2n-2a}\QQ} \Sp_{2n}(\lambda), \Sp_{2a}(\mu)\otimes \Sp_{2n-2a}(\emptyset)] =0\]
for all $\mu$. On the other hand, we know that the image of $V_a$ in $V_n$ generates $V_n$ as an $\Sp_{2n}\QQ$--representation. Because $\Sp_{2n-2a}\QQ$ acts trivially on that image, the statement is equivalent to
\[ \Ind^{\Sp_{2n}\QQ}_{\Sp_{2a}\QQ\times \Sp_{2n-2a}\QQ}  V_a\otimes \Sp_{2n-2a}(\emptyset) \surject V_n\]
being surjective. But as seen above
\[ [\Ind^{\Sp_{2n}\QQ}_{\Sp_{2a}\QQ\times \Sp_{2n-2a}\QQ}  V_a\otimes \Sp_{2n-2a}(\emptyset), \Sp_{2n}(\lambda)] =0.\]
Contradiction to $\Sp_{2n}(\lambda)$ being a constituent of $V_n$.

The argument goes through exactly the same for a $\VIC_\QQ$--module $V$ that is finitely generated in ranks $\le a$.
\end{proof}

\subsection{Representation stability}

We will prove the main technical result of this paper. The idea for the proof stems from the proof of  \cite[Prop 3.3.3]{CEF}.

\begin{lem}\label{lem:minsize}
Let $V$ be a rational $\C$--module and $s\in \mathbb N$. Then there is a submodule $W$ such that $W_n$ contains all irreducible constituents of $V_n$ that are indexed by (pairs of) partitions of size at least $s$.
\end{lem}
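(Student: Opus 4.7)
The plan is to take $W$ to be defined in the most economical way compatible with the statement, namely $W_n\subseteq V_n$ is the sum of all isotypic components of $V_n$ corresponding to irreducible $G_n$-representations whose labelling (pair of) partitions has size at least $s$. Since $V_n$ is a rational $G_n$-representation and therefore semisimple, $W_n$ is a well-defined $G_n$-invariant subspace which by construction contains every irreducible constituent of $V_n$ of size $\ge s$. By \hyperref[prop:VICmod]{Propositions~\ref{prop:VICmod}} and~\ref{prop:SImod} it only remains to verify $\phi_n(W_n)\subseteq W_{n+1}$ for all $n$, since once this holds the $\C$-submodule structure is automatic.

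To establish this, let $\pi_{n+1}\colon V_{n+1}\to V_{n+1}^{<s}$ denote the projection onto the sum of isotypic components of size $<s$. It is $G_{n+1}$-equivariant, hence in particular $G_n$-equivariant. By semisimplicity, $\phi_n(W_n)\subseteq W_{n+1}$ is equivalent to $\pi_{n+1}\circ\phi_n|_{W_n}=0$, and by Schur's lemma this in turn reduces to showing that the $G_n$-representations $W_n$ and $V_{n+1}^{<s}$ have no irreducible constituent in common.

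The key ingredient is size-monotonicity of restriction: if $G_n(\mu')$ appears in $\Res^{G_{n+1}}_{G_n} G_{n+1}(\mu)$, then $|\mu'|\le|\mu|$. For $\SI_\QQ$ this follows at once from \autoref{thm:Spres}, since the interlacing inequality gives $\mu'_i\le\mu_i$ and therefore $|\mu'|\le|\mu|$. For $\VIC_\QQ$ one uses \autoref{thm:GLres}: encoding the pairs as weight sequences in $\ZZ^{n+1}$ and $\ZZ^{n}$ with interlacing $\mu_i\ge\mu'_i\ge\mu_{i+1}$, a positive entry $\mu'_i>0$ forces $\mu_i\ge\mu'_i>0$, so the set of indices of positive entries of $\mu'$ is contained in that of $\mu$ and each term is bounded by the corresponding term of $\mu$; the symmetric argument applies to negative entries. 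Summing yields $|\mu'^+|\le|\mu^+|$ and $|\mu'^-|\le|\mu^-|$, hence $|\mu'|\le|\mu|$.

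Applying this coordinate-wise to the decomposition of $V_{n+1}^{<s}$ shows that every $G_n$-irreducible constituent of $V_{n+1}^{<s}$ has label of size $<s$, while by construction every $G_n$-irreducible constituent of $W_n$ has label of size $\ge s$. These two sets of labels are disjoint, so the required vanishing holds and $W$ is a $\C$-submodule with the desired property. The only non-formal step is the size-monotonicity of the one-step restriction, which for the general linear case is the subtler of the two and is the real content of the argument.
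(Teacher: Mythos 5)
Your proof is correct, and it arrives at the same $W$ as the paper --- the sum of the isotypic components of $V_n$ labelled by (pairs of) partitions of size at least $s$ --- but via a different key input. The paper's proof is a two-line appeal to \autoref{cor:branchingGL} and \autoref{cor:branchingSp}, i.e.\ to size-monotonicity of restriction as extracted from the stable branching rules (\autoref{thm:outerGL}, \autoref{thm:outerSp}) and the modification rules: constituents of $V_m$ of size at least $s$ can only map to constituents of $V_n$ of size at least $s$, so $W$ is a submodule. You instead reduce the submodule check to the single standard maps $\phi_n$ (using $G_n$-invariance of $W_n$ and the fact that every skeleton morphism is a group element composed with a standard embedding, cf.\ \autoref{rem:VIC(m,n)}), and you prove the needed one-step size-monotonicity directly from the interlacing branching rules \autoref{thm:GLres} and \autoref{thm:Spres}, packaged through an equivariant projection and Schur's lemma; your index-shifting argument for the negative entries in the $\GL$ case is exactly right. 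What your route buys is independence from the Littlewood--Richardson and modification-rule machinery for this particular lemma, at the cost of a slightly longer argument; the paper's route is shorter only because those corollaries are already in place for use elsewhere (e.g.\ in \autoref{prop:GLtau} and \autoref{prop:Sptau}). One small presentational point: \autoref{prop:VICmod} and \autoref{prop:SImod} assert that a consistent sequence with the trivial-action property carries \emph{some} $\C$--module structure, so the cleaner justification that checking $\phi_n(W_n)\subseteq W_{n+1}$ suffices for $W$ to be a submodule of $V$ is the factorization of morphisms from \autoref{rem:VIC(m,n)} together with the $G_n$-invariance of $W_n$ --- which is precisely the argument you sketch, so nothing is missing mathematically.
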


\begin{proof}
By \hyperref[cor:branchingGL]{Corollaries \ref{cor:branchingGL}} and \ref{cor:branchingSp} irreducible constituents of $V_m$ that are indexed by partitions of size at least $s$ only map to irreducible constituents of $V_n$ that are indexed by partitions with size at least $s$. Therefore the described $W$ is a submodule of $V$.
\end{proof}

\begin{thm}\label{thm:fg implies repstable}
Let $V$ be a rational $\VIC_\QQ$--module or $\SI_\QQ$--module that is generated in finite rank, then $V$ is multiplicity stable. 
\end{thm}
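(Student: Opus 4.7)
The plan is to mirror the strategy of \cite[Prop 3.3.3]{CEF}, using the noetherian behavior over $\QQ[T]$ in combination with the branching rules already collected.

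Fix an irreducible index $\lambda$ (a partition in the $\SI_\QQ$ case, a pair of partitions in the $\VIC_\QQ$ case); the goal is to show that the multiplicity $c_{\lambda,n}$ of the corresponding irreducible in $V_n$ is eventually constant in $n$. First I reduce to a module whose constituents have bounded partition size. Set $K=|\lambda|+1$. By \autoref{lem:minsize}, the submodule $W^K\subset V$ containing all constituents of size $\ge K$ is well-defined, and $c_{\lambda,n}(V)=c_{\lambda,n}(V/W^K)$. Passing to $V/W^K$ (which is again generated in finite rank), I may assume every irreducible constituent $G_n(\mu')$ of $V_n$ satisfies $|\mu'|\le|\lambda|$. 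In particular only finitely many indices $\mu'$ can appear. A further reduction to a finitely generated sub-$\C$-module sufficient to realize the $\lambda$-isotypic contribution then lets me assume $V$ is finitely generated in ranks $\le a$.

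Next, I bring in the graded $\QQ[T]$-module $\Phi_a V=\bigoplus_n\tau_{a+n,a}V_{a+n}$ from the proof of the noetherian theorems. By \autoref{prop:stabdegM(m)} and its corollary, $\Phi_a V$ is a finitely generated graded $\QQ[T]$-module with generators in degrees $\le 2a$. Since the $G_a$-action commutes with the action of $T$, for each simple $G_a(\mu)$ the $\mu$-isotypic summand of $\Phi_a V$ is a sub-$\QQ[T]$-module, hence finitely generated by noetherianness of $\QQ[T]$. The structure theorem for finitely generated graded $\QQ[T]$-modules then yields that $\dim\Hom_{G_a}(G_a(\mu),\tau_{a+n,a}V_{a+n})$ eventually becomes a constant $d_\mu$ in $n$.

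The final ingredient converts these stabilized $\tau$-multiplicities into statements about $c_{\mu,n}(V)$. Combining \autoref{prop:Sptau} (respectively \autoref{prop:GLtau}) with the branching formula \autoref{thm:outerSp} (respectively \autoref{thm:outerGL}) yields an identity of the form
\[ [G_a(\mu),\tau_{a+n,a}V_{a+n}]\,=\,c_{\mu,a+n}(V)\,+\sum_{\mu':\,|\mu'|>|\mu|}\kappa_{\mu,\mu'}\,c_{\mu',a+n}(V), \]
where each $\kappa_{\mu,\mu'}$ is a nonnegative integer independent of $n$ (a sum of products of Littlewood--Richardson coefficients coming from the outer branching formula), and the sum is finite because the constituent sizes in $V$ are now bounded by $|\lambda|$. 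A downward induction on $|\mu|$ completes the argument: for $|\mu|=|\lambda|$ the sum is empty and $c_{\mu,a+n}=d_\mu$ eventually, and the inductive step follows by subtraction.

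The main obstacle I anticipate is the last step: one must verify that the coefficients $\kappa_{\mu,\mu'}$ appearing in the expansion of $\tau_{a+n,a}G_{a+n}(\mu')$ are genuinely independent of $n$ once $n$ is large enough. This rests on the fact that, in the stable range, the modification rules of Propositions \ref{prop:modGL} and \ref{prop:modSp} return the unmodified Littlewood--Richardson output, so that the branching data stabilize. All other steps are essentially bookkeeping built on results already established in the excerpt.
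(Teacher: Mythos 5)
Your overall skeleton (cut off large constituents with \autoref{lem:minsize}, pass to the coinvariants functor $\tau$, and identify multiplicities via the branching rules \autoref{prop:GLtau}/\autoref{prop:Sptau}) is the paper's approach. But there is a genuine gap at the step where you claim you "may assume $V$ is finitely generated in ranks $\le a$" and then treat $\Phi_a V$ as a \emph{finitely generated} graded $\QQ[T]$--module. The theorem only assumes $V$ is generated in finite rank, and the definition explicitly allows infinitely many generators, so the $V_n$ may be infinite dimensional and the multiplicities $c_{\lambda,n}$ may be infinite cardinals. The corollary to \autoref{prop:stabdegM(m)} gives only surjectivity degree $\le 2a$, i.e.\ that multiplication by $T$ on $\Phi_a V$ is eventually surjective; it does \emph{not} give finite generation over $\QQ[T]$, which in the paper is obtained only under the additional hypothesis of finite generation of $V$. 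Your proposed "further reduction to a finitely generated sub-$\C$-module sufficient to realize the $\lambda$-isotypic contribution" is asserted without justification and is not available in this generality (if $c_{\lambda,n}$ is infinite no such submodule exists, and even when it is finite there is no argument that the submodule's multiplicities agree with those of $V$ uniformly in $n$). Consequently the appeal to the structure theorem for finitely generated graded $\QQ[T]$--modules does not apply as written. The fix is exactly what the paper does: after the size reduction, surjectivity of $\phi_*\colon \tau_{a+n,a}(V/W)_{a+n}\to \tau_{a+n+1,a}(V/W)_{a+n+1}$ for $n$ large makes the sequence of multiplicities $[\tau_{a+n,a}(V/W)_{a+n},G_m(\mu)]$ weakly decreasing (as cardinals), and a weakly decreasing sequence of cardinals stabilizes; no noetherian or structure-theorem input is needed. (If you are willing to add the hypothesis that each $V_n$ is finite dimensional, then generation in ranks $\le a$ does imply finite generation and your $\QQ[T]$ argument goes through, but that is a weaker statement than the theorem.)

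Two smaller points. First, the issue you flag as the "main obstacle" (the $n$-independence of the coefficients $\kappa_{\mu,\mu'}$) is avoidable altogether: the paper performs the reduction of \autoref{lem:minsize} at threshold $|\mu|+1$ separately for each fixed index $\mu$, so that \autoref{prop:GLtau}/\autoref{prop:Sptau} give $[\tau_{n,m}(V/W)_n,G_m(\mu)]=c_{\mu,n}$ on the nose, with no correction terms and no downward induction; in fact, since you only need $\mu=\lambda$, your induction reduces to its base case anyway. Second, your $\tau$--parameter should be taken to be (at least) $\ell(\mu)$, as in the paper, rather than the generation rank $a$: if $\ell(\mu)>a$ the representation $G_a(\mu)$ vanishes and detects nothing, while \autoref{lem:fglength} only bounds lengths of constituents by $2a$. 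Neither of these is fatal, but the finite-generation step must be replaced by the surjectivity-degree/decreasing-cardinals argument for the proof to cover the stated hypothesis.
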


\begin{proof}
Let  $V$ be an $\SI_\QQ$--module that is generated in finite rank. Let us write
\[ V_n \cong \bigoplus \Sp_{2n}(\lambda)^{\oplus c_{\lambda,n}}.\]
We want to prove that $c_{\lambda,n}$ is independent of large $n$.

Fix a partition $\mu$ with length $m = \ell(\mu)$ and let $W$ be the submodule of all constituents of $V$ with size at least $|\mu|+1$ as described in \autoref{lem:minsize}. Then 
\[ (V/W)_n = V_n/W_n \cong \bigoplus_{|\lambda| \le |\mu|} \Sp_{2n}(\lambda)^{\oplus c_{\lambda,n}}.\]
Now we want to count the multiplicity of the constituent $\Sp_{2m}(\mu)$ in
\[ \tau_{n,m} (V/W)_n \cong \bigoplus_{|\lambda|\le |\mu|} \tau_{n,m} \Sp_{2n}(\lambda)^{\oplus c_{\lambda,n}}.\]
By \autoref{prop:Sptau}, we get the equation:
\[
 [\tau_{n,m} (V/W)_n, \Sp_{2m}(\mu)] 
 = c_{\mu,n}
\]
 Because $V$ is generated in finite rank, so is $V/W$, which therefore has finite surjectivity degree. Thus 
 \[  c_{\mu,n} =  [\tau_{n,m} (V/W)_n, \Sp_{2m}(\mu)] \]
is a sequence of decreasing cardinal numbers once $n$ is large enough. Because the cardinal numbers are well ordered (see for example H\"onig \cite{Hoe}), this sequence stabilizes. 
 
By the exact same argument we prove that if $V$ is a $\VIC_\QQ$--module that is generated in finite  rank, $[V_n, \GL_n(\mu^+,\mu^-)]$ is independent of large $n$.
\end{proof}

%
%

\begin{lem}\label{lem:fglength}
Let $V$ be a rational $\C$--module that is generated in ranks $\le m$, then for an irreducible constituent
\[ \GL_n(\lambda^+,\lambda^-) \subset V_n\]
or
\[ \Sp_n(\lambda)\subset V_n\]
the length is bounded
\[  \ell(\lambda^+)+\ell(\lambda^-)\le 2m\]
or
\[  \ell(\lambda) \le 2m,\]
respectively.
\end{lem}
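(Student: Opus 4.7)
The plan is to pass from the generation hypothesis to the existence of a $G_{n-a}$-invariant vector inside each irreducible constituent, and then read off the length bound directly from the restriction corollaries \autoref{cor:GLres} and \autoref{cor:Spres}.

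First, I would unpack what it means for $V$ to be generated in ranks $\le m$. By the Yoneda Lemma, a surjection $\bigoplus_i M(m_i) \twoheadrightarrow V$ with $m_i \le m$ corresponds to elements $v_i \in V_{m_i}$, and componentwise surjectivity translates to the statement that for each $n$, the $G_n$-span of $\{ \phi_{m_i,n}(v_i)\}_i$ equals $V_n$. In particular $V_n$ is the $G_n$-span of $\bigcup_{a \le m} \phi_{a,n}(V_a)$, where $\phi_{a,n}\colon V_a \to V_n$ is induced by the standard inclusion. Using \autoref{prop:VICmod} (resp.\ \autoref{prop:SImod}), we know $G_{n-a}$ acts trivially on $\phi_{a,n}(V_a)$, so $\phi_{a,n}(V_a) \subseteq V_n^{G_{n-a}}$.

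Next, let $\rho$ be an irreducible constituent of $V_n$. Because rational representations of $G_n$ are semisimple, $V_n$ decomposes into isotypic components and there is a nonzero $G_n$-equivariant projection $p\colon V_n \to V_n[\rho]$. Since $V_n[\rho]$ is a quotient of $V_n$ and $V_n$ is $G_n$-generated by $\bigcup_{a\le m}\phi_{a,n}(V_a)$, at least one $\phi_{a,n}(V_a)$ with $a \le m$ must have nontrivial image under $p$. That image lies in $V_n[\rho]^{G_{n-a}}$ by $G_{n-a}$-equivariance, so $\rho$ contains a nonzero $G_{n-a}$-invariant vector. Equivalently, the trivial representation of $G_{n-a}$ appears as a constituent of $\Res^{G_n}_{G_{n-a}} \rho$.

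Finally, I would apply \autoref{cor:GLres} with $(\mu^+,\mu^-) = (\emptyset, \emptyset)$, which yields $\ell(\lambda^+) + \ell(\lambda^-) \le 2a \le 2m$, and \autoref{cor:Spres} with $\mu = \emptyset$, which yields $\ell(\lambda) \le 2a \le 2m$. The main conceptual step is the middle one: translating the generation hypothesis into the existence of $G_{n-a}$-invariants inside every irreducible constituent. Once that translation is in hand, the length bound is nothing but the extremal case of the branching inequalities already established. No delicate estimates are needed beyond the semisimplicity of rational $G_n$-representations, so I do not anticipate any serious obstacle.
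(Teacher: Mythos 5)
Your proof is correct and follows essentially the same route as the paper: the paper likewise observes that the image of $V_m$ generates $V_n$ with $G_{n-m}$ acting trivially on it, deduces that the trivial representation of the complementary group occurs in the restriction of each irreducible constituent, and then applies \autoref{cor:GLres} resp.\ \autoref{cor:Spres} with trivial $\mu$ to get the bound $2m$. Your isotypic-projection argument just makes the middle step (and the reduction to a single generator rank $a\le m$) more explicit than the paper bothers to.
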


\begin{proof}
We will prove this lemma in the symplectic case using \autoref{cor:Spres}. The proof for the general linear groups goes analogously using \autoref{cor:GLres}.

The image of $V_m$ generates $V_n$ as an $\Sp_{2n}\QQ$--representation, and $\Sp_{2n-2m}\QQ$ acts trivial on it. If $\Sp_{2n}(\lambda)$ is a constituent of $V_n$, there must therefore be a constituent $\Sp_{2m}(\mu)$ of $V_m$ such that
\[ \Sp_{2m}(\mu)\otimes \Sp_{2n-2m}(\emptyset)\subset \Res^{\Sp_{2n}\QQ}_{\Sp_{2m}\QQ\times \Sp_{2n-2m}\QQ} \Sp_{2n}(\lambda).\]
Thus
\[ [\Res^{\Sp_{2n}\QQ}_{ \Sp_{2n-2m}\QQ} \Sp_{2n}(\lambda), \Sp_{2n-2m}(\emptyset)] \neq 0\]
which by \autoref{cor:Spres} implies that
\[ 0 = \ell(\emptyset) \ge \ell(\lambda) -2m.\qedhere\]
\end{proof}

\begin{proof}[Proof of {\hyperref[thmA:VICrepstab]{Theorems \ref{thmA:VICrepstab}}} and \ref{thmA:SIrepstab}]
Let $V$ be finitely generated in ranks $\le m$. By \autoref{thm:fg implies repstable}, $V$ is multiplicity stable. Let $\ker \phi$ be the submodule of $V$ given by
\[ \ker\phi_n \subset V_n.\]
Then by \autoref{thmA:SInoeth}, $\ker\phi$ is also finitely generated, which implies that $\ker\phi_n=0$ for all $n\in\NN$ large enough. This is injectivity.

Surjectivity is equivalent to being generated in finite rank.

Let us finally specialize to the symplectic groups. The proof for the general linear groups is the same. We want to prove that there are only finitely many partitions $\lambda$ such that $\Sp_{2n}(\lambda)$ is a constituent of $V_n$ for some $n\in\NN$. From \autoref{lem:fglength}, we already know that $\ell(\lambda)$ must be at most $2m$. For every fixed $n\in\NN$, there are certainly only finitely many partitions $\lambda$ such that $\Sp_{2n}(\lambda)$ is a constituent of $V_n$, because the submodule $W\subset V$ defined by
\[ W_m = \begin{cases}0&m<n\\V_m&m\ge n\end{cases}\]
would not be finitely generated otherwise.
We now consider
\[ \tau_{n,2m}V_n\]
for $n\ge 2m$. For all constituents $\Sp_{2n}(\lambda)$ of $V_n$, we know from \autoref{prop:Sptau} that
\[ \Sp_{4m}(\lambda) \subset \tau_{n,2m}V_n.\]
Because $V$ has surjectivity degree $\le 2m$, we also know that all constituents of $V_n$ for $n \ge 4m$ must already be included in the finitely generated 
\[ \tau_{4m,2m}V_{4m}.\]
This finishes the proof that $V$ is uniformly representation stable.
\end{proof}

\section{N--series and their associated Lie algebras}\label{sec:N-series}

The following definitions follow Lazard \cite{La}. He defines a generalization of a central series, such that as for the lower central series, we get a graded Lie algebra structure on the associated graded of the filtration.

\begin{Def}
For a group $\Gamma$ a filtration $\nu \Gamma$
\[ \dots \le \nu_2\Gamma \le \nu_1\Gamma = \Gamma \]
is called an \emph{N--series} if  $[ \nu_i\Gamma, \nu_j\Gamma] \le \nu_{i+j}\Gamma$.
\end{Def}

\begin{Def}[Lazard {\cite[Thm I.2.1]{La}}]\label{Def:gr} The rationalized graded Lie algebra 
\[ \gr(\nu\Gamma) = \bigoplus_{i\ge 1} \gr_i(\nu\Gamma)\]
associated to an N-series $\nu$ is defined by
\[ \gr_i(\nu\Gamma) = \nu_i\Gamma/\nu_{i+1}\Gamma \tens[\ZZ] \QQ.\]
The bracket is given by the (group) commutator.
\end{Def}

\begin{rem}\label{rem:N-series}
Let $\nu$ be an $N$--series of a group $\Gamma$. Then $\Gamma$ acts via conjugation on $\nu_i\Gamma$ for every $i\in \NN$ because
\[ [g,n] \in \nu_{i+1}\Gamma \le \nu_i\Gamma\]
for $g\in \Gamma$ and $n\in \nu_i\Gamma$. The same argument shows that $\Gamma$ acts trivially on $\gr_i(\nu\Gamma)$.
\end{rem}

\begin{Def}\label{Def:lcs}
For every group $\Gamma$ its \emph{lower central series} $\gamma\Gamma$ defined by \[\gamma_1 \Gamma = \Gamma\quad\text{and}\quad\gamma_{i+1} = [\Gamma, \gamma_i\Gamma]\] is an N-series. $\gr(\gamma\Gamma)$ is sometimes called the \emph{Malcev Lie algebra} associated to $\Gamma$.
\end{Def}

\begin{ex}
Let $\Gamma=F_n$ be the free group with $n$ generators. Then its Malcev Lie algebra is the free Lie algebra $\mathcal L_n$ with $n$ generators. 
\end{ex}

\begin{Def}\label{Def:And}
For the automorphism group $\Aut(\Gamma)$ of a group $\Gamma$,  
\[ \alpha_i \Aut(\Gamma) = \ker( \Aut(\Gamma) \to \Aut(\Gamma/\gamma_{i+1}\Gamma))\]
is called the \emph{Andreadakis filtration}.
\end{Def}

Andreadakis  \cite[Thm 1.1(ii)]{An} showed that $\alpha$ is an $N$-series of $\alpha_1\Aut(\Gamma)$.

\section{Torelli subgroups of the automorphism groups of free groups}\label{section:IA}

Let $F_n$ denote the free group on $n$ generators, then its abelianization is \[\ZZ^n \cong F_n/ [F_n,F_n].\] The quotient map induces a group homomorphism 
\[ \Aut(F_n) \longrightarrow \Aut(\ZZ^n) = \GL_n(\ZZ)\]
on their automorphism groups because the commutator subgroup $[F_n,F_n]\le F_n$ is characteristic. Nielsen \cite{Ni} proved that $\Aut(F_n)$ is generated by the permutations of the generators $x_1, \dots, x_n$ and the following two automorphisms.
\[ x_i \mapsto \begin{cases} x_1^{-1} &i=1\\x_i&i\neq 1\end{cases}\quad\text{and}\quad x_i \mapsto \begin{cases} x_1x_2 &i=1\\x_i&i\neq 1\end{cases}\]
The images of these automorphisms also generate $\GL_n\ZZ$. Hence the homomorphism between the automorphism groups is surjective.

The \emph{Torelli subgroup $\IA_n$} is defined as the kernel, so we get the following short exact sequence.
\[ 1 \to \IA_n \to \Aut(F_n) \to \GL_n \ZZ \to 1\]
As for every short exact sequence, we get an outer action of $\GL_n\ZZ$ on $\IA_{n}$, ie a group homomorphism
\[  \GL_n\ZZ \longrightarrow \Out(\IA_{n}) =\Aut(\IA_n)/ \Inn(\IA_{n}).\]
This homomorphism is given as follows. Let $g\in \GL_n\ZZ$ and $\tilde g \in \Aut(F_n)$ a preimage of $g$. Then conjugation by $\tilde g$ is an automorphism of $\IA_n$. Another preimage of $g$ is $\tilde gh$ for some $h\in \IA_n$. Then conjugation by $\tilde gh$ is conjugation by $\tilde g$ composed with the inner automorphism defined by $h$.

This outer action gives rise to a $\GL_n\ZZ$--representation on the abelianization $H_1(\IA_{n};\ZZ)$ of $\IA_{n}$ because inner automorphisms act trivially. After rationalizing the $\GL_n\ZZ$--representation
\[ H_1(\IA_{n};\QQ)   \cong {\bigwedge}^2 \QQ^n \otimes \left( \QQ^n \right)^* \]
was computed for example by Kawazumi \cite[Thm 6.1]{Kaw}. It is clearly a restriction of a $\GL_n\QQ$--representation. 

Even more, for every morphism 
\[ (f,C) \in \Hom_{\VIC_\QQ}(\QQ^m,\QQ^{n})\]
we get a unique section $s\colon \QQ^n\to \QQ^m$ of $f$ such that $C=\ker s$ and therefore a well-defined map
\[ H_1(\IA_{m};\QQ) \cong{\bigwedge}^2 \QQ^m \otimes \left( \QQ^m \right)^*  \longrightarrow H_1(\IA_{n};\QQ) \cong {\bigwedge}^2 \QQ^{n} \otimes \left( \QQ^{n} \right)^*,  \]
which turns $\{H_1(\IA_{n};\QQ)\}_{n\in \NN}$ into a $\VIC_\QQ$--module. 

For a morphism 
\[ (f,C)\in \Hom_{\VIC_\ZZ}(\ZZ^m , \ZZ^{n}) \cong \GL_n\ZZ/ \GL_{n-m}\ZZ\]
this can be traced to a group homomorphism
\[\IA_{m} \to \IA_{n}\]
up to inner automorphism of $\IA_{n}$. Here is the reason. Let $K\subset \Aut(F_n)$ be the preimage of $\GL_{n-m}\ZZ$ of the composition
\[ \Aut(F_n) \surject \GL_n \ZZ \surject \Hom_{\VIC_\ZZ}(\ZZ^m,\ZZ^n) \cong \GL_n \ZZ/ \GL_{n-m} \ZZ.\]  For $(f,C)$ we find an automorphism $g \in \Aut(F_n)$, which is uniquely determined up to right multiplication by an element of $K$. Conjugating by $g$ gives an automorphism of $\IA_n$ which can be restricted to a map $\IA_m \to \IA_n$. Because $\Aut(F_{n-m})\subset K$ surjects to $\GL_{n-m}\ZZ$, we can find for every $k\in K$ an $h \in \IA_n$ such that $hk^{-1} \in \Aut(F_{n-m})$. Since $\Aut(F_{n-m})$ commutes with $\IA_m$, the conjugation by $gk$ is the same as the conjugation by $gh$ when restricted to $\IA_m$. Thus $\IA_m \to \IA_n$ is well defined up to inner automorphism of $\IA_n$.

This group monomorphism induces a map
\[ H_1(\IA_m;\ZZ) \longrightarrow H_1(\IA_n;\ZZ).\]
This map is well defined and natural because the group monomorphism is  well defined up to inner automorphisms of $\IA_n$. 

As already been pointed out in \cite[Sec 6.2]{CF}
\[ H_1(\IA_n;\QQ) \cong {\bigwedge}^2 \QQ^n \otimes \left( \QQ^n \right)^* \cong \GL_n\left(\tiny\yng(1),\emptyset\right) \oplus \GL_n\left( \tiny\yng(1,1), \tiny\yng(1) \right) \]
for all $n\ge3$ is uniformly representation stable.

\subsection{Lower central series of $\IA_n$}\label{sec:betaGL}

Let us first consider the lower central series $\gamma\IA_n$ (see \autoref{Def:lcs}) of the Torelli subgroups $\IA_n$.

\begin{prop}\label{prop:VICZ beta}
$\{\gr_i(\gamma\IA_n)\}_{n\in\NN}$ gives rise to a $\VIC_\ZZ$--module.
\end{prop}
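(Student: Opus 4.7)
The plan is to upgrade the construction that was already carried out earlier in this section for $H_1(\IA_n;\QQ)$ so that it works for every quotient of the lower central series. Two ingredients make this essentially automatic.

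First, for each morphism $(f,C)\in\Hom_{\VIC_\ZZ}(\ZZ^m,\ZZ^n)$ the excerpt constructs a group homomorphism $\IA_m\to\IA_n$, well defined up to inner automorphism of $\IA_n$. Since every $\gamma_i\IA_n$ is a characteristic (in fact fully invariant) subgroup of $\IA_n$, any such homomorphism carries $\gamma_i\IA_m$ into $\gamma_i\IA_n$ and hence descends to a map of quotients
\[ \gamma_i\IA_m/\gamma_{i+1}\IA_m \longrightarrow \gamma_i\IA_n/\gamma_{i+1}\IA_n, \]
which after tensoring with $\QQ$ gives a candidate linear map $\gr_i(\gamma\IA_m)\to\gr_i(\gamma\IA_n)$.

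Second, I need this candidate to be independent of the choice of lift $\IA_m\to\IA_n$. Two lifts differ by an inner automorphism of $\IA_n$; but \autoref{rem:N-series} applied to the $N$--series $\gamma$ on $\IA_n$ says exactly that $\IA_n$ acts trivially on $\gr_i(\gamma\IA_n)$. So the induced map on $\gr_i$ is the same for both lifts, and we obtain a canonical $\QQ$--linear map $\gr_i(\gamma\IA_m)\to\gr_i(\gamma\IA_n)$ for every $\VIC_\ZZ$--morphism.

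What remains is functoriality: given $(f,C)$ and $(g,D)$, the composite lifts in $\Aut(F_\bullet)$ agree with the lift of $(g,D)\circ(f,C)$ up to elements of the kernel $K$ used in the earlier construction, and—just as in the $H_1$ discussion—up to inner automorphisms of $\IA_n$. The triviality of the $\IA_n$--action on $\gr_i(\gamma\IA_n)$ again removes this ambiguity, so composition is preserved, and identities are obvious. Thus $n\mapsto\gr_i(\gamma\IA_n)$ defines a functor $\VIC_\ZZ\to\xmod\QQ$, i.e.\ a $\VIC_\ZZ$--module. The main point one has to check carefully is this last step of functoriality, but once one has the trivial $\IA_n$--action on the graded pieces it reduces to exactly the verification that was already implicit in the earlier treatment of $H_1(\IA_n;\QQ)$.
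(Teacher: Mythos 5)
Your proposal is correct and follows essentially the same route as the paper: both arguments produce the structure maps by conjugation with lifts in $\Aut(F_n)$ and kill the ambiguity using that $\IA_n$ acts trivially on $\gr_i(\gamma\IA_n)$ (i.e.\ $[\IA_n,\gamma_i\IA_n]\subset\gamma_{i+1}\IA_n$) together with the triviality of the $\Aut(F_{n-m})$--action, identifying the morphism set with $\GL_n\ZZ/\GL_{n-m}\ZZ$. The paper merely phrases this directly in terms of the $\Aut(F_n)$--action and the quotient $\Aut(F_n)/(\IA_n\cdot\Aut(F_{n-m}))$ rather than routing through the $\IA_m\to\IA_n$ maps from the $H_1$ discussion, so the content is the same.
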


\begin{proof}
Because $\gamma_i\IA_n$ is a characteristic subgroup of $\IA_n$ which is normal in $\Aut(F_n)$, the latter acts on $\gamma_i\IA_n$ by conjugation. For $m\le n$, the automorphism group $\Aut(F_n)$ gives a group homomorphism
\[ \gamma_i\IA_m \longrightarrow \gamma_i\IA_n\]
that descends to the quotients
\[\gr_i(\gamma\IA_m) \longrightarrow \gr_i(\gamma\IA_n).\]
Given $g\in \Aut(F_n)$ and $h\in\Aut(F_m)$ each, the composition
\[ \gr_i(\gamma\IA_l) \stackrel{h}\longrightarrow\gr_i(\gamma\IA_m) \stackrel{g}\longrightarrow \gr_i(\gamma\IA_n)\]
is given by $gh \in \Aut(F_n)$.

Clearly $\Aut(F_{n-m})\subset  \Aut(F_n)$ acts trivially on 
\[\gr_i(\gamma\IA_m) \subset \gr_i(\gamma\IA_n)\]
because is commutes with all subquotients of $\Aut(F_m)$. Furthermore, by definition
\[ [\IA_n, \gamma_i\IA_n]=\gamma_{i+1}\IA_n.\]
Thus also $\IA_n \subset \Aut(F_n)$ acts trivially on
\[\gr_i(\gamma\IA_m) \subset \gr_i(\gamma\IA_n).\]

Therefore 
\[ \quot{\Aut(F_n)}{\IA_n\cdot\Aut(F_{n-m})} \]
gives rise to a homomorphism
\[\gr_i(\gamma\IA_m) \longrightarrow \gr_i(\gamma\IA_n).\]
But this is isomorphic to
\[ \quot{\quot{\Aut(F_n)}{\IA_n}}{\quot{\big(\IA_n\cdot\Aut(F_{n-m})\big)}{\IA_n}} \cong  \quot{\GL_n\ZZ}{\GL_{n-m}\ZZ} \]
because $\IA_n \cap \Aut(F_{n-m}) = \IA_{n-m}$.

This defines a functor $V\colon \VIC_\ZZ \to \xmod\QQ$ with
\[ V_n = \gr_i(\gamma\IA_n).\qedhere\]
\end{proof}

%

\begin{prop}
Let $V$ be a rational $\VIC_\QQ$--module which is uniformly representation stable and assume $V_n$ is finite dimensional for every $n\in\NN$. Then the $k$th degrees $\mathcal L_k(V)$ of the free Lie algebra generated by $V$ is a rational $\VIC_\QQ$--module which is uniformly representation stable and $\mathcal L_k(V_n)$ is finite dimensional for all $n\in\NN$.
\end{prop}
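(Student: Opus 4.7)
The plan is to realize $\mathcal L_k(V)$ as a $\VIC_\QQ$--submodule of the tensor power $V^{\otimes k}$ and to shuttle between finite generation and uniform representation stability via \autoref{thmA:VICrepstab} and \autoref{thmA:VICnoeth}. The free Lie algebra is a functor from $\xmod\QQ$ to Lie algebras, and the Poincar\'e--Birkhoff--Witt embedding into the tensor algebra identifies $\mathcal L_k(W)$ with a natural subspace of $W^{\otimes k}$ for every vector space $W$. Since $V^{\otimes k}$ with the diagonal action is a rational $\VIC_\QQ$--module and the embedding $\mathcal L_k(W)\hookrightarrow W^{\otimes k}$ is natural in $W$, the spaces $\mathcal L_k(V_n)$ assemble into a $\VIC_\QQ$--submodule of $V^{\otimes k}$. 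Each $\mathcal L_k(V_n)$ is finite dimensional and rational because it is a subrepresentation of $V_n^{\otimes k}$.

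The main step is to show that $V^{\otimes k}$ is finitely generated as a $\VIC_\QQ$--module. By \autoref{thmA:VICrepstab} the hypothesis implies that $V$ is finitely generated, say in ranks $\le m$. I claim $V^{\otimes k}$ is then generated in ranks $\le km$. It suffices to treat decomposable tensors $v_1\otimes\dots\otimes v_k\in V_n^{\otimes k}$ in which each $v_i$ lies in the image of a morphism $V_{m_i}\to V_n$ coming from some $(f_i,C_i)\in\Hom_{\VIC_\QQ}(\QQ^{m_i},\QQ^n)$ with $m_i\le m$, since such tensors span $V_n^{\otimes k}$. Setting $W=\sum_i f_i(\QQ^{m_i})\subseteq \QQ^n$ gives a subspace of dimension at most $km$; picking any complement $E$ of $W$ in $\QQ^n$ yields a $\VIC_\QQ$--morphism $(h,E)\colon\QQ^{\dim W}\to\QQ^n$ through which every $(f_i,C_i)$ factors. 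Hence each $v_i$ is the image under $V(h,E)$ of some $v'_i\in V_{\dim W}$, and $v_1\otimes\dots\otimes v_k=(V^{\otimes k})(h,E)(v'_1\otimes\dots\otimes v'_k)$ lies in the image of $(V^{\otimes k})_{\dim W}$, establishing the claim.

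With $V^{\otimes k}$ finitely generated, the noetherian property \autoref{thmA:VICnoeth} shows that the submodule $\mathcal L_k(V)\subseteq V^{\otimes k}$ is also finitely generated. Combined with the finite dimensionality and rationality of each $\mathcal L_k(V_n)$, a second application of \autoref{thmA:VICrepstab} yields that $\mathcal L_k(V)$ is uniformly representation stable, completing the proof.

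The main obstacle is the finite generation of $V^{\otimes k}$; this is handled by the linear-algebraic observation that any finite collection of small-rank subspaces of $\QQ^n$ is contained in a common small-rank subspace, which produces the simultaneous factorization through a single low-rank $\VIC_\QQ$--morphism needed above.
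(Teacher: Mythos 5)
Your route---embed $\mathcal L_k(V)$ into $V^{\otimes k}$, prove $V^{\otimes k}$ is finitely generated, then invoke \autoref{thmA:VICnoeth} and both directions of \autoref{thmA:VICrepstab}---is genuinely different from the paper's proof, which instead resolves $\mathcal L_k(V)$ by the degree-$k$ part of the Chevalley--Eilenberg complex of the free Lie algebra and applies the stable formulas for tensor and exterior powers (\autoref{thm:innerGL}, \autoref{prop:wedgeGL}) term by term. However, your key lemma contains a genuine gap: the asserted simultaneous factorization is false as stated. A morphism of $\VIC_\QQ$ is a pair (injection, complement), and $(f_i,C_i)$ factors through $(h,E)$ only if, besides $\im f_i\subseteq \im h$, one has $E\subseteq C_i$ (recall $(h,E)\circ(g_i,D_i)=(h\circ g_i,\,E\oplus h(D_i))$). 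For an arbitrary complement $E$ of $W=\sum_i \im f_i$ there is no reason for $E\subseteq C_i$, and it can happen that no complement of $W$ works at all. Concretely, in $\QQ^3$ with $k=2$, $m=1$, take $\im f_1=\langle e_1\rangle$, $C_1=\langle e_1+e_2,\,e_3\rangle$ and $\im f_2=\langle e_2\rangle$, $C_2=\langle e_1+e_2,\,e_1+e_3\rangle$. Then $C_1\cap C_2=\langle e_1+e_2\rangle\subseteq W=\langle e_1,e_2\rangle$, so no complement of $W$ lies in both $C_1$ and $C_2$, and these two morphisms do not factor through any $\VIC_\QQ$--morphism of rank $\le 2=km$. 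Hence the step ``each $v_i$ is the image under $V(h,E)$ of some $v_i'$'' does not follow, and the bound $km$ is not attainable by this method.

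The lemma is repairable, and with the repair the rest of your argument goes through. Choose the complement first: set $Z=\bigcap_i C_i$, which has codimension at most $km$ in $\QQ^n$, and pick $E\subseteq Z$ with $E\cap W=0$ and $\dim E=\dim Z-\dim(Z\cap W)\ge n-2km$; then choose $H\supseteq W$ with $H\oplus E=\QQ^n$, so $\dim H\le 2km$. A dimension count shows $C_i=E\oplus(C_i\cap H)$ and that $h^{-1}(C_i\cap H)$ is a complement of $h^{-1}(\im f_i)$, so every $(f_i,C_i)$ factors through $(h,E)$ with $h\colon \QQ^{\dim H}\to H$ an isomorphism. Thus $V^{\otimes k}$ is generated in ranks $\le 2km$, and since each $V_n^{\otimes k}$ is finite dimensional this gives finite generation; \autoref{thmA:VICnoeth} then makes the submodule $\mathcal L_k(V)$ finitely generated, and \autoref{thmA:VICrepstab} yields uniform representation stability, as you intended. (Your uses of naturality of $\mathcal L_k(W)\subseteq W^{\otimes k}$, of rationality and finite dimensionality of subrepresentations, and of the converse direction of \autoref{thmA:VICrepstab} to get finite generation of $V$ from the hypotheses are all fine.)
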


\begin{proof}
Clearly $\mathcal L_k$ is a functor thus $\mathcal L_k(V)$ is certainly a $\VIC_\QQ$--module. For the other assertions we adopt the methods used in the proof of \cite[Thm 5.3]{CF}. 

Because the Chevalley--Eilenberg homology of a free Lie algebra $\mathcal L(V)$ is given by
\[ H_i(\mathcal L(V)) = \begin{cases} \QQ & i=0,\\ \mathcal L_1(V) = V & i=1,\\ 0 &i >1,\end{cases}\]
for every $k\ge2$ there is an exact sequence
\[ 0 \longrightarrow \left({\bigwedge}^k\mathcal  L(V)\right)_k \longrightarrow\left({\bigwedge}^{k-1}\mathcal  L(V)\right)_k  \longrightarrow \cdots \longrightarrow \left({\bigwedge}^2\mathcal  L(V)\right)_k \longrightarrow \mathcal L_k(V) \longrightarrow 0,\]
where $\left({\bigwedge}^i\mathcal  L(V)\right)_k$ is the $k$th degree part of ${\bigwedge}^i \mathcal L(V)$, which is given by all direct summands
\[ {\bigwedge}^{i_1}\mathcal L_{k_1}(V) \otimes \cdots \otimes {\bigwedge}^{i_r}\mathcal L_{k_r}(V) \]
with $k_1 < \dots <k_r$ and $\sum i_j\cdot k_j = k$ and $\sum i_j = i$.

We can use \autoref{thm:innerGL} and \autoref{prop:wedgeGL} to deduce by induction that every term except the last of the exact sequence are sequences of finite dimensional rational $\GL_n\QQ$--representations that are uniformly representation stable. This implies that the last term $\mathcal L_k(V)$ is also a sequence of finite dimensional rational $\GL_n\QQ$--representations that is uniformly representation stable.
\end{proof}

\begin{proof}[Proof of \autoref{thmA}]\hypertarget{proof:thmA}
$\gr(\gamma\IA_n)$
is generated in the first degree
\[ \gr_1(\gamma\IA_n) \cong H_1(\IA_n;\QQ).\]
Thus there is a graded epimorphism
\[ \mathcal L(H_1(\IA_n;\QQ)) \surject \gr(\gamma\IA_n)\]
from the free Lie algebra on $H_1(\IA_n;\QQ)$.

The epimorphism 
\[ \mathcal L_i(H_1(\IA_n;\QQ)) \surject \gr_i(\gamma\IA_n)\]
is a $\GL_n\ZZ$--equivariant map because it is induced by the (group) commutator, which commutes with group homomorphisms.
Then because restrictions of irreducible rational $\GL_n\QQ$--representations to $\GL_n\ZZ$ are irreducible (see \autoref{sec:res}), the quotient $\gr_i(\gamma\IA_{n})$ is also a finite dimensional rational $\GL_n\QQ$--representation.
\end{proof}

We were not able to construct a $\VIC_\QQ$--module structure on $\{\gr_i(\gamma\IA_n)\}_{n\in\NN}$. We can however find a $\VIC_\QQ$--module $V$ for every $i\in \NN$ such that $V_n \cong \gr_i(\gamma\IA_n)$ for all large enough $n\in \NN$. To do so we appeal to \autoref{prop:VICmod}.

\begin{thm}\label{thm:VICQ beta}
Fix $i\in \NN$. There is a rational $\VIC_\QQ$--module $V$ such that $V_n$ and $\gr_i(\gamma\IA_n)$ are isomorphic $\GL_n\QQ$--representations for all large enough $n\in\NN$.
\end{thm}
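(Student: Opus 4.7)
The plan is to exhibit $V$ as a quotient of the rational $\VIC_\QQ$--module $W$ with $W_n = \mathcal L_i(H_1(\IA_n;\QQ))$. By the preceding proposition, $W$ is itself a rational $\VIC_\QQ$--module, since $\{H_1(\IA_n;\QQ)\}$ already is. The group commutator in $\IA_n$ yields the canonical $\GL_n\ZZ$--equivariant surjection $\pi_n\colon W_n \twoheadrightarrow \gr_i(\gamma\IA_n)$ out of the free Lie algebra, and by naturality of commutators under the group homomorphisms $\IA_m \to \IA_n$ arising from $\VIC_\ZZ$--morphisms, the family $\pi = \{\pi_n\}$ is a morphism of $\VIC_\ZZ$--modules. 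Let $K = \{\ker \pi_n\}$; this is a $\VIC_\ZZ$--submodule of $W$.

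The main obstacle is to promote $K$ to a $\VIC_\QQ$--submodule, which amounts to showing that each $\ker \pi_n \subseteq W_n$ is $\GL_n\QQ$--invariant, not merely $\GL_n\ZZ$--invariant. The key observation is a weight-sum argument: a scalar $c\cdot\id \in \GL_n\QQ$ acts on $\bigwedge^2 \QQ^n$ as $c^2$ and on $(\QQ^n)^*$ as $c^{-1}$, hence as $c$ on $H_1(\IA_n;\QQ) \cong \bigwedge^2 \QQ^n \otimes (\QQ^n)^*$, and as $c^i$ on the subquotient $W_n$ of $H_1(\IA_n;\QQ)^{\otimes i}$. Consequently every $\GL_n\QQ$--irreducible constituent $\GL_n(\lambda^+,\lambda^-)$ appearing in $W_n$ satisfies $|\lambda^+|-|\lambda^-|=i$. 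From the analysis in \autoref{sec:res} we know that two rational $\GL_n\QQ$--irreducibles have isomorphic $\GL_n\ZZ$--restrictions only if they differ by a twist $D_{2k}$ with $k\in\ZZ$, which shifts the weight sum by $2kn$; forcing both sums equal to $i$ gives $k=0$. So distinct $\GL_n\QQ$--constituents of $W_n$ restrict to pairwise non-isomorphic $\GL_n\ZZ$--irreducibles. This means the $\GL_n\ZZ$-- and the $\GL_n\QQ$--isotypic decompositions of $W_n$ coincide, so every $\GL_n\ZZ$--invariant subspace of $W_n$ is automatically $\GL_n\QQ$--invariant; in particular $\ker \pi_n$ is.

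Once the $K_n$ are known to be $\GL_n\QQ$--invariant, \autoref{prop:VICmod} upgrades $K$ to a $\VIC_\QQ$--submodule of $W$ (the required triviality of $\GL_{n-m}\QQ$ on the image inherits from $W$). Set $V := W/K$; this is a rational $\VIC_\QQ$--module with $V_n \cong \gr_i(\gamma\IA_n)$ as $\GL_n\ZZ$--representations. Since the rational $\GL_n\QQ$--structure on $\gr_i(\gamma\IA_n)$ constructed in the proof of \autoref{thmA} arises by precisely the same recipe (quotient of $\mathcal L_i(H_1(\IA_n;\QQ))$ by the very same kernel, which is $\GL_n\QQ$--invariant by the same weight-sum argument), the isomorphism is also $\GL_n\QQ$--equivariant for all $n$ large enough that the decomposition of $H_1(\IA_n;\QQ)$ has stabilized, finishing the proof.
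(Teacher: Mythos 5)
Your proposal is correct in substance, but it takes a genuinely different route from the paper, and the difference is worth recording. The paper does not work with the kernel at all: it uses uniform representation stability of $\mathcal L_i(H_1(\IA_n;\QQ))$ (via \autoref{thm:innerGL} and \autoref{prop:wedgeGL}) to bound the lengths of all constituents by $N^+,N^-$, observes that for $n\ge N=N^++N^-+1$ the analysis of \autoref{sec:res} forces distinct constituents to have nonisomorphic $\GL_n\ZZ$--restrictions, extends $\gr_i(\gamma\IA_n)$ uniquely for such $n$, sets $V_n=0$ for $n<N$, and then assembles the $\VIC_\QQ$--structure by hand through \autoref{prop:VICmod}, checking equivariance of the $\phi_n$ with \autoref{cor:branchingGL}. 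You instead rule out the determinant-twist ambiguity by the central character: every constituent of $\mathcal L_i(H_1(\IA_n;\QQ))\subseteq H_1(\IA_n;\QQ)^{\otimes i}$ has $|\lambda^+|-|\lambda^-|=i$, while twisting by $D_{2k}$ shifts this by $2kn$, so $k=0$; this works for every $n$, lets you show $\ker\pi_n$ is $\GL_n\QQ$--invariant without any truncation or stability input, and realizes $V$ directly as the quotient $\VIC_\QQ$--module $W/K$. What your route buys is uniformity in $n$ and independence from the representation-stability bookkeeping ($N^+,N^-$); what the paper's route buys is that it only needs the qualitative statements of \autoref{sec:res} together with length bounds, at the cost of the ``large enough $n$'' truncation.

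One step you should make explicit: from ``distinct $\GL_n\QQ$--constituents restrict to pairwise nonisomorphic $\GL_n\ZZ$--irreducibles'' you conclude that every $\GL_n\ZZ$--invariant subspace of $W_n$ is $\GL_n\QQ$--invariant. Inside an isotypic component $V_j^{\oplus m_j}$ with $m_j\ge 2$ this also requires $\End_{\GL_n\ZZ}(\Res V_j)=\QQ$; if the restriction had a larger (division-algebra) endomorphism ring there would be skew diagonal $\GL_n\ZZ$--submodules that are not $\GL_n\QQ$--submodules. Here this does hold, because $\SL_n\ZZ$ is Zariski dense in $\SL_n\QQ$ and each constituent is absolutely irreducible over $\SL_n\QQ$, so any $\SL_n\ZZ$--equivariant endomorphism is scalar --- the same Borel-density input already used in \autoref{sec:res}. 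This is a small elision (the paper's own proof makes a comparable implicit use when it asserts uniqueness of the extension and automatic $\GL_n\QQ$--equivariance of $\phi_n$), not a gap in the strategy.
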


\begin{proof}
We have already observed that
\[ \mathcal L_i(H_1(\IA_?;\QQ)) \surject \gr_i(\gamma \IA_?)\]
is an epimorphism of $\VIC_\ZZ$--modules. Also $\mathcal L_i(H_1(\IA_?;\QQ))$ is a uniformly representation stable $\VIC_\QQ$--module. Let $N^+$ be the maximum of all values $\ell(\lambda^+)$ such that $\GL_n(\lambda^+,\lambda^-)$ (for some $\lambda^-$) appears as a constituent in $\mathcal L_i(H_1(\IA_n;\QQ))$ for some $n\in\NN$. Similarly let $N^-$ be the maximum of all $\lambda^-$ for which a $\GL_n(\lambda^+,\lambda^-)$ is a constituent in $\mathcal L_i(H_1(\IA_n;\QQ))$ for some $n\in\NN$. Then for all $n\ge N=N^++N^-+1$ by the analysis of \autoref{sec:res} two nonisomorphic irreducible constituent of the $\GL_n\QQ$--representation $\mathcal L_i(H_1(\IA_n;\QQ))$ cannot restrict to isomorphic $\GL_n\ZZ$--representations. This means for $n\ge N$, there is a unique way to extend $\gr_i(\gamma\IA_n)$ to a rational $\GL_n\QQ$--representation such that
\[ \mathcal L_i(H_1(\IA_n;\QQ)) \surject \gr_i(\gamma \IA_n)\]
is $\GL_n\QQ$--equivariant.

Let
\[ V_n = \begin{cases} 0 &n < N\\  \gr_i(\gamma \IA_n) &n\ge N\end{cases}\]
be a sequence of $\GL_n\QQ$--representations and let
\[ \phi_n \colon V_n \longrightarrow V_{n+1}\]
be the image of the standard embedding $\ZZ^n\to \ZZ^{n+1}$ if $n\ge N$ and zero otherwise. $V_{n+1}$ only has irreducible constituents $\GL_{n+1}(\lambda^+,\lambda^-)$ with $\ell(\lambda^+)\le N^+$ and $\ell(\lambda^-)\le  N^-$ and by \autoref{cor:branchingGL}  the restriction $\Res^{\GL_{n+1}\QQ}_{\GL_n\QQ} V_{n+1}$ has therefore also only constituents $\GL_n(\mu^+,\mu^-)$ with $\ell(\mu^+)\le N^+$ and $\ell(\mu^-) \le N^-$. Therefore $\phi_n$ is $\GL_n\QQ$--equivariant.

Using \autoref{prop:VICmod}, it remains to show that $\GL_{n-m}\QQ$ acts trivially on the image of $V_m$ in $V_n$. This property is transferred from the $\VIC_\QQ$--module $\mathcal L_i(H_1(\IA_?;\QQ))$ by the following argument. We already know that the image of $\mathcal L_i(H_1(\IA_m;\QQ))$ is inside
\[ \mathcal L_i(H_1(\IA_n;\QQ))^{\GL_{n-m}\QQ}\]
which maps to
\[ V_n^{\GL_{n-m}\QQ}.\]
Because
\[ \mathcal L_i(H_1(\IA_m;\QQ)) \surject V_m\]
is surjective, $V_m$, too, must map to $ V_n^{\GL_{n-m}\QQ}$.
%
%
%
%
\end{proof}

\begin{proof}[Proof of \autoref{thmB}]
Let $V$ be the $\VIC_\QQ$--module from \autoref{thm:VICQ beta} such that $V_n = \gr_i(\gamma\IA_{n})$ for all large enough $n\in\NN$. Then by its description as a $\VIC_\ZZ$--module it restricts to the truncation of the  $\FI$--module described in \cite[Ex 7.3.6]{CEF}.

We will use the result \cite[Thm 7.3.8]{CEF} that $V$ is (the submodule of) a finitely generated $\FI$--module and thereby a finitely generated $\VIC_\QQ$--module. (Djament \cite[Prop 7.2]{Dj} proves that $\gr_i(\gamma \IA_?)$ is a $\VIC_\ZZ$--module generated in finite rank.)


Our \autoref{thmA:VICrepstab} implies then that $V$ is uniformly representation stable.
%
\end{proof}

\subsection{Johnson filtration of $\IA_n$}

Let us consider the Andreadakis filtration $\alpha\Aut(F_n)$ (see \autoref{Def:And}) of the automorphism group $\Aut(F_n)$. This is an $N$--series of
\[ \alpha_1\Aut(F_n) = \ker(\Aut(F_n) \to \Aut(F_n/\gamma_2F_n)) = \IA_n\]
and is often called the \emph{Johnson filtration} $\alpha\IA_n$ of $\IA_n$ because of Johnson's work on the Torelli subgroups of the mapping class groups of surfaces that started out with \cite{Jo80}.

\begin{prop}\label{prop:VICZ alpha}
$\{\gr_i(\alpha\IA_n)\}_{n\in\NN}$ gives rise to a $\VIC_\ZZ$--module.
\end{prop}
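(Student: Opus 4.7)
The plan is to mimic \autoref{prop:VICZ beta} step-by-step, replacing the lower central series $\gamma\IA_n$ with the Johnson filtration $\alpha\IA_n$. The key inputs needed are (i) that $\alpha_i\IA_n$ is characteristic in $\Aut(F_n)$, (ii) that the Johnson filtration is an $N$--series of $\IA_n$, and (iii) that $\Aut(F_{n-m})$ commutes with $\Aut(F_m)$ under the standard block embedding into $\Aut(F_n)$.

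First I would verify that $\alpha_i\IA_n$ is a characteristic subgroup of $\Aut(F_n)$: it is the kernel of the map $\Aut(F_n)\to \Aut(F_n/\gamma_{i+1}F_n)$, and $\gamma_{i+1}F_n$ is characteristic in $F_n$, so every element of $\Aut(F_n)$ preserves $\alpha_i\IA_n$ by conjugation. Consequently, for $m\le n$, the inclusion $\IA_m\hookrightarrow \IA_n$ (coming from the standard inclusion $F_m\hookrightarrow F_n$) restricts to a homomorphism $\alpha_i\IA_m \to \alpha_i\IA_n$, which descends to $\gr_i(\alpha\IA_m)\to\gr_i(\alpha\IA_n)$. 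Moreover, for any $g\in\Aut(F_n)$ and $h\in\Aut(F_m)$, the composition $\gr_i(\alpha\IA_l) \xrightarrow{h} \gr_i(\alpha\IA_m)\xrightarrow{g} \gr_i(\alpha\IA_n)$ is realized by $gh\in\Aut(F_n)$, giving functoriality in $\Aut(F_n)$.

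Next I would trivialize the two subgroups that need to act trivially. By Andreadakis's theorem, $\alpha$ is an $N$--series of $\IA_n$, so \autoref{rem:N-series} immediately yields that $\IA_n$ acts trivially on $\gr_i(\alpha\IA_n)$: for $n\in\alpha_i\IA_n$ and $g\in \IA_n=\alpha_1\IA_n$, the commutator $[g,n]$ lies in $\alpha_{i+1}\IA_n$. For the complementary factor, $\Aut(F_{n-m})\subset \Aut(F_n)$ (the subgroup fixing the first $m$ generators and acting on the last $n-m$) commutes element-wise with $\Aut(F_m)$, hence with every subquotient of $\Aut(F_m)$, and in particular acts trivially on $\gr_i(\alpha\IA_m)\subset \gr_i(\alpha\IA_n)$.

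Therefore the action of $\Aut(F_n)$ on $\gr_i(\alpha\IA_m)\to\gr_i(\alpha\IA_n)$ factors through
\[\quot{\Aut(F_n)}{\IA_n\cdot \Aut(F_{n-m})} \cong \quot{\GL_n\ZZ}{\GL_{n-m}\ZZ},\]
where the isomorphism uses $\IA_n\cap \Aut(F_{n-m})=\IA_{n-m}$ exactly as in \autoref{prop:VICZ beta}. By \autoref{rem:VIC(m,n)} this is precisely the morphism set $\Hom_{\VIC_\ZZ}(\ZZ^m,\ZZ^n)$, and composition corresponds to group multiplication in $\GL_n\ZZ$, so we obtain a well-defined functor $V\colon \VIC_\ZZ\to\xmod\QQ$ with $V_n=\gr_i(\alpha\IA_n)$. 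The only step requiring genuine input beyond the template of \autoref{prop:VICZ beta} is that $\alpha$ is an $N$--series, which is Andreadakis's theorem cited just after \autoref{Def:And}; everything else is formal.
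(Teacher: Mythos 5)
Your proposal follows essentially the same route as the paper: it transplants the argument of \autoref{prop:VICZ beta} to the Johnson filtration, with the only genuinely new ingredient being Andreadakis's theorem that $[\IA_n,\alpha_i\IA_n]\subset\alpha_{i+1}\IA_n$ (so that $\IA_n$ acts trivially on $\gr_i(\alpha\IA_n)$), the triviality of the $\Aut(F_{n-m})$--action and the identification of $\quot{\Aut(F_n)}{\IA_n\cdot\Aut(F_{n-m})}$ with $\Hom_{\VIC_\ZZ}(\ZZ^m,\ZZ^n)$ being exactly as before. The only cosmetic point is that you call $\alpha_i\IA_n$ \emph{characteristic} in $\Aut(F_n)$ when all that is needed (and all your kernel argument shows) is that it is normal in $\Aut(F_n)$; this does not affect the proof.
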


\begin{proof}
We follow the same strategy as in the proof of \autoref{prop:VICZ beta}. $\Aut(F_n)$ acts on its normal subgroup $\alpha_i\IA_n$ by conjugation. For $m\le n$, this action induces group homomorphisms
\[ \alpha_i\IA_m \longrightarrow \alpha_i\IA_n\]
and
\[ \gr_i(\alpha\IA_m) \longrightarrow \gr_i(\alpha\IA_n).\]

It is clear that $\Aut(F_{n-m})\subset \Aut(F_n)$ acts trivially on
\[ \gr_i(\alpha\IA_m) \subset \gr_i(\alpha\IA_n).\]
Further 
\[ [\IA_n,\alpha_i\IA_n]\subset \alpha_{i+1}\IA_n\]
follows from \cite[Thm 1.1(ii)]{An}. Thus also $\IA_n\subset \Aut(F_n)$ acts trivially on
\[ \gr_i(\alpha\IA_m) \subset \gr_i(\alpha\IA_n).\]

By same argument as in the proof of \autoref{prop:VICZ beta}, this construction gives a functor $V\colon \VIC_\ZZ \to \xmod\QQ$ with
\[ V_n = \gr_i(\alpha\IA_n).\qedhere\]
\end{proof}

In the case of the Johnson filtration we do not know whether $\gr(\alpha\IA_n)$ is generated in degree one as a Lie algebra. (Although it was conjectured by Andreadakis that $\alpha_i\IA_n = \gamma_i\IA_n$ for all $i,n \in \NN$.) Luckily we have another tool at hand---the Johnson homomorphism. As explained by Satoh  \cite[Sec 3.4]{SatSurvey} there is a $\GL_n\ZZ$--equivariant monomorphism
\[ \gr_i(\alpha\IA_n) \inject \Hom_\QQ(H_1(F_n;\QQ), \gr_{i+1}(\gamma F_n))\cong (\QQ^n)^* \otimes \mathcal L_{i+1}(\QQ^n).\]
By the same arguments used for \autoref{thmA}, we can infer the following proposition that also has been stated in the introduction of \cite[Sec 4]{SatSurvey} without a proof.

\begin{prop}\label{prop:GLQ alpha}
The natural $\GL_n\ZZ$--representation on $\gr_i(\alpha\IA_n)$ extends to a rational $\GL_n\QQ$--representation.
\end{prop}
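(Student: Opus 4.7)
The plan is to mirror the argument used to prove \autoref{thmA}, with the Johnson homomorphism playing the role of the $\GL_n\ZZ$--equivariant surjection from the free Lie algebra on $H_1(\IA_n;\QQ)$. Whereas \autoref{thmA} transports a rational $\GL_n\QQ$--structure across a $\GL_n\ZZ$--equivariant surjection \emph{out of} a rational module, here the plan is to transport it across a $\GL_n\ZZ$--equivariant injection \emph{into} one.

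First I would invoke Satoh's construction \cite[Sec 3.4]{SatSurvey} of the $\GL_n\ZZ$--equivariant embedding
\[ \gr_i(\alpha\IA_n) \inject (\QQ^n)^* \otimes \mathcal L_{i+1}(\QQ^n). \]
The target is a finite-dimensional rational $\GL_n\QQ$--representation, since $\mathcal L_{i+1}(\QQ^n)$ is a subquotient of $(\QQ^n)^{\otimes(i+1)}$ and $(\QQ^n)^*$ is rational.

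The central step is to argue that the image of the Johnson homomorphism, a priori only a $\GL_n\ZZ$--invariant subspace, is in fact a $\GL_n\QQ$--subrepresentation of the target. By Borel's theorem (\autoref{thm:Borel}, invoked as in \autoref{sec:res}), restrictions of irreducible rational $\GL_n\QQ$--representations to $\GL_n\ZZ$ remain irreducible, so a semisimple rational $\GL_n\QQ$--module decomposes as a $\GL_n\ZZ$--module into the restrictions of its rational constituents. Provided that these restrictions are pairwise non-isomorphic, every $\GL_n\ZZ$--subspace is a direct sum of multiplicity subspaces of rational irreducibles and hence $\GL_n\QQ$--invariant; pulling the induced rational structure back through the Johnson injection endows $\gr_i(\alpha\IA_n)$ with the sought rational $\GL_n\QQ$--structure.

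The main technicality I expect is verifying that distinct rational irreducibles appearing in $(\QQ^n)^*\otimes \mathcal L_{i+1}(\QQ^n)$ do restrict to pairwise non-isomorphic $\GL_n\ZZ$--representations. From the analysis of \autoref{sec:res}, two such restrictions agree only when the underlying partitions $\lambda$ coincide and the determinant twists $D_k, D_{k'}$ differ by an even integer. An application of \autoref{thm:innerGL} to $\GL_n(\emptyset,(1))\otimes\GL_n(\mu,\emptyset)$ shows that every constituent of the target has $\lambda^-\in\{\emptyset,(1)\}$, so the two families fall into the parities $k=0$ and $k=-1$ of the determinant twist and the required non-isomorphism follows. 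Once this bookkeeping is in place, the rational $\GL_n\QQ$--structure on the image, and hence on $\gr_i(\alpha\IA_n)$, is automatic.
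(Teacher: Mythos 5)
Your strategy is in essence the paper's: the paper proves this proposition by citing Satoh's $\GL_n\ZZ$--equivariant embedding $\gr_i(\alpha\IA_n)\inject(\QQ^n)^*\otimes\mathcal L_{i+1}(\QQ^n)$ and then saying ``by the same arguments used for \autoref{thmA}''. Where you diverge is the final step. The paper does not need the image of the Johnson homomorphism to be a $\GL_n\QQ$--subrepresentation: since every irreducible constituent of the target restricts irreducibly to $\GL_n\ZZ$ (\autoref{thm:Borel}, \autoref{sec:res}), the target is semisimple as a $\GL_n\ZZ$--module with simple summands that are restrictions of rational irreducibles, so any $\GL_n\ZZ$--submodule is abstractly isomorphic to a direct sum of such restrictions --- and that is exactly what ``extends to a rational representation'' asserts (in \autoref{thmA} the identical remark is applied to a quotient rather than a submodule). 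No pairwise non-isomorphism is required. Your stronger claim (that the image is actually $\GL_n\QQ$--stable) is true, but your argument for it has two soft spots. First, ``every $\GL_n\ZZ$--subspace is a sum of multiplicity subspaces'' needs, besides pairwise non-isomorphic restrictions, that $\End_{\GL_n\ZZ}$ of each restricted constituent is $\QQ$; over a non-algebraically-closed field irreducibility alone does not give this. It does hold here, e.g.\ because $\SL_n\ZZ$ is Zariski dense in $\SL_n\QQ$ and $\SL_n(\lambda)$ is absolutely irreducible, but it should be said. Second, the assertion that the two families of constituents ($\lambda^-=\emptyset$ and $\lambda^-=(1)$) carry determinant twists of parities $0$ and $-1$ is only valid once $n$ is large relative to $i$: when $\ell(\lambda^+)=n$, or after the modification rules intervene at small $n$, the twist of a $\lambda^-=\emptyset$ constituent can be a positive (possibly odd) integer, whereas the proposition is stated for every $n$. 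A cleaner route to pairwise non-isomorphism valid for all $n$ is to note that all constituents of $(\QQ^n)^*\otimes(\QQ^n)^{\otimes(i+1)}$ share the central character $t\mapsto t^{i}$, i.e.\ $|\lambda|+nk=i$ in the notation $\GL_n(\lambda)\otimes D_k$ of \autoref{sec:res}, so equal $\SL_n$--type together with equal parity of the twist already forces equality of the constituents. In short: your proof is salvageable and proves somewhat more than needed, but the extra step is exactly where the delicate bookkeeping lives; the paper's semisimplicity argument sidesteps it entirely.
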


As for the lower central series we can combine \autoref{prop:VICZ alpha} and \autoref{prop:GLQ alpha} to get the following theorem.

\begin{thm}\label{thm:alphaVICQ}
Fix $i\in \NN$. There is a rational $\VIC_\QQ$--module $V$ such that $V_n$ and $\gr_i(\alpha\IA_n)$ are isomorphic $\GL_n\QQ$--representations for all large enough $n\in\NN$.
\end{thm}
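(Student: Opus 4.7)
The plan is to mirror the proof of \autoref{thm:VICQ beta}, replacing the surjection from the free Lie algebra by the Johnson homomorphism---which is an injection into a known $\VIC_\QQ$-module rather than a surjection from one. The free Lie algebra was used there to transfer both the $\GL_n\QQ$-equivariance of the bonding maps and the triviality of the $\GL_{n-m}\QQ$-action on images; both ingredients can instead be pulled back through the Johnson injection.

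First, I would verify that $U_n := (\QQ^n)^*\otimes \mathcal L_{i+1}(\QQ^n)$ carries the structure of a rational $\VIC_\QQ$-module which is uniformly representation stable. The dual $(\QQ^?)^*$ becomes a $\VIC_\QQ$-module via the canonical section coming from the complement data of a $\VIC$-morphism, while $\mathcal L_{i+1}(\QQ^?)$ is a $\VIC_\QQ$-module by functoriality of the free Lie algebra. Since $(\QQ^?)^*$ is irreducible and $\mathcal L_{i+1}(\QQ^?)$ is uniformly representation stable by the proposition established in \autoref{sec:betaGL}, \autoref{thm:innerGL} implies $U$ is uniformly representation stable.

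Second, the Johnson homomorphism of Satoh \cite[Sec 3.4]{SatSurvey} gives a $\GL_n\ZZ$-equivariant monomorphism $J_n\colon \gr_i(\alpha\IA_n) \inject U_n$, and its naturality makes $J$ into a monomorphism of $\VIC_\ZZ$-modules when the domain is given the structure of \autoref{prop:VICZ alpha}. Let $N^+$ and $N^-$ respectively denote the maxima of $\ell(\lambda^+)$ and $\ell(\lambda^-)$ over all irreducible constituents $\GL_n(\lambda^+,\lambda^-)$ of any $U_n$, both finite by stability of $U$. For $n \ge N := N^+ + N^- + 1$, the analysis in \autoref{sec:res} shows that two nonisomorphic irreducible rational $\GL_n\QQ$-constituents of $U_n$ cannot restrict to isomorphic $\GL_n\ZZ$-representations. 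Consequently there is a unique rational $\GL_n\QQ$-structure on $\gr_i(\alpha\IA_n)$---the one pulled back from the corresponding constituents of $U_n$ along $J_n$---extending the $\GL_n\ZZ$-structure so that $J_n$ becomes $\GL_n\QQ$-equivariant.

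Third, I would set
\[ V_n = \begin{cases} 0 & n < N, \\ \gr_i(\alpha \IA_n) & n \ge N, \end{cases} \]
endowed with the above $\GL_n\QQ$-structure, and let $\phi_n$ be the image of the standard embedding from \autoref{prop:VICZ alpha} for $n \ge N$ and zero otherwise. Every irreducible constituent of $V_{n+1}$ satisfies $\ell(\lambda^\pm) \le N^\pm$ since $V_{n+1}$ embeds into $U_{n+1}$, so by \autoref{cor:branchingGL} the same bound holds for constituents of $\Res^{\GL_{n+1}\QQ}_{\GL_n\QQ} V_{n+1}$; the uniqueness established above then forces $\phi_n$ to be $\GL_n\QQ$-equivariant. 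To invoke \autoref{prop:VICmod} it remains to check that $\GL_{n-m}\QQ$ acts trivially on the image of $V_m$ in $V_n$. From the commutative square
\[ \xymatrix{ V_m \ar[r]^-{J_m}\ar[d] & U_m \ar[d] \\ V_n \ar[r]^-{J_n} & U_n } \]
the image of $V_m$ in $V_n$ is sent by $J_n$ into $U_n^{\GL_{n-m}\QQ}$ (since $U$ has this property as a $\VIC_\QQ$-module), and because $J_n$ is $\GL_n\QQ$-equivariant and injective this image must already lie in $V_n^{\GL_{n-m}\QQ}$. The main subtlety is precisely this last point: unlike \autoref{thm:VICQ beta}, where triviality was transferred across a surjection \emph{onto} $\gr_i(\gamma\IA_n)$, here we transfer it backwards across an injection \emph{from} $\gr_i(\alpha\IA_n)$, and injectivity of the Johnson homomorphism as a map of rational $\GL_n\QQ$-representations is exactly what makes this possible.
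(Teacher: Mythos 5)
Your proposal is correct and follows essentially the route the paper intends: the paper proves this theorem by combining \autoref{prop:VICZ alpha} and \autoref{prop:GLQ alpha} "as for the lower central series," i.e.\ by rerunning the proof of \autoref{thm:VICQ beta} with the Johnson homomorphism into $(\QQ^n)^*\otimes\mathcal L_{i+1}(\QQ^n)$ playing the role of the free Lie algebra surjection, which is exactly what you do. Your explicit treatment of the reversed arrow (pulling the $\GL_n\QQ$--equivariance and the trivial $\GL_{n-m}\QQ$--action back through the injection $J_n$) is the right adaptation and fills in details the paper leaves implicit.
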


\begin{proof}[Proof of \autoref{thmD}]
Let $V$ be the $\VIC_\QQ$--module from \autoref{thm:alphaVICQ} such that $V_n = \gr_k(\alpha\IA_{n})$ for all large enough $n\in\NN$.

Church and Putman \cite{CP}  consider the groups $\IA_n$ as an $\FI$--group. They apply their \cite[Thm G]{CP} to prove their \cite[Thm C]{CP}. In the proof of the former theorem in \cite[Claim 2]{CP} it is stated that $W(k)$ is boundedly generated. But $W(k)_n\otimes_\ZZ \QQ$ is $\gr_k(\alpha\IA_n) = V_n$ for all large enough $n\in\NN$. Because every $V_n$ is finite dimensional (see \cite[Prop 3.2]{CP}), $V$ is a finitely generated $\FI$--module and thus certainly a finitely generated $\VIC_\QQ$--module.  (Djament \cite[Prop 7.3]{Dj} proves that $\gr_i(\alpha \IA_?)$ is a $\VIC_\ZZ$--module generated in finite rank independently.)


Our \autoref{thmA:VICrepstab} implies then that $V$ is uniformly representation stable. 
\end{proof}


\section{Torelli subgroups of the mapping class groups of surfaces}\label{section:I}

Let $\Sigma_{g,1}$ denote the compact, oriented genus $g$ surface with one boundary component. The mapping class group $\Mod(\Sigma_{g,1})$ is defined as the discrete group $\pi_0 \Homeo^+(\Sigma_{g,1},\partial\Sigma_{g,1})$ of isotopy classes of orientation-preserving homeomorphisms of $\Sigma_{g,1}$ that fix the boundary pointwise. 
The action of $\Mod(\Sigma_{g,1})$ on $H_1(\Sigma_{g,1};\ZZ)\cong \ZZ^{2g}$ is symplectic and the \emph{Torelli subgroup} $\I_{g,1}$ is defined to be the kernel of this action. In fact, there is a short exact sequence
\[ 1 \to \I_{g,1} \to \Mod(\Sigma_{g,1}) \to \Sp(H_1(\Sigma_{g,1};\ZZ)) \cong \Sp_{2g}\ZZ \to 1.\]
Thus we get an $\Sp_{2g}\ZZ$--representation on the abelianzation $H_1(\I_{g,1};\ZZ)$ of $\I_{g,1}$, which after rationalizing can be seen to be a restriction of a $\Sp(H_1(\Sigma_{g,1};\QQ)) \cong \Sp_{2g}\QQ$--representation
\[ H_1(\I_{g,1};\QQ) \cong {\bigwedge}^3 H_1(\Sigma_{g,1};\ZZ) \otimes \QQ ={\bigwedge}^3 H_1(\Sigma_{g,1};\QQ) \]
as it has been computed by Johnson  \cite[Thm 3(a)]{Jo}.
And for every isometry 
\[ H_1(\Sigma_{g,1};\QQ) \longrightarrow H_1(\Sigma_{g',1};\QQ)\]
there is map
\[ H_1(\I_{g,1};\QQ) \cong {\bigwedge}^3 H_1(\Sigma_{g,1};\QQ)  \longrightarrow H_1(\I_{g',1};\QQ) \cong {\bigwedge}^3 H_1(\Sigma_{g',1};\QQ),  \]
which turns $\{H_1(\I_{g,1};\QQ)\}_{g\in \NN}$ into an $\SI$--module.


As it has already been pointed out by in \cite[Sec 6.1]{CF}
\[ H_1(\I_{g,1};\QQ) \cong {\bigwedge}^3 H_1(\Sigma_{g,1};\QQ) \cong \Sp_{2g}\left(\tiny\yng(1,1,1)\right)\oplus \Sp_{2g}\left(\tiny\yng(1)\right)\]
for all $g\ge3$ is uniformly representation stable.

We will consider two N-series of $\I_{g,1}$. Denote the lower central series by $\gamma\I_{g,1}$. To construct the Johnson filtration of $\I_{g,1}$, consider the classical inclusion 
\[ \Mod(\Sigma_{g,1}) \inject \Aut(F_{2g})\]
and define
\[ \alpha_i\I_{g,1} = \Mod(\Sigma_{g,1}) \cap \alpha_i\IA_{2g} = \ker( \Mod(\Sigma_{g,1}) \to \Aut(F_{2g}/\gamma_{i+1}F_{2g})).\]
This construction immediately implies that $\alpha\I_{g,1}$ is an N-series of $\I_{g,1}$ because $\alpha\IA_{2g}$ is an N-series of $\IA_{2g}$.

\subsection{Lower central series of $\I_{g,1}$}

Analogous to \autoref{sec:betaGL} we derive the following results.

\begin{prop}\label{prop:SIZ beta}
$\{\gr_i(\gamma\I_{g,1})\}_{g\in \NN}$ gives rise to an $\SI_\ZZ$--module.
\end{prop}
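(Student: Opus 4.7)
The plan is to mimic the proof of Proposition \ref{prop:VICZ beta} almost line for line, with $\Mod(\Sigma_{g,1})$ playing the role of $\Aut(F_n)$, the Torelli group $\I_{g,1}$ that of $\IA_n$, and the inclusion $\Mod(\Sigma_{g-h,1})\hookrightarrow \Mod(\Sigma_{g,1})$ induced by an embedding of a complementary subsurface playing the role of $\Aut(F_{n-m})\hookrightarrow \Aut(F_n)$. By Remark \ref{rem:VIC(m,n)} a skeleton of $\SI_\ZZ$ has morphism sets $\Sp_{2g}\ZZ/\Sp_{2g-2h}\ZZ$, so constructing the desired functor reduces to producing, for every $h\le g$, a well-defined map $\Sp_{2g}\ZZ/\Sp_{2g-2h}\ZZ\to \Hom_\QQ\bigl(\gr_i(\gamma\I_{h,1}),\gr_i(\gamma\I_{g,1})\bigr)$ which is compatible with group multiplication.

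First I would fix, for each $h\le g$, a standard decomposition of $\Sigma_{g,1}$ as the union of an embedded $\Sigma_{h,1}$ and a complementary $\Sigma_{g-h,1}$. This yields commuting embeddings of both $\Mod(\Sigma_{h,1})$ and $\Mod(\Sigma_{g-h,1})$ into $\Mod(\Sigma_{g,1})$. Since $\gamma_i\I_{g,1}$ is characteristic in $\I_{g,1}$, which is normal in $\Mod(\Sigma_{g,1})$, conjugation by any $\varphi\in\Mod(\Sigma_{g,1})$ preserves $\gamma_i\I_{g,1}$; applied to the image of the inclusion $\I_{h,1}\hookrightarrow\I_{g,1}$ and then passed to the quotient, this produces a homomorphism $\gr_i(\gamma\I_{h,1})\to\gr_i(\gamma\I_{g,1})$.

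Next I would check that this assignment descends to $\Mod(\Sigma_{g,1})/\bigl(\I_{g,1}\cdot\Mod(\Sigma_{g-h,1})\bigr)$. Elements of $\Mod(\Sigma_{g-h,1})$ act trivially because they commute pointwise with all elements of $\Mod(\Sigma_{h,1})$ (disjoint supports), and elements of $\I_{g,1}$ act trivially on $\gr_i(\gamma\I_{g,1})$ by definition of the lower central series. Parallel to Proposition \ref{prop:VICZ beta}, I would then identify
\[\Mod(\Sigma_{g,1})/\bigl(\I_{g,1}\cdot\Mod(\Sigma_{g-h,1})\bigr) \cong \Sp_{2g}\ZZ/\Sp_{2g-2h}\ZZ,\]
using the surjection $\Mod(\Sigma_{g,1})\twoheadrightarrow \Sp_{2g}\ZZ$, the classical fact that $\Mod(\Sigma_{g-h,1})$ surjects onto $\Sp_{2g-2h}\ZZ$, and the obvious equality $\Mod(\Sigma_{g-h,1})\cap \I_{g,1}=\I_{g-h,1}$ arising from the orthogonal symplectic splitting $H_1(\Sigma_{g,1};\ZZ)\cong H_1(\Sigma_{h,1};\ZZ)\oplus H_1(\Sigma_{g-h,1};\ZZ)$.

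The main (minor) obstacle is functoriality: one must verify that composition in $\SI_\ZZ$ corresponds under these identifications to group multiplication of double cosets, so that the construction really defines a functor $V\colon \SI_\ZZ\to\xmod\QQ$ with $V(\ZZ^{2g})=\gr_i(\gamma\I_{g,1})$. As in the $\VIC_\ZZ$ case, this reduces to functoriality of $\Mod\mapsto \Sp$ on first homology together with compatibility of the chosen subsurface inclusions under iterated composition, so no genuinely new difficulty is expected.
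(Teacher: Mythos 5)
Your proposal is correct and follows essentially the same route as the paper: the paper gives no separate argument for this proposition but derives it ``analogous to'' the $\IA_n$ case, i.e.\ precisely by transporting the proof of \autoref{prop:VICZ beta} to $\Mod(\Sigma_{g,1})$, which is what you do, with the boundary connected sum decomposition $\Sigma_{g,1}=\Sigma_{h,1}\natural\Sigma_{g-h,1}$ supplying the commuting subgroups, the surjection $\Mod(\Sigma_{g-h,1})\surject\Sp_{2g-2h}\ZZ$ and the identification $\Mod(\Sigma_{g,1})/\bigl(\I_{g,1}\cdot\Mod(\Sigma_{g-h,1})\bigr)\cong\Sp_{2g}\ZZ/\Sp_{2g-2h}\ZZ$ playing the roles of the corresponding facts for $\Aut(F_n)$. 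No gap; the functoriality check you flag is handled exactly as in the $\VIC_\ZZ$ case via the skeleton description of \autoref{rem:VIC(m,n)}.
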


\begin{prop}
Let $V$ be a rational $\SI_\QQ$--module which is uniformly representation stable and assume $V_n$ is finite dimensional for every $n\in\NN$. Then the $k$th degrees $\mathcal L_k(V)$ of the free Lie algebra generated by $V$ is a rational $\SI_\QQ$--module which is uniformly representation stable and $\mathcal L_k(V_n)$ is finite dimensional for all $n\in\NN$.
\end{prop}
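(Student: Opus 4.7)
The plan is to mimic exactly the argument given for the $\VIC_\QQ$ analogue on the preceding page, substituting the symplectic inputs for the general linear ones.

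First, $\mathcal L_k$ is a polynomial functor from $\QQ$--vector spaces to $\QQ$--vector spaces, so it carries any $\SI_\QQ$--module $V$ to an $\SI_\QQ$--module $\mathcal L_k(V)$ in an obvious way (apply $\mathcal L_k$ levelwise and to each structure morphism). Since each $V_n$ is finite dimensional, so is $\mathcal L_k(V_n)$. Rationality is preserved too: if $\Sp_{2n}\QQ \to \GL(V_n)$ is rational, then the induced action on $\mathcal L_k(V_n)$, being obtained by applying a polynomial construction, is also rational.

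Next, I invoke the Chevalley--Eilenberg resolution of the trivial module for the free Lie algebra $\mathcal L(V_n)$. Since $H_0(\mathcal L(V_n)) = \QQ$, $H_1(\mathcal L(V_n)) = V_n$, and $H_i(\mathcal L(V_n)) = 0$ for $i \ge 2$, taking the degree-$k$ part of the Koszul-type complex $\bigwedge^\bullet \mathcal L(V_n)$ yields, for $k\ge 2$, an exact sequence of $\Sp_{2n}\QQ$--representations
\[ 0 \longrightarrow \big({\bigwedge}^k\mathcal  L(V_n)\big)_k \longrightarrow \big({\bigwedge}^{k-1}\mathcal L(V_n)\big)_k \longrightarrow \cdots \longrightarrow \big({\bigwedge}^2\mathcal  L(V_n)\big)_k \longrightarrow \mathcal L_k(V_n) \longrightarrow 0,\]
where each $\big(\bigwedge^i \mathcal L(V_n)\big)_k$ decomposes as a direct sum of tensor products of wedge powers $\bigwedge^{i_j}\mathcal L_{k_j}(V_n)$ with $k_1<\cdots<k_r$, $\sum i_j = i$ and $\sum i_jk_j = k$. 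In particular, each term other than $\mathcal L_k(V_n)$ involves only $\mathcal L_\ell(V_n)$ with $\ell<k$.

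I then proceed by induction on $k$, the case $k=1$ being the hypothesis on $V$. Assuming the proposition for all $\ell<k$, every factor $\bigwedge^{i_j}\mathcal L_{k_j}(V)$ appearing in the complex is a uniformly representation stable $\SI_\QQ$--module with finite dimensional values; here $\autoref{prop:wedgeSp}$ supplies stability of wedge powers, and $\autoref{thm:innerSp}$ supplies stability of the outer tensor products (the decompositions of tensor products of irreducible symplectic representations stabilize). These stability statements are moreover uniform in $n$. Consequently the kernel $\mathcal L_k(V)$ (or rather, each term other than $\mathcal L_k(V)$) of the exact sequence is a uniformly representation stable $\SI_\QQ$--module. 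Because extensions and quotients of uniformly representation stable consistent sequences (in characteristic zero) are again uniformly representation stable, I can conclude the same for the cokernel of the penultimate map, which is $\mathcal L_k(V)$.

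The main technical subtlety, as in the $\GL$ case, is keeping track that all the stabilities occur uniformly, i.e.\ that the stable ranges for the finitely many irreducible constituents appearing in $\bigwedge^i \mathcal L_\ell(V_n)$ for $\ell < k$ can be merged into a single stable range for $\mathcal L_k(V_n)$. This follows because there are only finitely many summands $\big(\bigwedge^i\mathcal L(V)\big)_k$ and each only involves finitely many simple $\Sp$--types (by \autoref{prop:wedgeSp} and \autoref{thm:innerSp} combined with the inductive bound on $V$'s stable multiplicities), so only finitely many stable ranges need to be harmonized.
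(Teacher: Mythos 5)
Your proposal is correct and follows essentially the same route as the paper, which proves the $\VIC_\QQ$ case via the degree-$k$ part of the Chevalley--Eilenberg complex for the free Lie algebra and an induction using the inner tensor product and wedge-power stability results, and then obtains the $\SI_\QQ$ case by the identical argument with \autoref{thm:innerSp} and \autoref{prop:wedgeSp} in place of their general linear analogues. Your closing remarks on harmonizing finitely many stable ranges and on passing stability to the cokernel just make explicit what the paper leaves implicit in its final sentence.
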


Note that the following result can also be derived from the explicit description of Habegger--Sorger \cite[Thm 1.1]{HS}.

\begin{thm}\label{thm:SIQ beta}
$\{\gr_i(\gamma\I_{g,1})\}_{g\in \NN}$ gives rise to a rational $\SI_\QQ$--module.
\end{thm}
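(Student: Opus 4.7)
The plan is to follow closely the strategy combining the proof of \autoref{thmA} with that of \autoref{thm:VICQ beta}, exploiting the fact highlighted in \autoref{sec:res} that the situation for the symplectic group is much cleaner than for the general linear group: restrictions of distinct irreducible rational $\Sp_{2g}\QQ$--representations to $\Sp_{2g}\ZZ$ remain distinct and irreducible, so the issue of non-unique extensions (which forced the ``for large enough $n$'' in \autoref{thm:VICQ beta}) never arises.

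First I would invoke \autoref{prop:SIZ beta} to obtain the $\SI_\ZZ$--module structure on $\{\gr_i(\gamma\I_{g,1})\}_{g\in\NN}$. Then, since the lower central series is by definition generated in degree one by iterated commutators, the natural surjection
\[ \mathcal L_i\bigl(H_1(\I_{g,1};\QQ)\bigr) \surject \gr_i(\gamma\I_{g,1}) \]
is $\Sp_{2g}\ZZ$--equivariant, and since the commutator commutes with the maps induced by elements of $\Mod(\Sigma_{g,1})$ (in the fashion of the proof of \autoref{prop:SIZ beta}), these surjections assemble into a morphism of $\SI_\ZZ$--modules. By the previous proposition applied to the rational $\SI_\QQ$--module $\{H_1(\I_{g,1};\QQ)\}_{g\in\NN}$, the source is a rational $\SI_\QQ$--module.

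Next, using Borel's density theorem as recorded in \autoref{sec:res}, the restriction functor $\Res^{\Sp_{2g}\QQ}_{\Sp_{2g}\ZZ}$ is fully faithful on the category of rational $\Sp_{2g}\QQ$--representations, so every $\Sp_{2g}\ZZ$--quotient of $\mathcal L_i(H_1(\I_{g,1};\QQ))$ inherits a unique rational $\Sp_{2g}\QQ$--structure. In particular, each $\gr_i(\gamma\I_{g,1})$ becomes a rational $\Sp_{2g}\QQ$--representation, and the connecting maps in the $\SI_\ZZ$--module---being $\Sp_{2g}\ZZ$--equivariant between rational representations---are automatically $\Sp_{2g}\QQ$--equivariant.

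Finally, to upgrade the $\SI_\ZZ$--structure to an $\SI_\QQ$--structure, I would appeal to \autoref{prop:SImod}: what remains to check is that $\Sp_{2g-2m}\QQ$ acts trivially on the image of $\gr_i(\gamma\I_{m,1})$ in $\gr_i(\gamma\I_{g,1})$. But $\Sp_{2g-2m}\ZZ$ acts trivially on this image by the $\SI_\ZZ$--structure, and since the rational action on the image factors through a Zariski-closed subgroup of $\Sp_{2g-2m}\QQ$ containing the Zariski dense subgroup $\Sp_{2g-2m}\ZZ$, the full rational action is trivial. The main obstacle---really the only nontrivial input---is the Borel density argument, which simultaneously provides the rational extension of each $\Sp_{2g}\ZZ$--representation and the equivariance of the structure maps; without it one would be stuck with a merely $\SI_\ZZ$--module structure.
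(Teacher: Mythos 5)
Your proposal is correct and takes essentially the same route as the paper: start from the $\SI_\ZZ$--structure of \autoref{prop:SIZ beta}, surject from the rational $\SI_\QQ$--module $\mathcal L_i(H_1(\I_{?,1};\QQ))$, use Borel density (restrictions of distinct irreducibles stay distinct and irreducible) to transfer the rational $\Sp_{2g}\QQ$--structure and the equivariance to the quotient, and then lift to an $\SI_\QQ$--module as in \autoref{thm:VICQ beta}. The only cosmetic difference is that you verify triviality of the $\Sp_{2g-2m}\QQ$--action on the image of $\gr_i(\gamma\I_{m,1})$ by Zariski density of $\Sp_{2g-2m}\ZZ$, whereas the paper transfers it through the surjection from the $\Sp_{2g-2m}\QQ$--invariants of the free Lie algebra; both work.
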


\begin{proof}
Again we have an epimorphism
\[ \mathcal L_i(H_1(\I_{?,1};\QQ)) \surject \gr_i(\gamma\I_{?,1})\]
of $\SI_\ZZ$--modules. Because $\mathcal L_i(H_1(\I_{?,1};\QQ))$ is also a rational $\SI_\QQ$--module, we get a unique rational $\Sp_{2g}\QQ$--representation structure on $\gr_i(\gamma\I_{g,1})$ that restricts to the given $\Sp_{2g}\ZZ$--representation. Therefore
\[ \mathcal L_i(H_1(\I_{g,1};\QQ)) \surject \gr_i(\gamma\I_{g,1})\]
is $\Sp_{2g}\QQ$--equivariant. We can then as in the proof of \autoref{thm:VICQ beta} lift the $\SI_\ZZ$--module structure to an $\SI_\QQ$--module structure.
\end{proof}

\begin{proof}[Proof of \autoref{thmC}]
Let $V$ be the $\SI_\QQ$--module from \autoref{thm:SIQ beta} such that $V_g = \gr_i(\I_{g,1})$. Then by its description as an $\SI_\ZZ$--module it restricts to the $\FI$--module described in \cite[Ex 7.3.6]{CEF}.

We will use the result \cite[Thm 7.3.7]{CEF} that $V$ is a finitely generated $\FI$--module and thereby a finitely generated $\SI_\QQ$--module. 


Our \autoref{thmA:SIrepstab} implies then that $V$ is uniformly representation stable.
%
\end{proof}

\subsection{Johnson filtration of $\I_{g,1}$}

Next we consider the Johnson filtration $\alpha\I_{g,1}$ of the Torelli subgroups $\I_{g,1}$. 
The proof of \autoref{prop:VICZ alpha} can be used to prove the following analogue.

\begin{prop}\label{prop:SIZ alpha}
$\{\gr_i(\alpha\IA_n)\}_{n\in\NN}$ gives rise to a $\VIC_\ZZ$--module.
\end{prop}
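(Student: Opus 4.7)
The statement in question literally reproduces \autoref{prop:VICZ alpha}, so the plan is to give the same argument, which was already used there, adapted only in name. Concretely, the target is to build a functor $V\colon \VIC_\ZZ \to \xmod\QQ$ with $V_n = \gr_i(\alpha\IA_n)$, and the input data for this functor has to be extracted from the conjugation action of $\Aut(F_n)$ on its characteristic subgroup $\alpha_i\IA_n\trianglelefteq \IA_n \trianglelefteq \Aut(F_n)$.

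First I would note that because $\alpha_i\IA_n$ is preserved by every element of $\Aut(F_n)$, conjugation gives a $\QQ$--linear $\Aut(F_n)$--action on $\gr_i(\alpha\IA_n)$. For $m\le n$, the standard inclusion $\Aut(F_m)\hookrightarrow \Aut(F_n)$ restricts to a group homomorphism $\alpha_i\IA_m\to\alpha_i\IA_n$, which descends to $\gr_i(\alpha\IA_m)\to\gr_i(\alpha\IA_n)$ on associated graded. Combining both, any element of $\Aut(F_n)$ conjugates the image of $\gr_i(\alpha\IA_m)$ inside $\gr_i(\alpha\IA_n)$, and this composition of ``include, then conjugate'' is functorial in $\Aut(F_n)$.

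Next I would identify the kernel of the induced $\Aut(F_n)$--action on this image, and show it contains both $\Aut(F_{n-m})$ and $\IA_n$. The former acts trivially because it commutes elementwise with $\Aut(F_m)$ inside $\Aut(F_n)$ (they act on disjoint generators), so it acts trivially on every subquotient of $\Aut(F_m)$, in particular on $\gr_i(\alpha\IA_m)$. For the latter I invoke Andreadakis's theorem \cite[Thm~1.1(ii)]{An}, which states that $[\IA_n,\alpha_i\IA_n]\le \alpha_{i+1}\IA_n$; this is exactly what is needed to see that $\IA_n$ acts trivially on the graded piece $\gr_i(\alpha\IA_n)$ (and a fortiori on the image of $\gr_i(\alpha\IA_m)$).

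Finally I would observe that $\IA_n\cap \Aut(F_{n-m}) = \IA_{n-m}$, so the conjugation action factors through
\[ \quot{\Aut(F_n)}{\IA_n\cdot \Aut(F_{n-m})} \;\cong\; \quot{\GL_n\ZZ}{\GL_{n-m}\ZZ} \;\cong\; \Hom_{\VIC_\ZZ}(\ZZ^m,\ZZ^n), \]
where the last isomorphism is from \autoref{rem:VIC(m,n)}. Composition in $\VIC_\ZZ$ corresponds to multiplication of representatives in $\Aut(F_n)$, which on $\gr_i(\alpha\IA_?)$ is the same as first including via $\Aut(F_m)\hookrightarrow\Aut(F_n)$ and then conjugating, matching the composition law required of a functor. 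This yields the desired $\VIC_\ZZ$--module $V$ with $V_n=\gr_i(\alpha\IA_n)$. No genuine obstacle appears; the only nontrivial input is the Andreadakis containment $[\IA_n,\alpha_i\IA_n]\le\alpha_{i+1}\IA_n$, which has already been cited elsewhere in the paper.
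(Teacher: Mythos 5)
Your proof is correct and is essentially identical to the paper's proof of \autoref{prop:VICZ alpha}, which is precisely the argument the paper itself invokes at this point, so there is nothing to add on the mathematical side. One remark: the statement as printed is evidently a copy-paste slip --- given its label and its placement in the section on mapping class groups, the intended assertion is that $\{\gr_i(\alpha\I_{g,1})\}_{g\in\NN}$ gives rise to an $\SI_\ZZ$--module, and your argument transfers verbatim to that setting by replacing $\Aut(F_n)$, $\IA_n$, $\GL_n\ZZ$ with $\Mod(\Sigma_{g,1})$, $\I_{g,1}$, $\Sp_{2g}\ZZ$, where the needed containment $[\I_{g,1},\alpha_i\I_{g,1}]\le\alpha_{i+1}\I_{g,1}$ is imported from the Andreadakis result via $\alpha_i\I_{g,1}=\Mod(\Sigma_{g,1})\cap\alpha_i\IA_{2g}$.
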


Similar to the Johnson filtration of $\IA_n$, we also get information from the (original) Johnson homomorphism. As explained by Satoh  \cite[Sec 8]{SatSurvey} there is an $\Sp_{2g}\ZZ$--equivariant monomorphism
\[ \gr_i(\alpha\I_{g,1}) \inject \Hom_\QQ(H_1(\Sigma_{g,1};\QQ), \gr_{i+1}(\gamma F_{2g}))\cong \QQ^{2g} \otimes \mathcal L_{i+1}(\QQ^{2g}).\]
By the same arguments used in the proof of \autoref{thm:SIQ beta}, we can deduce the following result.

\begin{thm}\label{thm:SIQ alpha}
$\{\gr_i(\alpha\I_{g,1})\}_{g\in \NN}$ gives rise to a rational $\SI_\QQ$--module.
\end{thm}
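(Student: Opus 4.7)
The plan is to mimic the proof of \autoref{thm:SIQ beta}, replacing the surjection $\mathcal L_i(H_1(\I_{?,1};\QQ)) \surject \gr_i(\gamma \I_{?,1})$ by the Johnson monomorphism
\[ \gr_i(\alpha\I_{g,1}) \inject \QQ^{2g} \otimes \mathcal L_{i+1}(\QQ^{2g}).\]
The target is built from the standard $\SI_\QQ$--module and the free Lie algebra functor applied to it, both of which are rational $\SI_\QQ$--modules, and tensor products of rational $\SI_\QQ$--modules remain rational $\SI_\QQ$--modules. So first I would verify that the Johnson homomorphism is natural with respect to the $\SI_\ZZ$--structure, i.e.\ that for each morphism $\Sigma_{g,1} \hookrightarrow \Sigma_{g',1}$ the induced square commutes. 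This is essentially built into the construction via the inclusion $\Mod(\Sigma_{g,1}) \inject \Aut(F_{2g})$, and reduces to the analogous naturality statement for $\gr_i(\alpha\IA_{2g}) \inject (\QQ^{2g})^* \otimes \mathcal L_{i+1}(\QQ^{2g})$ already used in \autoref{thm:alphaVICQ}.

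Second, I would upgrade the $\Sp_{2g}\ZZ$--representation on $\gr_i(\alpha\I_{g,1})$ to a rational $\Sp_{2g}\QQ$--representation. By \autoref{sec:res} (Borel density, \autoref{thm:Borel}), the restriction of each irreducible rational $\Sp_{2g}\QQ$--representation to $\Sp_{2g}\ZZ$ stays irreducible, and different irreducibles restrict to non-isomorphic $\Sp_{2g}\ZZ$--representations. Consequently the target decomposes canonically into irreducible rational $\Sp_{2g}\QQ$--representations with pairwise non-isomorphic $\Sp_{2g}\ZZ$--restrictions, and every $\Sp_{2g}\ZZ$--invariant subspace is uniquely the restriction of a rational $\Sp_{2g}\QQ$--subrepresentation. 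Applied to the image of $\gr_i(\alpha\I_{g,1})$ (which is $\Sp_{2g}\ZZ$--invariant by Johnson's equivariance), this endows $\gr_i(\alpha\I_{g,1})$ with a unique rational $\Sp_{2g}\QQ$--structure making the Johnson homomorphism $\Sp_{2g}\QQ$--equivariant; this is exactly \autoref{prop:GLQ alpha}'s symplectic analogue.

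Third, I would apply \autoref{prop:SImod} to lift the $\SI_\ZZ$--structure of \autoref{prop:SIZ alpha} to an $\SI_\QQ$--structure. What needs checking is that $\Sp_{2n-2m}\QQ$ acts trivially on the image of $\gr_i(\alpha\I_{m,1})$ in $\gr_i(\alpha\I_{n,1})$. By naturality the Johnson homomorphism embeds this image into $\QQ^{2m} \otimes \mathcal L_{i+1}(\QQ^{2m})$ sitting inside $\QQ^{2n} \otimes \mathcal L_{i+1}(\QQ^{2n})$, and the latter inclusion is a morphism of rational $\SI_\QQ$--modules, on whose image $\Sp_{2n-2m}\QQ$ does act trivially. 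Since the $\Sp_{2g}\QQ$--structure on $\gr_i(\alpha\I_{g,1})$ was pulled back from the target along an injection, the triviality statement transfers. Hence \autoref{prop:SImod} applies and produces the desired rational $\SI_\QQ$--module.

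The main obstacle, as in the proof of \autoref{thm:SIQ beta}, is not any representation-theoretic subtlety but checking functoriality of the Johnson homomorphism along the isometries $\Sigma_{g,1} \hookrightarrow \Sigma_{g',1}$ used to define the $\SI$--structure; once this is in place, the Borel-density step and \autoref{prop:SImod} do the rest mechanically.
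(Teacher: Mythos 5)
Your proposal is correct and follows essentially the same route as the paper, which also deduces the result from Satoh's $\Sp_{2g}\ZZ$--equivariant Johnson monomorphism $\gr_i(\alpha\I_{g,1}) \inject \QQ^{2g}\otimes\mathcal L_{i+1}(\QQ^{2g})$ together with the arguments of \autoref{thm:SIQ beta} (Borel density to extend to a rational $\Sp_{2g}\QQ$--structure, then \autoref{prop:SImod} to lift the $\SI_\ZZ$--structure of \autoref{prop:SIZ alpha}). Your explicit adaptation of the triviality-transfer step--pulling the $\Sp_{2n-2m}\QQ$--invariance back along the injection into the target rather than pushing it forward along a surjection from the source--is exactly the detail the paper leaves implicit.
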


\begin{proof}[Proof of \autoref{thmE}]
Let $V$ be the $\SI_\QQ$--module from \autoref{thm:SIQ alpha} such that $V_g = \gr_k(\alpha\I_{g,1})$.

Church and Putman \cite{CP} consider the groups $\I_{g,1}$ as a weak $\FI$-group. They apply their \cite[Thm G]{CP} to prove their \cite[Thm A]{CP}. In the proof of the former theorem in \cite[Claim 2]{CP} it is stated that $W(k)$ is boundedly generated. But $W(k)_g\otimes_\ZZ \QQ$ is $\gr_k(\alpha\I_{g,1}) = V_g$. Because every $V_g$ is finite dimensional (see \cite[Prop 4.4]{CP}), $V$ is a finitely generated $\FI$--module and thus certainly a finitely generated $\SI_\QQ$--module.


Our \autoref{thmA:SIrepstab} implies then that $V$ is uniformly representation stable. 
\end{proof}


\addcontentsline{toc}{section}{References}
\bibliographystyle{halpha}
\bibliography{repstab}

\def\cprime{$'$}
\begin{thebibliography}{RWW15}

\bibitem[And65]{An}
S.~Andreadakis.
\newblock On the automorphisms of free groups and free nilpotent groups.
\newblock {\em Proc. London Math. Soc. (3)}, 15:239--268, 1965.

\bibitem[Bor60]{Bo2}
Armand Borel.
\newblock Density properties for certain subgroups of semi-simple groups
  without compact components.
\newblock {\em Ann. of Math. (2)}, 72:179--188, 1960.

\bibitem[CEF15]{CEF}
Thomas Church, Jordan~S. Ellenberg, and Benson Farb.
\newblock F{I}-modules and stability for representations of symmetric groups.
\newblock {\em Duke Math. J.}, 164(9):1833--1910, 2015.

\bibitem[CF13]{CF}
Thomas Church and Benson Farb.
\newblock Representation theory and homological stability.
\newblock {\em Adv. Math.}, 245:250--314, 2013.

\bibitem[CP15]{CP}
Thomas Church and Andrew Putman.
\newblock Generating the {J}ohnson filtration.
\newblock {\em Geom. Topol.}, 19(4):2217--2255, 2015.

\bibitem[Dja16]{Dj}
Aur{{\'e}}lien Djament.
\newblock Des propri{\'e}t{\'e}s de finitude des foncteurs polynomiaux.
\newblock {\em Fund. Math.}, 233(3):197--256, 2016.

\bibitem[FH91]{FH}
William Fulton and Joe Harris.
\newblock {\em Representation theory}, volume 129 of {\em Graduate Texts in
  Mathematics}.
\newblock Springer-Verlag, New York, 1991.
\newblock A first course, Readings in Mathematics.

\bibitem[Ful97]{Youngtableaux}
William Fulton.
\newblock {\em Young tableaux}, volume~35 of {\em London Mathematical Society
  Student Texts}.
\newblock Cambridge University Press, Cambridge, 1997.
\newblock With applications to representation theory and geometry.

\bibitem[Gre07]{Green}
J.~A. Green.
\newblock {\em Polynomial representations of {${\rm GL}_{n}$}}, volume 830 of
  {\em Lecture Notes in Mathematics}.
\newblock Springer, Berlin, augmented edition, 2007.
\newblock With an appendix on Schensted correspondence and Littelmann paths by
  K. Erdmann, Green and M. Schocker.

\bibitem[GW09]{GW09}
Roe Goodman and Nolan~R. Wallach.
\newblock {\em Symmetry, representations, and invariants}, volume 255 of {\em
  Graduate Texts in Mathematics}.
\newblock Springer, Dordrecht, 2009.

\bibitem[GW16]{GW}
Wee~Liang Gan and John Watterlond.
\newblock Stable decompositions of certain representations of the finite
  general linear groups.
\newblock Preprint, 2016,
  \href{https://arxiv.org/abs/1605.08434}{arXiv:1605.08434v2}.

\bibitem[Hai97]{Hai}
Richard Hain.
\newblock Infinitesimal presentations of the {T}orelli groups.
\newblock {\em J. Amer. Math. Soc.}, 10(3):597--651, 1997.

\bibitem[H{\"o}n54]{Hoe}
Chaim~Samuel H{\"o}nig.
\newblock Proof of the well-ordering of cardinal numbers.
\newblock {\em Proc. Amer. Math. Soc.}, 5:312, 1954.

\bibitem[HS00]{HS}
Nathan Habegger and Christoph Sorger.
\newblock An infinitesimal presentation of the {T}orelli group of a surface
  with boundary.
\newblock Preprint, 2000,
  \href{http://www.math.sciences.univ-nantes.fr/~habegger/PS/inf180300.ps}{\nolinkurl{http://www.math.sciences.univ-nantes.fr/~habegger/PS/inf180300.ps}}.

\bibitem[HTW05]{HTW}
Roger Howe, Eng-Chye Tan, and Jeb~F. Willenbring.
\newblock Stable branching rules for classical symmetric pairs.
\newblock {\em Trans. Amer. Math. Soc.}, 357(4):1601--1626, 2005.

\bibitem[Jan87]{Jantzen}
Jens~Carsten Jantzen.
\newblock {\em Representations of algebraic groups}, volume 131 of {\em Pure
  and Applied Mathematics}.
\newblock Academic Press, Inc., Boston, MA, 1987.

\bibitem[Joh80]{Jo80}
Dennis Johnson.
\newblock An abelian quotient of the mapping class group {${\mathcal I}_{g}$}.
\newblock {\em Math. Ann.}, 249(3):225--242, 1980.

\bibitem[Joh85]{Jo}
Dennis Johnson.
\newblock The structure of the {T}orelli group. {III}. {T}he abelianization of
  {$\mathscr I$}.
\newblock {\em Topology}, 24(2):127--144, 1985.

\bibitem[Kaw06]{Kaw}
Nariya Kawazumi.
\newblock Cohomological aspects of magnus expansions.
\newblock Preprint, 2006,
  \href{http://arxiv.org/abs/math/0505497}{arXiv:math/0505497v3}.

\bibitem[Koi89]{Ko}
Kazuhiko Koike.
\newblock On the decomposition of tensor products of the representations of the
  classical groups: by means of the universal characters.
\newblock {\em Adv. Math.}, 74(1):57--86, 1989.

\bibitem[KT87]{KT}
Kazuhiko Koike and Itaru Terada.
\newblock Young-diagrammatic methods for the representation theory of the
  classical groups of type {$B_n,\;C_n,\;D_n$}.
\newblock {\em J. Algebra}, 107(2):466--511, 1987.

\bibitem[Laz54]{La}
Michel Lazard.
\newblock Sur les groupes nilpotents et les anneaux de {L}ie.
\newblock {\em Ann. Sci. Ecole Norm. Sup. (3)}, 71:101--190, 1954.

\bibitem[Nie18]{Ni}
J.~Nielsen.
\newblock \"{U}ber die {I}somorphismen unendlicher {G}ruppen ohne {R}elation.
\newblock {\em Math. Ann.}, 79(3):269--272, 1918.

\bibitem[PS14]{PS}
Andrew Putman and Steven~V Sam.
\newblock Representation stability and finite linear groups.
\newblock Preprint, to appear in {\it Duke Math. J.}, 2014,
  \href{http://arxiv.org/abs/1408.3694}{arXiv:1408.3694v2}.

\bibitem[RWW15]{RW}
Oscar Randal-Williams and Nathalie Wahl.
\newblock Homological stability for automorphism groups.
\newblock Preprint, 2015,
  \href{http://arxiv.org/abs/1409.3541}{arXiv:1409.3541v3}.

\bibitem[Sat12]{Sat}
Takao Satoh.
\newblock On the lower central series of the {IA}-automorphism group of a free
  group.
\newblock {\em J. Pure Appl. Algebra}, 216(3):709--717, 2012.

\bibitem[{Sat}16]{SatSurvey}
Takao {Satoh}.
\newblock {A survey of the Johnson homomorphisms of the automorphism groups of
  free groups and related topics.}
\newblock In {\em {Handbook of Teichm\"uller theory. Volume V}}, pages
  167--209. Z\"urich: European Mathematical Society (EMS), 2016.

\bibitem[SSW13]{SSW}
Steven~V. Sam, Andrew Snowden, and Jerzy Weyman.
\newblock Homology of {L}ittlewood complexes.
\newblock {\em Selecta Math. (N.S.)}, 19(3):655--698, 2013.

\bibitem[Wey39]{Weyl}
Hermann Weyl.
\newblock {\em The {C}lassical {G}roups. {T}heir {I}nvariants and
  {R}epresentations}.
\newblock Princeton University Press, Princeton, N.J., 1939.

\bibitem[Wil14]{Wi}
Jennifer C.~H. Wilson.
\newblock {$\rm FI_{\mathscr W}$}-modules and stability criteria for
  representations of classical {W}eyl groups.
\newblock {\em J. Algebra}, 420:269--332, 2014.

\end{thebibliography}

%
%
%
%
%
%
%
    
\end{document}